\theoremstyle{plain}
\newtheorem{thm}{Theorem}[section]
\newtheorem{lem}[thm]{Lemma}
\newtheorem{prop}[thm]{Proposition}
\newtheorem{cor}[thm]{Corollary}
\theoremstyle{definition}
\newtheorem{defn}[thm]{Definition}
\theoremstyle{remark}
\newtheorem{remark}[thm]{Remark}
\numberwithin{equation}{section}
\begin{document}

\title{Compact harmonic RCD$(K,N)$ spaces are harmonic manifolds}
\author{\it Zhangkai Huang \thanks{Sun Yat-sen University: huangzhk27@mail.sysu.edu.cn}}
\date{\small\today}
\maketitle

\begin{abstract}
{}{In this paper, we study harmonic 
RCD$(K,N)$ spaces as the counterpart of harmonic
Riemannian manifolds \textit{with Ricci curvature bounded from below}.} We prove that a compact RCD$(K,N)$ space is isometric to a smooth closed Riemannian manifold if it satisfies either of the following harmonicity conditions:
\begin{enumerate}
\item[(1)] the heat kernel $\rho(x,y,t)$ depends only on the variable $t$ and the distance between points $x$ and $y$.
\item[(2)] the volume of the intersection of two geodesic balls depends only on their radii  and the distance between their centers. 
\end{enumerate}

\end{abstract}

\tableofcontents

\section{Introduction}

In $n$-dimensional Euclidean space $\mathbb{R}^n$, there exist harmonic functions that depend only on the geodesic distance {}{from the origin}. For example when $n>2$, the function $f(x)=|x|^{2-n}$ is harmonic on $\mathbb{R}^n\setminus \{0\}$. In the 1930s, {}{Ruse \cite{R31}} pioneered the study of radial harmonic functions on Riemannian manifolds, aiming to generalize the Euclidean case to more geometric settings.

{}{A function $f$ on a Riemannian manifold $(M^n,\mathrm{g})$ is called \textit{radial} for a point $p$ on a subset $U\subset M^n$ if there exists $h:[0,\infty)\to\mathbb{R}$ such that $f=h(\mathsf{d}_\mathrm{g}(\cdot,p))$ on $U$. Such functions are natural generalizations of radial functions on $\mathbb{R}^n$, and play a central role in harmonic analysis.}

{}{Since radial harmonic functions exist only under very specific conditions, Ruse introduced a definition to characterize these special classes of manifolds, as detailed below.}

\begin{defn}
A Riemannian manifold $(M^n,\mathrm{g})$ is said to be \textit{harmonic} {}{if, for each point $p$, there exists a normal coordinate neighborhood $\{x^1,\ldots,x^n\}_p
$ centered at $p$ such that the density function $\theta_p:=\sqrt{|\det (\mathrm{g}_{ij})|}$ is radial for $p$ on this neighborhood.}
\end{defn}

{As shown in \cite{B78,DR92,W50}, harmonic manifolds admit multiple equivalent characterizations.}
\begin{thm}\label{thm1.2}
A complete $n$-dimensional Riemannian manifold $(M^n,\mathrm{g})$ is  harmonic if and only if either of the following conditions holds.
\begin{enumerate}
\item[$(1)$] For any point $p\in M^n$, {}{there exists $r>0$ such that the function $\Delta (\mathsf{d}_\mathrm{g}(p,\cdot))^2$ is radial for $p$ on $B_r(p)$.}
\item[$(2)$] For any $p\in M^n$ there exists a non-constant radial harmonic function {}{for $p$} in a punctured neighborhood of $p$.
\item[$(3)$] Every small geodesic sphere in $M^n$ has constant mean curvature.
\item[$(4)$] {Every harmonic function satisfies the mean value property}.
\end{enumerate}
\end{thm}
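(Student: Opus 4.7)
The plan is to reduce each of (1)--(4) to the radiality of the volume density $\theta_p(r,\xi)$ in polar normal coordinates centered at an arbitrary $p\in M^n$, normalized by $\theta_p(r,\xi)/r^{n-1}\to 1$ as $r\to 0^+$. All four equivalences will rest on the two basic identities
\begin{equation*}
\Delta u \;=\; \frac{1}{\theta_p}\,\partial_r\!\bigl(\theta_p\,\partial_r u\bigr) \;+\; (\text{tangential part}), \qquad \Delta d_p \;=\; \partial_r\log\theta_p(r,\xi),
\end{equation*}
on the fact that $\Delta d_p$ at $(r,\xi)$ coincides with the mean curvature of the geodesic sphere $S_r(p)$ at that point, and on the following transfer principle: whenever $\partial_r\log\theta_p$ is independent of $\xi$, integration in $r$ yields $\log\theta_p(r,\xi) = F(r) + G(\xi)$, and the boundary normalization at $r=0^+$ forces $G$ to be constant; hence $\theta_p$ is radial.

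First I will dispatch (3)$\Leftrightarrow$harmonic, (1)$\Leftrightarrow$harmonic and (2)$\Leftrightarrow$harmonic, which become essentially algebraic given the setup. Condition (3) says exactly that $\partial_r\log\theta_p$ depends only on $r$. Condition (1) says the same via $\Delta d_p^2 = 2|\nabla d_p|^2 + 2d_p\Delta d_p = 2 + 2r\,\partial_r\log\theta_p$. For (2), substituting a nonconstant radial $u(r)$ into the Laplacian formula gives $\partial_r(\theta_p u')=0$, so $\theta_p(r,\xi)u'(r)\equiv C(\xi)$; nonconstancy of $u$ rules out $C\equiv 0$, forcing $u'$ to be nowhere vanishing and then $\theta_p(r,\xi)/\theta_p(r_0,\xi) = u'(r_0)/u'(r)$ to depend only on $r$, after which the transfer principle gives radiality. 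The reverse implications are immediate in (1) and (3), and in (2) they follow by quadrature of the first-order ODE $v'(r)+(\theta_p'/\theta_p)v(r)=0$.

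The main obstacle is (4)$\Leftrightarrow$harmonic. For the forward direction I will differentiate the spherical mean $M_u(r) = |S_r(p)|^{-1}\int_{S_r(p)} u\,d\sigma$ and use the divergence theorem together with radiality of the mean curvature to conclude that $M_u'(r) = |S_r(p)|^{-1}\int_{S_r(p)} \partial_n u\,d\sigma = 0$ for every harmonic $u$, yielding $M_u(r)\equiv u(p)$. The converse, classical and essentially due to Willmore, is the delicate part: I will rewrite $M_u'(r)=0$ as
\begin{equation*}
\int_{S_r(p)} \bigl[H(r,\xi)-\bar H(r)\bigr]\,u\,d\sigma \;=\; 0 \quad\text{for every harmonic }u,
\end{equation*}
where $\bar H(r)$ is the average of the mean curvature on $S_r(p)$. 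Producing sufficiently many harmonic test functions via the Dirichlet problem against continuous boundary data on a small ball, and exploiting the density of the resulting traces on $S_r$, one concludes $H(r,\xi)\equiv \bar H(r)$, which is exactly condition (3), already shown to be equivalent to harmonicity. The principal technical step is this density of harmonic traces on spheres in a normal neighborhood, which is handled by standard elliptic regularity.
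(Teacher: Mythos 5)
The paper does not supply a proof of this theorem: it is quoted as a known, classical catalogue of equivalent characterisations and is cited to \cite{B78,DR92,W50} without any argument. Your proposal is therefore necessarily being compared against the standard textbook proofs rather than anything internal to the paper, and on that score it is essentially the right route. The identity $\Delta d_p=\partial_r\log\theta_p$, the observation that this equals the mean curvature of $S_r(p)$, the formula $\Delta d_p^2=2+2r\,\partial_r\log\theta_p$, the transfer principle converting radiality of $\partial_r\log\theta_p$ into radiality of $\theta_p$ via the normalisation $\theta_p/r^{n-1}\to1$, and the ODE analysis for (2) are all correct and exactly how (1), (2), (3) are handled classically. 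The forward direction of (4), differentiating the spherical mean and killing the curvature term when $H$ is radial, is also sound; you have implicitly used $\frac{d}{dr}\int_{S_r}u\,d\sigma=\int_{S_r}(\partial_r u+uH)\,d\sigma$ and the divergence theorem, which is fine.

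The one place where you are too casual is the converse of (4), specifically the claim that density of traces of harmonic functions on the interior sphere $S_r$ is ``handled by standard elliptic regularity.'' That is not a regularity statement at all: you need that the only continuous $f$ on $S_r$ with $\int_{S_r}fu\,d\sigma=0$ for all $u$ harmonic near $\bar B_r(p)$ is $f\equiv0$, and this is a Runge-type approximation/duality fact. The clean argument is potential-theoretic: with $G_R$ the Green's function of a slightly larger ball $B_R(p)$, set $w(y)=\int_{S_r}f(x)G_R(x,y)\,d\sigma(x)$; the orthogonality hypothesis makes $w$ vanish on $B_R\setminus\bar B_r$, continuity of the single-layer potential across $S_r$ plus the maximum principle force $w\equiv0$ on $B_r$, and the jump relation for the normal derivative then gives $f=0$. (One can instead invoke Lax–Malgrange approximation or, in the analytic category, unique continuation, but the layer-potential argument is the most elementary and does not need real-analyticity.) With that gap filled, your proof of (4)$\Rightarrow$(3)$\Rightarrow$harmonic goes through, and the whole theorem is established by your scheme. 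So: correct approach, classical in spirit, with one step mislabelled and underjustified but repairable.
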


When the space is simply connected, {}{additional equivalent characterizations hold}.

\begin{thm}[{}{{\cite[Thm. \!3]{CH11}}, {\cite[Thm.~1]{CH12}}}]\label{harm1}
{}{A simply connected and complete} Riemannian manifold is harmonic if and only if the volume of the intersection of two geodesic balls depends only on their radii  and the distance between their centers.
\end{thm}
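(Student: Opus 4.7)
The plan is to derive both implications from the characterizations of harmonicity listed in the preceding theorem, particularly condition (3): every small geodesic sphere has constant mean curvature.

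\textbf{Sufficiency.} Suppose $V(r_1,r_2;p,q):=\mathrm{vol}(B_{r_1}(p)\cap B_{r_2}(q))$ depends only on $(r_1,r_2,d(p,q))$. Fix $p\in M$, a sufficiently small $r>0$ so that every point of $\partial B_r(p)$ is non-conjugate to $p$, and pick any $q\in\partial B_r(p)$. Setting $r_1=r$ and letting $\epsilon\to 0$, a standard expansion in a normal coordinate chart centered at $q$ yields
\[
V(r,\epsilon;p,q)=\tfrac{1}{2}\omega_n\epsilon^n-c_n\,H_{r,p}(q)\,\epsilon^{n+1}+O(\epsilon^{n+2}),
\]
where $H_{r,p}(q)$ is the mean curvature of $\partial B_r(p)$ at $q$ with respect to the outward normal $\nabla d_p$, and $c_n>0$ is a dimensional constant. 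Since $d(p,q)=r$, the hypothesis forces the left-hand side to depend only on $(r,\epsilon)$, so $H_{r,p}(q)$ is independent of the choice of $q\in\partial B_r(p)$. Hence every small geodesic sphere has constant mean curvature, and condition (3) of the preceding theorem yields harmonicity.

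\textbf{Necessity.} Assume $M$ is harmonic, so in normal coordinates at every point the volume density is the same radial function $\theta(\rho)$. By the coarea formula applied in geodesic polar coordinates at $q$,
\[
V(r_1,r_2;p,q)=\int_0^{r_2}\theta(t)\,\mu_{p,q}(t,r_1)\,dt,\qquad \mu_{p,q}(t,r_1):=\sigma\bigl(\{v\in S_qM:\ d(p,\exp_q tv)<r_1\}\bigr).
\]
It therefore suffices to prove that $\mu_{p,q}(t,r_1)$ depends only on $(t,r_1,s)$ with $s=d(p,q)$. Given two pairs $(p,q)$ and $(p',q')$ with $d(p,q)=d(p',q')=s$, the plan is to construct, for each $t$ in the regular range, a measure-preserving bijection between $\{v\in S_qM:\ d(p,\exp_q tv)<r_1\}$ and its primed counterpart. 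The building blocks are: radiality of the density along every geodesic from $q$, radiality of $\Delta d_p^2$ around $p$, and the common Jacobi-field ODE governing geodesic variations in a harmonic manifold. Simple connectedness and completeness are invoked to eliminate holonomy ambiguities and to ensure that cut-locus structures match across pairs at equal distance, completing the construction.

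\textbf{Main obstacle.} The sufficiency direction is essentially local: one asymptotic computation plus invocation of the mean-curvature characterization. The necessity direction is the substantial part. Radiality of the density at a single base point is \emph{not} enough to control the joint distance function $x\mapsto(d(p,x),d(q,x))$, and harmonic manifolds need not be two-point homogeneous beyond the rank-one symmetric family. The hard step is therefore the construction of the spherical-measure correspondence for pairs at equal distance together with a rigorous extension beyond the injectivity radius, which is where the technical content of the Csikós--Horváth arguments in~\cite{CH11,CH12} lies.
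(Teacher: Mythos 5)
The statement you are proving is Theorem \ref{harm1}, which the paper \emph{cites} from Csik\'os--Horv\'ath \cite{CH11,CH12} and does not prove; so there is no internal proof of this result to compare against. What can be compared is (a) your argument against the standard route implicit in the paper's own setup, and (b) the soundness of your sketch.

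Your sufficiency direction is correct and is essentially the same asymptotic mean-curvature argument the paper itself replays in the RCD setting (Proposition \ref{step5} and the surrounding lemmas, which lean on \cite{HT03} for exactly the expansion you invoke). That part is sound.

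The necessity direction, however, is a genuine gap, and you acknowledge it yourself. The plan of building a measure-preserving bijection between level sets of the joint distance function $\bigl(d(p,\cdot),d(q,\cdot)\bigr)$ for pairs $(p,q)$ and $(p',q')$ at equal distance is precisely what fails to exist in general: harmonic manifolds beyond the rank-one symmetric spaces (Damek--Ricci spaces) are \emph{not} two-point homogeneous, so there is no isometry carrying $(p,q)$ to $(p',q')$, and the spherical-measure correspondence you want to build pointwise has no geometric reason to exist. Radiality of the volume density along geodesics from a single base point controls one distance function, not the pair. This is not a technical loose end; it is the crux, and your sketch gives no mechanism to resolve it.

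The route the paper's own preliminaries suggest is cleaner and avoids this trap: by Remark \ref{rmk4.8aaa}, the ball-intersection condition is equivalent (approximating by characteristic functions and taking linear combinations) to the condition that convolutions of radial kernels are radial; and Theorem \ref{harm3} (Szab\'o) says that condition is equivalent to harmonicity. On a simply connected complete harmonic manifold, radiality of the heat kernel (Theorem \ref{harm2}) plus the structure theory of radial distributions gives the convolution statement globally, and specializing to indicator functions of balls gives the necessity direction of Theorem \ref{harm1} without ever constructing a pointwise correspondence. If you want a self-contained proof, replacing your necessity sketch with the convolution reduction is the way to close the gap; the bijection idea, as stated, cannot be made to work.
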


\begin{thm}[{\cite[Thm.~1.1]{S90}}]\label{harm2}
{}{A simply connected and complete} Riemannian manifold $(M^n,\mathrm{g})$ is harmonic if and only if the heat kernel $\rho(x,y,t)$ depends only on the variable $t$ and the distance between points $x$ and $y${}{; that is,} it is of the form $\rho(x,y,t) = \rho(\mathsf{d}_{\mathrm{g}}(x,y),t)$.
\end{thm}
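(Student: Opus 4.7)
The plan is to exploit the Minakshisundaram--Pleijel small-time asymptotic expansion of the heat kernel on a complete Riemannian manifold: for $y$ in a normal neighborhood of $x$ and $t \to 0^+$,
\[
\rho(x,y,t) \;\sim\; (4\pi t)^{-n/2}\, e^{-\mathsf{d}_{\mathrm{g}}(x,y)^2/(4t)} \sum_{k=0}^{\infty} u_k(x,y)\, t^{k},
\]
with leading coefficient $u_0(x,y)=\theta_x(y)^{-1/2}$ and subsequent $u_k$ determined by transport equations along the minimizing geodesic from $x$ to $y$. Both implications of the theorem are extracted by matching this structure against the radial hypothesis on $\rho$.

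For the direction ``harmonic implies radial heat kernel'', harmonicity gives that $\theta_x$ is radial at every $x$, so $u_0(x,\cdot)$ is radial. A straightforward induction using the radial ODE structure of the transport equations in normal coordinates shows that each $u_k(x,\cdot)$ is radial as well. Combined with the explicit radial prefactor, the whole asymptotic expansion is radial, and radiality of $\rho(x,\cdot,t)$ for small $t$ is upgraded to all $t>0$ via the semigroup identity together with uniqueness of the heat kernel.

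The heart of the theorem is the converse. Fix $p \in M^n$ and assume $\rho(x,y,t)=\rho(\mathsf{d}_{\mathrm{g}}(x,y),t)$. Multiplying the expansion by $(4\pi t)^{n/2} e^{\mathsf{d}_{\mathrm{g}}(p,y)^2/(4t)}$ yields an asymptotic series in $t$ whose left-hand side, by hypothesis, depends on $y$ only through $\mathsf{d}_{\mathrm{g}}(p,y)$. Passing to the $t \to 0^+$ limit extracts
\[
u_0(p,y) \;=\; \theta_p(y)^{-1/2}
\]
as a function of $\mathsf{d}_{\mathrm{g}}(p,y)$ alone. Since $p$ was arbitrary, each volume density $\theta_p$ is radial, hence $(M^n,\mathrm{g})$ is harmonic.

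The main obstacle I anticipate is rigorously extracting an individual coefficient $u_0$ from a purely asymptotic, non-convergent expansion, given only a radial hypothesis on $\rho$ itself. I would handle this via the uniform-on-compact-sets remainder estimates in the Minakshisundaram--Pleijel construction, which justify taking the $t \to 0^+$ limit term by term after peeling off the explicit Gaussian-and-power prefactor; successive differentiation in $t$ yields the higher $u_k$ if ever needed. The simply connected hypothesis enters to convert local radiality of each $\theta_p$ on a normal neighborhood into the global harmonicity statement without interference from nontrivial covering structures, and plays an analogous role when globalizing the small-time identity in the forward direction.
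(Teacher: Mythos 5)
The paper does not prove this theorem itself; it cites Szab\'o's result, so I am assessing your argument on its own terms rather than against an in-paper proof.

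Your converse direction (radial heat kernel $\Rightarrow$ harmonic) is sound and is the standard argument: peel off $(4\pi t)^{n/2}e^{\mathsf{d}^2/(4t)}$ using the Minakshisundaram--Pleijel parametrix, use the uniform-on-compacts remainder estimate on a fixed normal ball, and read off $u_0(p,\cdot)=\theta_p(\cdot)^{-1/2}$ as a radial function. Note that simple connectedness is not needed in this direction, since harmonicity is a local condition on the density.

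Your forward direction has a genuine gap. From ``each $u_k(x,\cdot)$ is radial'' you conclude that the full asymptotic expansion is radial, and then assert that $\rho(x,\cdot,t)$ is radial for small $t$ and upgrade to all $t$. But an asymptotic series at $t=0^+$ does not determine the function: two kernels may share every Minakshisundaram--Pleijel coefficient while differing by a term vanishing to infinite order in $t$ (of size $e^{-c/t}$, say), and nothing in your argument excludes such a non-radial discrepancy. Appealing to the semigroup property and uniqueness cannot close this, because you never exhibit an actual radial solution of the heat equation to which uniqueness could be applied --- a parametrix is not a solution. The correct mechanism avoids asymptotics entirely: harmonicity makes $\theta_p$ radial, so for a radial $f$ one has $\Delta\bigl(f\circ \mathsf{d}_p\bigr)=\bigl(f''+(\log\theta)'f'\bigr)\circ \mathsf{d}_p$, hence the spherical averaging operator $A_p:f\mapsto\fint_{\partial B_r(p)}f$ commutes with $\Delta$ and therefore with $e^{t\Delta}$; applying $A_p$ to $\rho(p,\cdot,t)=e^{t\Delta}\delta_p$ shows the heat kernel equals its own spherical average, i.e.\ is radial. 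This is exactly the mechanism the present paper adapts to the RCD setting via the operators $R_xA_x$ in Section~4. Simple connectedness enters precisely here, through Allamigeon--Warner, to guarantee the global structure (no conjugate points, or Blaschke) under which $\theta$ and the geodesic spheres are globally well-behaved --- not in ``globalizing a small-time identity.''
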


Given a Riemannian manifold $(M^n,\mathrm{g})$, let $H_i$ ($i=1,2$) be two functions on $M^n\times M^n$ such that the functions $H_i(x,\cdot)$ ($i=1,2$) are $L^2$-integrable for every $x\in M^n$. Then the convolution of $H_1$ and $H_2$, denoted as $H_1 \ast H_2$, is defined by
\[
H_1 \ast H_2(x, y) =\int_{M^n} H_1(x,z)H_2(y,z) \,\mathrm{d}\mathrm{vol}_{\mathrm{g}}(z).
\]
A function $H:M^n\times M^n\rightarrow \mathbb{R}$ is called radial kernel if $H(x, y)$ depends only on the geodesic distance between $x$ and $y$, that is, if $H =h\circ \mathsf{d}_{\mathrm{g}}$ for some $h:\mathbb{R}^+\rightarrow \mathbb{R}$.

 As an application of Thm. \ref{harm2}, through the approximation with characteristic functions, we immediately deduce the following theorem.

\begin{thm}[{\cite[Prop.~2.1]{S90}}]\label{harm3}
A simply connected and complete Riemannian manifold is harmonic if and only if the convolution
of radial kernel functions $H_1=h_1\circ \mathsf{d}_{\mathrm{g}}$ and $H_2=h_2\circ \mathsf{d}_{\mathrm{g}}$ {}{remains radial kernel, for any two compactly supported $L^2$-integrable functions $h_1,h_2$ on $\mathbb{R}$}.
\end{thm}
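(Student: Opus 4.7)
The plan is to reduce both implications of the theorem to Theorem \ref{harm1}, using approximation by characteristic functions of balls. For the backward direction, I would take $h_i = \chi_{[0, r_i]}$ for arbitrary $r_1, r_2 > 0$, which are $L^2$ with compact support. The associated radial kernels are $H_i(x, z) = \chi_{B_{r_i}(x)}(z)$, and
\[
(H_1 \ast H_2)(x, y) = \mathrm{vol}_{\mathrm{g}}\bigl(B_{r_1}(x) \cap B_{r_2}(y)\bigr).
\]
The hypothesis says this is a radial kernel, so for each fixed pair $(r_1, r_2)$ the intersection volume depends only on $\mathsf{d}_{\mathrm{g}}(x, y)$; Theorem \ref{harm1} then concludes that $M^n$ is harmonic.

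For the forward direction, I would first treat the case where $h_1$ and $h_2$ are simple, i.e., finite $\mathbb{R}$-linear combinations of characteristic functions of intervals $[a, b] \subset [0, R]$. Writing $\chi_{[a, b]} = \chi_{[0, b]} - \chi_{[0, a]}$ and using bilinearity of the convolution, $(H_1 \ast H_2)(x, y)$ becomes a finite linear combination of ball-intersection volumes $\mathrm{vol}_{\mathrm{g}}(B_{r_1}(x) \cap B_{r_2}(y))$. By Theorem \ref{harm1}, each of these depends only on $\mathsf{d}_{\mathrm{g}}(x, y)$, so $H_1 \ast H_2$ is radial in this case.

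For general $h_1, h_2 \in L^2(\mathbb{R})$ compactly supported in $[0, R]$, I would approximate each by simple functions $h_i^{(n)} \to h_i$ in $L^2$. Using geodesic polar coordinates at a fixed point $x$ (valid up to the measure-zero cut locus), I obtain the bound
\[
\|h \circ \mathsf{d}_{\mathrm{g}}(x, \cdot)\|_{L^2(M^n)}^2 = \int_0^R |h(r)|^2\, A_x(r)\, \mathrm{d}r \leq C(x, R)\, \|h\|_{L^2([0, R])}^2,
\]
since the geodesic sphere area $A_x(r)$ is locally bounded on a complete Riemannian manifold. This gives $H_i^{(n),x} \to H_i^x$ in $L^2(M^n)$, and Cauchy--Schwarz then yields pointwise convergence $(H_1^{(n)} \ast H_2^{(n)})(x, y) \to (H_1 \ast H_2)(x, y)$ for each pair $(x, y)$. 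Since each approximant $(H_1^{(n)} \ast H_2^{(n)})$ is radial by the simple-function case, comparing the values at pairs $(x, y)$ and $(x', y')$ with $\mathsf{d}_{\mathrm{g}}(x, y) = \mathsf{d}_{\mathrm{g}}(x', y')$ and passing to the limit preserves equality, so $H_1 \ast H_2$ itself is radial. There is no serious conceptual obstacle here; the only step that requires genuine care is the $L^2$-control of $h \circ \mathsf{d}_{\mathrm{g}}(x, \cdot)$ in terms of $\|h\|_{L^2}$, which justifies interchanging the approximation limit with the pointwise evaluation of the convolution — but since no uniformity in $x, y$ is needed, this is ultimately routine bookkeeping.
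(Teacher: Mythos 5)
Your proof is correct, and it is essentially the approach the paper has in mind: the paper does not reprove Theorem~\ref{harm3} (it is cited from [S90]), but the introductory remark that it follows ``through the approximation with characteristic functions'' and, more tellingly, the paper's own Remark~\ref{rmk4.8aaa} --- which states precisely the equivalence between the ball-intersection condition and the radial-convolution condition in the RCD setting by the same approximation --- show that the intended route is the reduction to Theorem~\ref{harm1} via $h_i=\chi_{[0,r_i]}$ and simple-function approximation, exactly as you do. (The paper's sentence attributes the deduction to Theorem~\ref{harm2} rather than Theorem~\ref{harm1}; this appears to be a slip, since the heat-kernel characterization plays no role in the characteristic-function argument.) One small point where your write-up glosses over a step: the bound $\int_0^R|h(r)|^2A_x(r)\,\mathrm{d}r\leqslant C(x,R)\|h\|_{L^2}^2$ needs $A_x(r)=\mathscr{H}^{n-1}(\partial B_r(x))$ to be \emph{bounded}, not merely $L^1$, on $[0,R]$; the coarea formula only gives integrability. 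Boundedness does hold, e.g.\ by Bishop--Gromov with the Ricci lower bound valid on the compact set $\overline{B}_R(x)$, and in your forward direction the manifold is already harmonic so $A_x(r)$ is radial and smooth in $r$, making this automatic --- but ``locally bounded on a complete Riemannian manifold'' is the one place where an extra word of justification would be warranted. The rest (bilinearity on simple functions, Cauchy--Schwarz for the passage to the limit, and preservation of the pointwise identity $H_1\ast H_2(x,y)=H_1\ast H_2(x',y')$ under limits) is correct and, as you say, routine.
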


The primary objective of this paper is to characterize non-smooth contexts, namely $\mathrm{RCD}(K,N)$ metric measure spaces {}{that} are defined precisely in the following subsection and satisfy the conditions outlined in Thm. \ref{harm1}-\ref{harm3}.
\subsection{RCD spaces}

In this paper, by metric measure space, we mean a triple $(X, \mathsf{d},\mathfrak{m})$ such that  $(X, \mathsf{d})$ is a complete and  separable metric space and that $\mathfrak{m}$ is a non-negative Borel measure with full support on
$X$ and being finite on any bounded subset of $X$.

The curvature-dimension condition CD$(K,N)$, which involves a lower Ricci curvature bound $K\in\mathbb{R}$ and an upper dimension bound $N\in [1,\infty)$ for metric measure spaces in a synthetic sense, was first introduced in \cite{St06a,St06b} by Sturm and in \cite{LV09} by Lott-Villani respectively. Later, by adding a Riemannian structure into CD$(K,N)$ metric measure spaces, Ambrosio-Gigli-Savar\'{e} \cite{AGS14a}, Gigli \cite{G13,G15}, Erbar-Kuwada-Sturm \cite{EKS15} and
Ambrosio-Mondino-Savar\'{e} \cite{AMS19} introduced the notion of RCD$(K, N)$ spaces.  To be precise, an RCD$(K,N)$ space is a CD$(K,N)$ metric measure space with a Hilbertian $H^{1,2}$ Sobolev space.

{}{Although RCD$(K,N)$ spaces may lack smoothness, they preserve key analytical properties analogous to those in Riemannian manifolds with a lower Ricci bound $K$ and an upper dimension bound $N$.} {}{A cornerstone of the analysis is the heat kernel: Sturm \cite{St95,St96} and Jiang-Li-Zhang \cite{JLZ16}, established Gaussian estimates ensuring that the heat kernel admits a positive locally Lipschitz  continuous representation.} {Moreover, Bru\'e-Semola  \cite{BS20} revealed a deep structural property--every RCD$(K,N)$ space $({X},\mathsf{d},\mathfrak{m})$ bears a unique essential dimension $n\in [1,N]\cap\mathbb{N}$, denoted by $n:=\mathrm{dim}_{\mathsf{d},\mathfrak{m}}({X})$, such that the tangent space {}{at} $\mathfrak{m}$-almost every point is $\mathbb{R}^n$.}

{A pivotal advancement came with De Philippis-Gigli’s \cite{DG18} introduction of non-collapsed RCD$(K,N)$ spaces, which generalized non-collapsed limits of Riemannian manifolds with a lower Ricci curvature bound and a constant dimension. Here, the reference measure $\mathfrak{m}$ coincides with the $N$-dimensional Hausdorff measure $\mathscr{H}^N$, forcing $N$ to agree with the essential dimension of the space.}

{}{To further connect synthetic notions with classical regularity, recent research has focused on identifying conditions that enhance the regularity of RCD$(K,N)$ spaces. De Philippis-Gigli \cite{DG18} initiated this direction by introducing the concept of weakly non-collapsed RCD$(K,N)$ spaces, where $\mathfrak{m}$ is absolutely continuous with respect to $\mathscr{H}^N$. They conjectured that such spaces are equivalent to non-collapsed RCD$(K,N)$ spaces. This conjecture was later resolved affirmatively by Honda \cite{Hon20} and Brena-Gigli-Honda-Zhu \cite{BGHZ23}.} {In a parallel development, the author \cite{H23} proved that RCD$(K,N)$ spaces become non-collapsed under an isometrically heat kernel immersing condition. Moreover, for compact non-collapsed RCD$(K,n)$ spaces admitting eigenfunction-induced isometric immersions into Euclidean space, it was shown that they must be smooth.}

\subsection{Contributions}

In this paper, we focus on the following natural subclasses of RCD$(K,N)$ spaces, {}{motivated by the classical harmonic manifold theory.}

\begin{defn}[Strongly harmonic RCD$(K,N)$ space]
An RCD$(K,N)$ space $(X,\mathsf{d},\mathfrak{m})$ is said to be \textit{strongly harmonic} if its heat kernel $\rho$ depends only on distance and time. That is, there exists a function such that
{}{$\rho(x,y,t)=H(\mathsf{d}\left(x,y\right),t)$ for all $x,y\in X$ and $t>0$.}
\end{defn}

\begin{defn}[{}{Radially symmetric} RCD$(K,N)$ space]\label{defn1.7}
An RCD$(K,N)$ space $(X,\mathsf{d},\mathfrak{m})$ is said to be \textit{radially symmetric} if there exist non-constant eigenfunctions $\{\phi_i\}_{i=1}^m$ and a function $F:[0,\infty)\times [0,\infty)\rightarrow \mathbb{R}$ such that
\begin{equation}\label{eqn1.2}
\sum\limits_{i=1}^m \phi_i(x)\phi_i(y)=F(\mathsf{d}\left(x,y\right)),\ \forall x,y\in X.
\end{equation}
\end{defn}

The following theorems establish a hierarchy of regularity results for harmonic RCD$(K,N)$ spaces, culminating in smoothness under a volume homogeneity assumption.

\begin{thm}[Measure structure of strongly harmonic spaces]\label{mainthm2}
Let $(X,\mathsf{d},\mathfrak{m})$ be a strongly harmonic $\mathrm{RCD}(K,N)$ space. Then:
\begin{itemize}
\item[$(1)$] The measure $\mathfrak{m}$ is a constant multiple of the Hausdorff measure, i.e., $\mathfrak{m}=c\mathscr{H}^n$ for $c>0$, where $n=\mathrm{dim}_{\mathsf{d},\mathfrak{m}}(X)$. 
\item[$(2)$] {}{$(X,\mathsf{d},\mathscr{H}^n)$ is a non-collapsed $\mathrm{RCD}(K,n)$ space.}
\end{itemize}
\end{thm}
{}{This shows that strong harmonicity not only forces the measure to align with the Hausdorff measure (whose dimension equals the essential dimension), but also sharpens the RCD$(K,N)$ condition to the optimal RCD$(K,n)$ condition. These properties significantly simplify the analysis of such spaces.}

\begin{thm}[Smoothness of radially symmetric spaces]\label{mainthm1}
For a non-collapsed radially symmetric $\mathrm{RCD}(K,n)$ space $(X,\mathsf{d},\mathscr{H}^n)$, the metric space $(X,\mathsf{d})$ is isometric to an $n$-dimensional smooth closed Riemannian manifold $(M^n,\mathrm{g})$.
\end{thm}

{}{The radial symmetry condition (Defn. \ref{defn1.7}) thus provides a bridge between synthetic and classical geometry. Combining Thms. \ref{mainthm2} and \ref{mainthm1} yields:}

\begin{cor}\label{cor1.5}
Any compact strongly harmonic $\mathrm{RCD}(K,N)$ space $(X,\mathsf{d},\mathfrak{m})$ with $\mathrm{dim}_{\mathsf{d},\mathfrak{m}}(X)=n$ is isometric to an $n$-dimensional smooth closed Riemannian manifold.
\end{cor}

As a consequence of Cor. \ref{cor1.5}, we generalize Thm. \ref{harm1} to the 
RCD setting:

\begin{thm}[Geometric characterization of harmonicity]\label{thm1.11}
Let $(X,\mathsf{d},\mathfrak{m})$ be a compact $\mathrm{RCD}(K,N)$ space with $n=\mathrm{dim}_{\mathsf{d},\mathfrak{m}}(X)$. If the volume of the intersection of any two geodesic balls depends only on their radii and the distance between their centers, then:
\begin{enumerate}
\item[$(1)$] $\mathfrak{m}=c\mathscr{H}^n$ and $(X,\mathsf{d},\mathscr{H}^n)$ is non-collapsed.
\item[$(2)$] If $(X,\mathsf{d})$ is compact, then it is isometric to an $n$-dimensional smooth closed Riemannian manifold $(M^n,\mathrm{g})$.
\item[$(3)$] {}{If $(X,\mathsf{d})$ is compact and simply-connected, then it is a harmonic manifold in the classical sense. }
\end{enumerate}

\end{thm}
This fully answers the synthetic analogue of the classical question: when does volume homogeneity imply smoothness?
 \subsection{Outline of the proofs}
In Section \ref{sec3}, we begin by examining the smoothness of compact, non-collapsed, radially symmetric RCD$(K,n)$ spaces $(X,\mathsf{d},\mathscr{H}^n)$. Our main goal is to establish that the map $(\phi_1,\ldots,\phi_m):X\rightarrow \mathbb{R}^m$ constructed from eigenfunctions, meeting (\ref{eqn1.2}), realizes an isometric immersion. The smoothness then follows directly from \cite[Thm.~1.10]{H23}. Regarding the proof for the regularity of strongly harmonic RCD$(K,N)$ spaces $(X,\mathsf{d},\mathfrak{m})$, we employ a blow-up argument to demonstrate their non-collapsing nature and to refine the upper bound $N$ to its essential dimension $n:=\mathrm{dim}_{\mathsf{d},\mathfrak{m}}({X})$. This allows us to conclude the smoothness of compact, strongly harmonic RCD$(K,N)$ spaces by observing their radial symmetry.

In Section \ref{sec4}, we investigate the regularity of RCD$(K,N)$ spaces $(X,\mathsf{d},\mathfrak{m})$ where the volume of the intersection of two geodesic balls depends only on their radii and the distance between their centers. Using a blow-up argument again, we show that such spaces are non-collapsed and the dimension upper bound $N$ can be reduced to the essential dimension $n:=\mathrm{dim}_{\mathsf{d},\mathfrak{m}}({X})$. Furthermore, under the additional assumption of compactness for $X$, we analyze the smoothness of the space. A key observation is that any geodesic can be extended to a length equal to the diameter of $X$. This property enables us to construct bi-Lipschitz coordinate charts using distance functions, analogous to the bi-Lipschitz coordinates near regular points in Alexandrov spaces (see for instance \cite{BGP92}). By mapping back to Euclidean space, we derive the following pivotal equation for any two distinct points $x,\bar{x} \in X$.
\[
\lim_{r\rightarrow 0} \,\frac{1}{r}\left(\frac{\mathscr{H}^n\left(B_r(x)\setminus B_{\mathsf{d}(x,\bar{x})}(\bar{x})\right)}{\omega_n r^n}-\frac{1}{2}\right)=\Delta \mathsf{d}_{\bar{x}}(x).
\]
As shown in \cite{HT03}, in a smooth setting, the {}{left hand side of} this equation corresponds to the mean curvature of the geodesic sphere of radius $\mathsf{d}(x,\bar{x})$ centered at $\bar{x}$. Finally, the volume density function is found to be constant on geodesic spheres (i.e., radial with respect to the center), which implies that the space is strongly harmonic and, consequently, smooth.
\ 
\ 
\ 

\textbf{Acknowledgements} The author acknowledges the support of Fundamental Research Funds for the Central Universities, Sun Yat-sen University, Grant Number 24qnpy105. The author is indebted to Prof. Shouhei Honda and Prof. Huichun Zhang for their helpful discussions and insights. He also thanks Yuanlin Peng for his comments on this work.
\section{Preliminaries}

Throughout this paper, we use the following notation: 
\begin{itemize}
\item $C=C(k_1,\ldots,k_m)$ denotes a positive constant that depends only on the parameters $k_1,\ldots,k_m$.
\item $\Psi=\Psi(k_1,\ldots,k_m|l_1,\ldots,l_n)$ denotes a positive constant that depends only on $k_1,\ldots,k_m$, $l_1,\ldots,l_n$ with the property that $\Psi\to 0$ as $k_1,\ldots,k_m\to 0$ for any fixed $l_1,\ldots,l_n$.
\end{itemize}
For brevity, we may sometimes write $C$ or $\Psi$ without explicitly listing the parameters, noting that these constants may vary from line to line when no confusion arises. Now let $(X, \mathsf{d})$ be {a complete and separable metric space.}

\begin{itemize}
\item We denote by $C(X)$ and $\mathrm{Lip}(X,\mathsf{d})$ the set of continuous function and Lipschitz functions on $(X,\mathsf{d})$, respectively. The subset $\mathrm{Lip}_c(X,\mathsf{d})$ denotes compactly supported Lipschitz functions.
\item For any $f\in \mathrm{Lip}(X,\mathsf{d})$, the Lipschitz constant of $f$ is defined by
\[
\mathrm{Lip}\mathop{f}:=\sup_{x\neq y} \frac{|f(y)-f(x)|}{\mathsf{d}\left(y,x\right)};
\]
the pointwise Lipschitz constant of $f$ at $x\in X$ is defined by
\[
\mathrm{lip}\mathop{f}(x):=\limsup_{y\rightarrow x} \frac{|f(y)-f(x)|}{\mathsf{d}\left(y,x\right)}{}{\,(\text{or 0 if $x$ is isolated})}.
\]

\item We denote by $B_r^X(x)$ (or briefly $B_r({}{x})$) the open ball of radius $r>0$ centered at $x$, i.e. $\{y\in X:\mathsf{d}\left(x,y\right)<r\}$ and by $A_{r_1,r_2}^X(x)$ (or briefly $A_{r_1,r_2}(x)$) the annulus $\{y\in X:r_1<\mathsf{d}\left(x,y\right)<r_2\}$. The closure of $B_r^X(x)$ is denoted by $\bar{B}_r^X$ (or briefly $\bar{B}_r(x)$).
\item For any $n\in [1,\infty)$, we define $\mathscr{H}^n$ as the $n$-dimensional Hausdorff measure on $(X,\mathsf{d})$. Specifically, when $n\in\mathbb{N}$, we denote by $\mathscr{L}^n$ the standard Lebesgue measure on $\mathbb{R}^n$, which is consistent with the $n$-dimensional Hausdorff measure. Additionally, we let $\omega_n$ {}{denote the} Lebesgue measure of the unit ball $B_1(0_n)$ in $\mathbb{R}^n$. 
\item If $(X,\mathsf{d})$ is compact, then we define its diameter as:
\[
\mathrm{diam}(X,\mathsf{d}):=\sup_{x,y\in X}\mathsf{d}\left(x,y\right).
\]
\end{itemize}
\subsection{RCD space and heat kernel}


Let $(X,\mathsf{d},\mathfrak{m})$ be a metric measure space. 

The Cheeger energy $\mathrm{Ch}:L^2(\mathfrak{m})\rightarrow [0,\infty]$ is a convex, lower semi-continuous functional defined as
\[
\mathrm{Ch}(f):=\inf_{\{f_i\}}\left\{\int_X \left(\mathrm{lip}\mathop{f_i}\right)^2\mathrm{d}\mathfrak{m}\right\},
\]
where the infimum is taken among all sequences $\{f_i\}\subset \mathrm{Lip}(X,\mathsf{d})\cap L^2(\mathfrak{m})$ such that $\|f_i-f\|_{L^2(\mathfrak{m})}\rightarrow 0$. The Sobolev space $H^{1,2}(X,\mathsf{d},\mathfrak{m})$ is then defined as the set of $L^2$-integrable functions with finite Cheeger energy. We also define $H^{1,2}_{\text{loc}}(X,\mathsf{d},\mathfrak{m})$ as the space of functions belonging to $H^{1,2}(U,\mathsf{d},\mathfrak{m})$ for every bounded open $U \subset X$ and $L^p_{\text{loc}}(X,\mathfrak{m})$ analogously for $L^p$ spaces. When no confusion may arise, we use the abbreviations: $L^p:=L^p(X,\mathfrak{m})$ and $H^{1,2}:=H^{1,2}(X,\mathsf{d},\mathfrak{m})$.

For any $f\in H^{1,2}$, {}{Mazur’s Lem. yields a unique minimal relaxed slope $|\nabla f|\in L^2$ such that}
\[
\mathrm{Ch}(f)=\int_X {|\nabla f|}^2\, \mathrm{d}\mathfrak{m}.
\]
{}{This minimal relaxed slope satisfies locality, i.e.} $|\nabla f|=|\nabla h|$ $\mathfrak{m}$-a.e. on $\{f=h\}$. 

{}{A metric measure space is said to be \textit{infinitesimally Hilbertian} if the associated Sobolev space is a Hilbert spaces.} It is noteworthy that, as demonstrated in \cite{AGS14a,G15}, under this condition, for any two functions $f,h\in H^{1,2}$, the following function is well-defined:
\[
\langle\nabla f,\nabla h\rangle:=\lim_{\varepsilon\rightarrow 0}\frac{{|\nabla f+\varepsilon h|}^2-{|\nabla h|}^2}{2\varepsilon}\in L^1.
\]

Given an open subset $U\subset X$, $f \in H^{1,2}_{\text{loc}}(U, \mathsf{d},\mathfrak{m})$ is said to belong to the domain of the Laplacian, denoted  $f\in D(\mathbf{\Delta}, U)$,  if there exists a Radon measure $\mu$ on $U$ such that 

\[
\int_{U} \langle\nabla f,\nabla \psi\rangle \, \mathrm{d}\mathfrak{m} = -\int_{U} \psi \,\mathrm{d}\mu, \ \forall \psi \in \text{Lip}_c(U,\mathsf{d}) \cap L^1(U, |\mu|).
\]
In this case we write $\mathbf{\Delta} f\llcorner_{U}:= \mu$. If moreover $\mathbf{\Delta} f\llcorner_{U}\ll\mathfrak{m}\llcorner_{U}$ with density in $L^2_{\text{loc}}(U,\mathfrak{m})$, we denote by $\Delta f$ the unique function in $L^2_{\text{loc}}(U,\mathfrak{m})$ such that $\mathbf{\Delta}f = (\Delta f)\,\mathfrak{m}$ on $U$. For such functions, the following integration by parts formula holds for all $\psi \in H^{1,2}(U, \mathsf{d}, \mathfrak{m})$ with compact support:

\[
\int_U \langle \nabla f, \nabla \psi\rangle \, \mathrm{d}\mathfrak{m} = -\int_U \psi\Delta f  \, \mathrm{d}\mathfrak{m}.
\]
Finally, if \(\Delta f \in L^2(U, \mathfrak{m})\), we write \(f \in D(\Delta,U)\) (or simply $f\in D(\Delta)$ when $U=X$).

We are now in a position to introduce the definition of RCD$(K,N)$ spaces. See \cite{AGS15,AMS19,EKS15,G15} for details.
\begin{defn}
For $K\in \mathbb{R}$, $N\in (1,\infty)$, a metric measure space $(X,\mathsf{d},\mathfrak{m})$ is said to be an RCD$(K,N)$ space if it satisfies the following conditions.
\begin{enumerate}
\item[$(1)$] {}{It is infinitesimally Hilbertian.}
\item[$(2)$] There exists $x\in X$ and $C>1$ such that for any $r>0$ we have $\mathfrak{m}(B_r(x))\leqslant C\exp(Cr^2)$.
\item[$(3)$] Any $f\in H^{1,2}$ satisfying $|\nabla f|\leqslant 1$ $\mathfrak{m}$-a.e. has a 1-Lipschitz representative.
\item[$(4)$] For any $f\in D(\Delta)$ with $\Delta f\in H^{1,2}$ and any $\varphi \in \mathrm{Test}F\left({X},\mathsf{d},\mathfrak{m}\right)$ with
  $ \varphi \geqslant 0$, we have
\[
\frac{1}{2}\int_X |\nabla f|^2 \Delta \varphi\mathop{\mathrm{d}\mathfrak{m}}\geqslant \int_X \varphi \left(\frac{(\Delta f)^2}{N}+\langle \nabla f,\nabla \Delta f\rangle+K|\nabla f|^2 \right)\mathrm{d}\mathfrak{m},
\]
where $\mathrm{Test}F({X},\mathsf{d},\mathfrak{m})$ is the class of test functions defined by
 \[
\mathrm{Test}F({X},\mathsf{d},\mathfrak{m}):=\left\{\varphi\in \text{Lip}({X},\mathsf{d})\cap D(\Delta)\cap L^\infty:\Delta \varphi\in H^{1,2}\cap L^\infty\right\}.
\]
\end{enumerate}
If in addition $\mathfrak{m}=\mathscr{H}^N$, then $(X,\mathsf{d},\mathfrak{m})$ is said to be a non-collapsed RCD$(K,N)$ space.
\end{defn}

Throughout this paper, when referring to an RCD$(K,N)$ space, we always mean $N \in (1,\infty)$. For the remainder of this subsection, let us consider $(X, \mathsf{d}, \mathfrak{m})$ as a representative example of an RCD$(K,N)$ space.

By \cite[Thm.~1]{R12} and H\"{o}lder's inequality, a local $(1,2)$-Poincar\'{e} inequality holds: 

\[
\fint_{B_r(x)} \left|f - \fint_{B_r(x)} f \, \mathrm{d}\mathfrak{m} \right|\, \mathrm{d}\mathfrak{m} \leqslant 4re^{|K|r^2} \left(\fint_{B_{2r}(x)} |\nabla f|^2 \, \mathrm{d}\mathfrak{m}\right)^{1/2},
\]
for any $f \in H^{1,2}$ and any ball $B_r(x)\subset X$. This together with the following Bishop-Gromov volume growth inequality (see \cite[Thm. 5.31]{LV09}, \cite[Thm. 2.3]{St06b}) implies that RCD$(K,N)$ spaces are PI spaces.
\begin{thm}\label{BGineq}
For any $R>r>0$ $({}{\text{with}}\  R\leqslant \pi\sqrt{(N-1)/K}$ if $K>0)$, it holds 
\[
\dfrac{\mathfrak{m}\left(B_R(x)\right)}{\mathfrak{m}\left(B_r(x)\right)}\leqslant  \frac{V_{K,N}(R)}{ V_{K,N}(r)},
\]
where $V_{K,N} (r)$ denotes the volume of a ball of radius
$r$ in the $N$-dimensional model space with Ricci curvature $K$ defined as
\[ 
{}{V_{K,N}(t):=\left\{
\begin{array}{ll}
\int_0^t\sin^{N-1}\left(s\sqrt{K/(N-1)}\right)\mathrm{d}s, &\text{if}\  K>0,\\
t^{N},&\text{if}\  K=0,\\
 \int_0^t\sinh^{N-1}\left(s\sqrt{-K/(N-1)}\right)\mathrm{d}s, &\text{if}\  K<0.
 \end{array}
 \right.}
 \]  
\end{thm}


Building on the contributions by Sturm (see \cite[Prop. 2.3]{St95}, \cite[Cor. 3.3]{St96}) and Jiang-Li-Zhang (as detailed in Thm. \ref{JLZ} below), it is established that RCD$(K,N)$ spaces possess locally Lipschitz continuous heat kernels. More precisely, there exists a non-negative function $\rho$ on $X \times X \times (0,\infty)$ such that the unique solution to the heat equation can be expressed as follows.
\[
\mathrm{h}_t f=\int_X \rho(\cdot,y,t)f(y)\,\mathrm{d}\mathfrak{m}(y),\ \forall f\in L^2,\  \forall t>0,
\]
where by unique solution we mean $\{\mathrm{h}_t f\}_{t>0}$ solves the heat equation in $L^2$, i.e., \[
\frac{d }{d t} \mathrm{h}_t f=\Delta \mathrm{h}_t f,\ \text{in }L^2;\ \  \lim_{t\downarrow 0}\| \mathrm{h}_t f-f\|_{L^2}=0.
\]

\begin{thm}[Gaussian estimates for the heat kernel {\cite[{}{Thms. 1.1 and 1.2}]{JLZ16}}]\label{JLZ}
Let $\rho$ be the heat kernel of $(X,\mathsf{d},\mathfrak{m})$. Then given any $\varepsilon>0$, there exists {}{a constant $C=C(K,N,\varepsilon)$ such that
\begin{equation}\label{JLZineq}
C^{-1}\exp\left(-\frac{\mathsf{d}^2\left(x,y\right)}{(4-\varepsilon )t}-C t\right)\leqslant \mathfrak{m}(B_{\sqrt{t}}(x))\rho(x,y,t)\leqslant C\exp\left(-\frac{\mathsf{d}^2\left(x,y\right)}{(4+\varepsilon )t}+C t\right)
\end{equation}
holds for any $x,y\in X$ and
\begin{equation}\label{JLZineq2}
|\nabla_x \rho(x,y,t)|\leqslant \frac{C}{\sqrt{t}\mathop{\mathfrak{m}(B_{\sqrt{t}}(x))}}\exp\left(-\frac{\mathsf{d}\left(x,y\right)}{(4+\varepsilon)t}+C t\right)
\end{equation}
}
\end{thm}

\begin{remark}[Rescaled RCD spaces]
 For any
$a, b\in (0,\infty)$, the rescaled metric measure space $(X, a\mathsf{d}, b\mathfrak{m})$ is an RCD$(a^{-2}K, N)$ space, and its heat
kernel $\tilde{\rho}$ can be expressed as
\[
\begin{aligned}
\tilde{\rho}:X\times X\times (0,\infty)&\longrightarrow (0,\infty)\\
      (x,y,t)&\longmapsto b^{-1}\rho(x,y,a^{-2}t).
\end{aligned}
\]
\end{remark}
\subsection{Convergence of RCD spaces}

We omit the definition of pointed measured Gromov-Hausdorff (pmGH) convergence (see  \cite[Sec. 3]{GMS15} for details).
\begin{thm}[Precompactness of pointed RCD$(K,N)$ spaces under pmGH-convergence \cite{GMS15}]\label{GMS}
Let $\{(X_i,\mathsf{d}_i,\mathfrak{m}_i,x_i)\}$ be a sequence of pointed $\mathrm{RCD}(K,N)$ spaces such that
\[
0<\liminf_{i\rightarrow \infty}\mathfrak{m}_i(B_1^{X_i}(x_i))\leqslant \limsup_{i\rightarrow \infty}\mathfrak{m}_i(B_1^{X_i}(x_i))<\infty.
\]
Then this sequence has a subsequence $\{(X_{i(j)},\mathsf{d}_{i(j)},\mathfrak{m}_{i(j)},x_{i(j)})\}$ which $\mathrm{pmGH}$-converges to a pointed $\mathrm{RCD}(K,N)$ space $(X,\mathsf{d},\mathfrak{m},x)$.
\end{thm}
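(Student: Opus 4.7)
The plan is to deduce this compactness statement by combining a pre-compactness argument at the pointed metric level with the stability of the RCD$(K,N)$ synthetic curvature-dimension condition under pointed measured Gromov-Hausdorff convergence. The hypothesis rules out both mass collapse and mass explosion at unit scale around the base points, which together with Bishop-Gromov will propagate uniform control to every scale.

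First I would establish pointed Gromov-Hausdorff pre-compactness of the underlying metric spaces $\{(X_i,\mathsf{d}_i,x_i)\}$. Theorem \ref{BGineq} applied to $(X_i,\mathsf{d}_i,\mathfrak{m}_i)$ with respect to $B_1(x_i)$ gives
\[
\frac{\mathfrak{m}_i(B_R(x_i))}{\mathfrak{m}_i(B_r(x_i))} \leqslant \frac{V_{K,N}(R)}{V_{K,N}(r)}, \qquad 0<r<R,
\]
and the hypothesis $0<\liminf \mathfrak{m}_i(B_1(x_i)) \leqslant \limsup \mathfrak{m}_i(B_1(x_i))<\infty$ then produces, for every fixed $R>0$, two-sided bounds on $\mathfrak{m}_i(B_R(x_i))$ independent of $i$, as well as a uniform doubling-type inequality on balls of radius at most $R$. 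In a CD$(K,N)$ space this doubling yields, for each $R>0$ and $\varepsilon>0$, a uniform cardinality bound $N(R,\varepsilon)$ on $\varepsilon$-nets in $B_R(x_i)$; Gromov's pointed compactness theorem then delivers a subsequence and a pointed complete length space $(X,\mathsf{d},x)$ with $(X_{i(j)},\mathsf{d}_{i(j)},x_{i(j)}) \to (X,\mathsf{d},x)$ in the pGH sense, realized by $\varepsilon_j$-isometries into a common proper ambient space.

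Next I would upgrade to measured convergence. The uniform bounds $\mathfrak{m}_i(B_R(x_i))\leqslant C(R)$ make the measures $\mathfrak{m}_{i(j)}$, pushed into the common ambient space by the $\varepsilon_j$-isometries, a locally uniformly finite family of Radon measures. A Prokhorov-type diagonal extraction along an exhaustion of $X$ by balls produces a further subsequence and a locally finite Radon measure $\mathfrak{m}$ on $X$ with $\mathfrak{m}_{i(j)} \rightharpoonup \mathfrak{m}$ in duality with bounded continuous functions of bounded support. The lower bound $\liminf \mathfrak{m}_i(B_1(x_i))>0$, together with the outer regularity lemma for weak-$*$ convergence on open balls, forces $\mathfrak{m}(B_1(x))>0$, so the limit has full support in a neighborhood of $x$; this is precisely the notion of pmGH convergence of \cite{GMS15}.

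The main obstacle, and the technical heart of the result, is verifying that the limit $(X,\mathsf{d},\mathfrak{m})$ itself satisfies RCD$(K,N)$. The exponential volume growth condition is immediate from the Bishop-Gromov estimate above, which descends to $\mathfrak{m}$ by weak-$*$ lower/upper semicontinuity on open/closed balls. The remaining two conditions, infinitesimal Hilbertianity and the Bakry--\'Emery/Bochner inequality of item (3), are the delicate points and require the stability theory for Cheeger energies along pmGH limits: one shows Mosco convergence $\mathrm{Ch}_i \to \mathrm{Ch}$ via $\Gamma$-convergence of the relaxed slope functionals, reads off quadraticity of the limit $\mathrm{Ch}$ from quadraticity of each $\mathrm{Ch}_i$, and then passes the Bochner inequality to the limit by testing against suitable test functions in $\mathrm{Test}F$ constructed from heat flow regularization of Lipschitz cutoffs. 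Handling the cross term $\langle \nabla f,\nabla \Delta f\rangle$ under only weak convergence of the reference measures is the subtle step; at the level of this paper it is legitimate to invoke the full stability package from \cite{GMS15,AMS19,EKS15} as a black box, rather than to redevelop it.
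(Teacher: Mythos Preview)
The paper does not supply its own proof of this theorem at all: it is stated as a background fact and attributed to \cite[Sec.~3]{GMS15}, with the surrounding sentence ``The details about this definition and the following theorem can be found for instance in \cite[Sec.~3]{GMS15}.'' So there is nothing in the paper to compare your argument against.

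That said, your outline is the standard route taken in the references the paper cites: Bishop--Gromov gives uniform doubling and hence Gromov pre-compactness at the pointed metric level; the two-sided mass bound at unit scale plus Bishop--Gromov gives locally uniform mass bounds, so Prokhorov yields a subsequential weak limit measure with nontrivial support; and stability of the $\mathrm{RCD}(K,N)$ condition under pmGH convergence (via Mosco convergence of Cheeger energies and stability of the Bochner/entropy formulation) is exactly what \cite{GMS15,EKS15,AMS19} provide. Your remark that the Bochner cross term is the delicate step and is legitimately black-boxed here is accurate; indeed the paper treats the whole statement as a black box.

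One small caution: you assert that $\mathfrak{m}(B_1(x))>0$ implies ``full support in a neighborhood of $x$''. What you actually need (and what pmGH convergence in \cite{GMS15} requires) is that $\mathfrak{m}$ has full support on all of $X$; this follows because the lower mass bound from Bishop--Gromov holds uniformly for \emph{every} ball in each $X_i$ (not just those centered at $x_i$), and passes to the limit. It would be worth phrasing that step more carefully.
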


{}{Thus, we define the \textit{regular sets} as follows.}

\begin{defn}[Regular set]\label{regularset}
Let $(X,\mathsf{d},\mathfrak{m})$ be an RCD$(K,N)$ space. The tangent space at $x\in X$, denoted by $\mathrm{Tan}(X,\mathsf{d},\mathfrak{m},x)$, is defined as
\[
\left\{(Y,\mathsf{d}_Y,\mathfrak{m}_Y,y):\exists r_i\downarrow 0 \text{, s.t. }\left(X,{r_i}^{-1}\mathsf{d},(\mathfrak{m}(B_{r_i}(x)))^{-1}\mathfrak{m},x\right)\xrightarrow{\mathrm{pmGH}}(Y,\mathsf{d}_Y,\mathfrak{m}_Y,y)\right\}.
\]
The {}{set of $k$-dimensional regular points} is then defined as
\[
\mathcal{R}_k:=\left\{x\in X:\mathrm{Tan}(X,\mathsf{d},\mathfrak{m},x)=\left\{\left(\mathbb{R}^k,\mathsf{d}_{\mathbb{R}^k},{\omega_k}^{-1}\mathscr{L}^k,0_k\right)\right\}\right\}.
\]
\end{defn}
For the subsequent result concerning the existence of the \textit{essential dimension} in RCD spaces, we refer to \cite[Thm. 0.1]{BS20}.
\begin{thm}\label{BS}
Let $(X,\mathsf{d},\mathfrak{m})$ be an $\mathrm{RCD}(K,N)$ space. There exists a unique integer $n\in [1,N]$ such that $\mathfrak{m}\left(X\setminus \mathcal{R}_n\right)=0$. This integer $n$ is referred to as the essential dimension of $(X,\mathsf{d},\mathfrak{m})$ and is denoted by $\mathrm{dim}_{\mathsf{d},\mathfrak{m}}(X)$. In particular, $\mathfrak{m}$ can be represented as $\theta \mathscr{H}^n\llcorner \mathcal{R}_n$ for some Borel function $\theta$ defined on $X$.
\end{thm}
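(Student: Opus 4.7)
The plan is to reduce the theorem to two ingredients: first, a Mondino--Naber-type rectifiability result asserting that $\mathfrak{m}$-a.e.\ point admits at least one Euclidean tangent of some integer dimension in $[1,\lfloor N\rfloor]$, i.e.\ $\mathfrak{m}\bigl(X\setminus \bigcup_{k=1}^{\lfloor N\rfloor}\mathcal{R}_k\bigr)=0$; second, the constancy of this dimension $\mathfrak{m}$-a.e. Given the first ingredient, uniqueness of the integer $n$ is immediate from the pairwise disjointness of the $\mathcal{R}_k$, and existence amounts to showing that at most one $\mathcal{R}_k$ carries positive $\mathfrak{m}$-mass.

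For the constancy step, the strategy I would carry out is a contradiction argument via $\delta$-splitting maps. Suppose $\mathfrak{m}(\mathcal{R}_{k_1})>0$ and $\mathfrak{m}(\mathcal{R}_{k_2})>0$ with $k_1<k_2$, and pick a density point $x_2$ of $\mathcal{R}_{k_2}$. By Definition \ref{regularset} and an $\epsilon$-regularity statement for RCD spaces, at arbitrarily small scales $r>0$ there exist harmonic $(k_2,\delta)$-splitting maps $u^{(r)}:B_r(x_2)\to\mathbb{R}^{k_2}$ with $\delta=\delta(r)\downarrow 0$; roughly, the gradients $\nabla u^{(r),\alpha}$ form a nearly orthonormal frame in an $L^2$-integral sense. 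Because the $u^{(r),\alpha}$ lie in $\mathrm{Test}F$, the Gigli differential calculus plus the Bochner $\Gamma_2$ estimate built into the RCD$(K,N)$ condition yields Sobolev regularity and bounded divergence of the gradient vector fields, so their Regular Lagrangian Flows exist, are essentially measure-preserving, and transport the approximate splitting structure to a definite proportion of points in $B_r(x_2)$. Combining this transport with the Bishop--Gromov doubling (Theorem \ref{BGineq}) and a covering argument forces $\mathfrak{m}$-a.e.\ point of $B_r(x_2)$ to have, in the limit, a tangent containing an $\mathbb{R}^{k_2}$ factor. But at a density point of $\mathcal{R}_{k_1}$ inside $B_r(x_2)$ (such points exist if $\mathfrak{m}(\mathcal{R}_{k_1}\cap B_r(x_2))$ does not vanish at a definite rate), the tangent is $\mathbb{R}^{k_1}$ with $k_1<k_2$, which is incompatible; iterating, $\mathfrak{m}(\mathcal{R}_{k_1})=0$, contradicting the hypothesis.

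For the representation $\mathfrak{m}=\theta\,\mathscr{H}^n\llcorner\mathcal{R}_n$, I would invoke the $(\mathfrak{m},n)$-rectifiability of $\mathcal{R}_n$ (a byproduct of the Mondino--Naber construction) together with Bishop--Gromov (Theorem \ref{BGineq}) to show that the density $\Theta^n(\mathfrak{m},x):=\lim_{r\downarrow 0}\mathfrak{m}(B_r(x))/(\omega_n r^n)$ exists, is positive and finite $\mathfrak{m}$-a.e.\ on $\mathcal{R}_n$, and is a Borel function; the standard differentiation theorem for Radon measures on rectifiable sets then gives $\mathfrak{m}\llcorner\mathcal{R}_n = \Theta^n(\mathfrak{m},\cdot)\,\mathscr{H}^n\llcorner\mathcal{R}_n$, so $\theta:=\Theta^n(\mathfrak{m},\cdot)$ does the job. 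The main obstacle is the transport step in the constancy argument: making the claim that a $(k_2,\delta)$-splitting map propagates its Euclidean-factor structure to typical nearby points quantitatively rigorous requires the full machinery of Sobolev vector-field flows on RCD spaces, together with quantitative versions of the splitting theorem, and this is the technical heart on which the proof hinges.
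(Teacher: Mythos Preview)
The paper does not give its own proof of this theorem: it is stated in the preliminaries with the attribution ``we refer to \cite[Thm.~0.1]{BS20}'' (Bru\`e--Semola), and the representation $\mathfrak{m}=\theta\,\mathscr{H}^n\llcorner\mathcal{R}_n$ is likewise outsourced to \cite[Thm.~4.1]{AHT18} in Remark~\ref{AHT}. So there is no in-paper argument to compare against; the statement functions purely as cited background.

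That said, your sketch is a faithful outline of the Bru\`e--Semola proof itself: Mondino--Naber rectifiability for the decomposition into the $\mathcal{R}_k$'s, then propagation of $\delta$-splitting maps via Regular Lagrangian Flows of their gradient vector fields to force constancy of the dimension, and finally the density/differentiation argument for the representation of $\mathfrak{m}$. One point to tighten: in your first ingredient you write that $\mathfrak{m}$-a.e.\ point ``admits at least one Euclidean tangent,'' but the sets $\mathcal{R}_k$ in Definition~\ref{regularset} require the tangent to be \emph{unique}; bridging from ``some Euclidean tangent'' to $\mathcal{R}_k$ needs the iterated-tangent/stability argument, which you do not mention. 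Also, in the contradiction step you speak of a density point of $\mathcal{R}_{k_1}$ \emph{inside} $B_r(x_2)$ for $x_2$ a density point of $\mathcal{R}_{k_2}$; since the $\mathcal{R}_k$ are disjoint, such points need not exist near $x_2$, and the actual argument proceeds differently (one propagates the splitting structure globally along flow lines rather than locally within a single ball). These are genuine but fixable gaps in an otherwise correct high-level plan for a result the paper simply quotes.
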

\begin{remark}[Density function {\cite[Thm.~4.1]{AHT18}}]\label{AHT}
Under Thm. \ref{BS}, {}{define \[
\begin{aligned}
\theta^\ast:X&\longrightarrow [0,\infty)\\
                    x&\longmapsto \left\{\begin{aligned}\lim_{r\downarrow 0} \frac{\mathfrak{m}(B_r(x))}{\omega_n r^n},&\ \text{if the limit exists,}\\
0,\ \ \ \ \ \ \ \ \ \ \ \ \ \ \ \  &\ \text{otherwise},
\end{aligned}\right.
\end{aligned}
\] and $\mathcal{R}_n^\ast:=\{x\in \mathcal{R}_n: \theta^\ast(x)>0\}.$}
Then $\mathfrak{m}\left(\mathcal{R}_n\setminus \mathcal{R}_n^\ast\right)=0$ and {}{$\theta^\ast=\theta$ $\mathfrak{m}\text{-a.e. }$on $\mathcal{R}_n^\ast$}.
\end{remark}

Specifically, in the case of non-collapsed RCD$(K,n)$ spaces, the following assertion regarding the density function and the essential dimension is valid.
\begin{thm}[{\cite[Cor.~1.7]{DG18}}]\label{DG18cor1.7} Let $(X,\mathsf{d},\mathscr{H}^n)$ be a non-collapsed
$\mathrm{RCD}(K, n)$ space. Then $\mathrm{dim}_{\mathsf{d},\mathscr{H}^n}(X)=n$ (notably $n$ is an integer) and {}{$\sup_X\theta^\ast\leqslant 1$}. Moreover, equality $\theta^\ast(x)=1$ occurs if and only if $x\in \mathcal{R}_n$.
\end{thm}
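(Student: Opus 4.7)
My plan is to run a blow-up analysis at every point, using Bishop--Gromov monotonicity together with the continuity of $\mathscr{H}^n$ under non-collapsed $\mathrm{pmGH}$-convergence to identify tangent spaces. First, by Theorem \ref{BGineq} applied to the reference measure $\mathscr{H}^n$, the ratio $\mathscr{H}^n(B_r(x))/V_{K,n}(r)$ is non-increasing in $r$; since $V_{K,n}(r)/(\omega_n r^n) \to 1$ as $r \downarrow 0$, the density $\theta(x) := \lim_{r\downarrow 0} \mathscr{H}^n(B_r(x))/(\omega_n r^n)$ exists in $[0,\infty]$ at every $x \in X$, and $\theta(x) > 0$ on the support of $\mathscr{H}^n$, i.e.\ everywhere. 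For any $x$ and $r_i \downarrow 0$, I would consider the rescaled pointed spaces $(X, r_i^{-1}\mathsf{d}, r_i^{-n}\mathscr{H}^n, x)$, noting the scaling identity $r_i^{-n}\mathscr{H}^n_{\mathsf{d}} = \mathscr{H}^n_{r_i^{-1}\mathsf{d}}$. These are non-collapsed $\mathrm{RCD}(r_i^2 K, n)$ spaces with unit-ball mass tending to $\omega_n \theta(x) \in (0, \infty)$, and by Theorem \ref{GMS} they admit a subsequential $\mathrm{pmGH}$-limit $(Y, \mathsf{d}_Y, \mathfrak{m}_Y, y)$ that is $\mathrm{RCD}(0, n)$; the technical heart is to identify $\mathfrak{m}_Y = \mathscr{H}^n_{\mathsf{d}_Y}$ via continuity of the Hausdorff measure under non-collapsed convergence, producing a non-collapsed $\mathrm{RCD}(0, n)$ tangent with $\mathscr{H}^n_Y(B_1(y)) = \omega_n \theta(x)$.

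To establish the essential-dimension statement and the density at regular points, let $k := \mathrm{dim}_{\mathsf{d}, \mathscr{H}^n}(X) \leqslant n$ and pick $x \in \mathcal{R}_k^\ast$ (possible by Theorem \ref{BS} and Remark \ref{AHT}). By definition, the normalised blow-up $(X, r_i^{-1}\mathsf{d}, \mathscr{H}^n/\mathscr{H}^n(B_{r_i}(x)), x)$ converges to $(\mathbb{R}^k, \mathsf{d}_{\mathbb{R}^k}, \omega_k^{-1}\mathscr{L}^k, 0_k)$, while along the same scales the non-collapsed blow-up yields $(Y, \mathsf{d}_Y, \mathscr{H}^n_Y, y)$; the two measures differ only by the scalar factor $r_i^n/\mathscr{H}^n(B_{r_i}(x)) \to 1/(\omega_n \theta(x))$. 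Hence $(Y, \mathsf{d}_Y)$ is isometric to $(\mathbb{R}^k, \mathsf{d}_{\mathbb{R}^k})$ and $\mathscr{H}^n_{\mathbb{R}^k}$ is a positive multiple of $\mathscr{L}^k$. Since $\mathscr{H}^n$ on $\mathbb{R}^k$ vanishes identically for $n > k$ and is not locally finite for $n < k$, we are forced to $k = n$, which proves $\mathrm{dim}_{\mathsf{d}, \mathscr{H}^n}(X) = n$ and the integrality of $n$; equating the two measures gives $\theta(x) = 1$ at every $x \in \mathcal{R}_n$, yielding the ``if'' direction of the last assertion.

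For the bound $\theta(x) \leqslant 1$ at an arbitrary $x$, I iterate the blow-up. In the non-collapsed $\mathrm{RCD}(0, n)$ tangent $(Y, \mathsf{d}_Y, \mathscr{H}^n_Y, y)$, Bishop--Gromov with $V_{0,n}(r) = \omega_n r^n$ gives $\theta_Y(y) \geqslant \mathscr{H}^n_Y(B_1(y))/\omega_n = \theta(x)$, so iterated tangents produce a non-decreasing sequence of apex densities. Extracting a limiting iterated tangent and invoking the sharp case of Bishop--Gromov (cone rigidity for $\mathrm{RCD}$ spaces), one arrives at $(Z, \mathsf{d}_Z, \mathscr{H}^n_Z, z_0)$ which is a metric cone, so $\mathscr{H}^n_Z(B_r(z_0)) = r^n \mathscr{H}^n_Z(B_1(z_0))$ and $\theta_Z(z_0)$ equals the basepoint-independent asymptotic volume ratio of $Z$. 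Applying the already proven essential-dimension statement to $Z$, one can pick $z^\ast \in \mathcal{R}_n \cap Z$, where $\theta_Z(z^\ast) = 1$; Bishop--Gromov from $z^\ast$ then gives $\lim_{r \to \infty} \mathscr{H}^n_Z(B_r(z^\ast))/(\omega_n r^n) \leqslant 1$, so $\theta_Z(z_0) \leqslant 1$, and chaining with the monotonicity of densities along the iteration gives $\theta(x) \leqslant \theta_Z(z_0) \leqslant 1$.

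The ``only if'' direction of the last assertion is then immediate: if $\theta(x) = 1$, the non-collapsed $\mathrm{RCD}(0, n)$ tangent $Y$ satisfies $\mathscr{H}^n_Y(B_1(y)) = \omega_n$ together with the now established pointwise bound $\theta_Y \leqslant 1$, so by the rigidity case of Bishop--Gromov on $(Y, \mathsf{d}_Y, \mathscr{H}^n_Y)$ the tangent must be $(\mathbb{R}^n, \mathsf{d}_{\mathbb{R}^n}, \mathscr{L}^n, 0_n)$, whence $x \in \mathcal{R}_n$. I expect the main obstacles to be the continuity of $\mathscr{H}^n$ under non-collapsed $\mathrm{pmGH}$-convergence (which is what truly upgrades a weak measure limit into a genuine Hausdorff measure on the tangent) and the cone rigidity needed to guarantee that an iterated tangent is actually a metric cone with controllable apex density; these two are precisely the technical core of the argument of De Philippis--Gigli.
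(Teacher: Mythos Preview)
The paper does not supply its own proof of this statement: it is quoted verbatim as a preliminary result from \cite[Cor.~1.7]{DG18}, with no argument given. So there is nothing in the paper to compare your proposal against directly.

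That said, your sketch is essentially the De Philippis--Gigli argument, and the ingredients you isolate (Bishop--Gromov monotonicity to define $\theta$ everywhere, stability of $\mathscr{H}^n$ under non-collapsed pmGH limits, volume-cone-implies-metric-cone rigidity) are exactly the right ones. One simplification is available in your third paragraph: you do not need to iterate tangents at all. For any $R>0$ the non-collapsed blow-up satisfies
\[
\frac{\mathscr{H}^n_Y(B_R(y))}{\omega_n R^n}=\lim_{i\to\infty}\frac{\mathscr{H}^n\big(B_{Rr_i}(x)\big)}{\omega_n (Rr_i)^n}=\theta(x),
\]
so the \emph{first} tangent $Y$ already has constant Bishop--Gromov ratio $\theta(x)$ at every scale, hence is a volume cone and (by the rigidity you cite) a metric cone. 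Your ``non-decreasing sequence of apex densities'' is in fact constant from the first step, which also removes any worry about circularity in the extraction of a limiting iterated tangent. With that adjustment your outline matches the original proof.
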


In the remainder of this subsection, we fix a sequence of pointed RCD$(K,N)$ spaces $\{(X_i, \mathsf{d}_i, \mathfrak{m}_i, x_i)\}$, which converges in the pointed measured Gromov-Hausdorff (pmGH) sense to another pointed RCD$(K,N)$ space $(X, \mathsf{d}, \mathfrak{m}, x)$.

We assume that readers are familiar with the definitions of $L^2$-weak and  $L^2$-strong convergence, and their counterparts for Sobolev functions, namely $H^{1,2}$-weak and $H^{1,2}$-strong convergence on various spaces. For reference, see \cite{AST17,AH17,GMS15} and \cite[Defn. 1.1]{H15}. We conclude this subsection by presenting some useful results related to this topic.

\begin{thm}[Arzel\`{a}-Ascoli Thm.]\label{AAthm}
Let $f_i\in C(X_i)$ $(i\in\mathbb{N})$ with {}{$
\sup_i |f_i(x_i)|<\infty$}. If for any $\varepsilon,R\in (0,\infty)$, there exists $\delta\in (0,1)$ such that for all $i\in \mathbb{N}$ it holds that
\[
|f_i(y_i)-f_i(z_i)|<\varepsilon, \ \forall y_i,z_i\in B_R(x_i) \text{ such that } \mathsf{d}_i(y_i,z_i)<\delta.
\]
Then after passing to a subsequence, there exists $f\in C(X)$ such that $\{f_i\}$ pointwisely converges to $f$ in the following sense:
\[
f_i(y_i)\rightarrow f(y) \text{\ whenever\ } X_i\ni y_i\rightarrow y\in X.
\]
\end{thm}
By a combination of the above theorem and Thm. \ref{JLZineq}, we have:
\begin{thm}[Pointwise convergence of heat kernels {\cite[Thm. 3.3]{AHT18}}]\label{pchk}
The heat kernels $\rho_i$ of $(X_i,\mathsf{d}_i,\mathfrak{m}_i)$ satisfy
\[
\lim_{i\rightarrow \infty} \rho_i(x_i,y_i,t_i)=\rho(x,y,t)
\]
whenever $X_i\times X_i\times (0,\infty)\ni (x_i,y_i,t_i)\rightarrow (x,y,t)\in X\times X\times (0,\infty)$.
\end{thm}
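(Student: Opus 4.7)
The plan is to combine the Gaussian upper bound and spatial gradient bound from Theorem \ref{JLZ} with the stability of heat flows along pmGH converging RCD spaces to obtain locally uniform convergence, and then to promote this to a pointwise identification with the heat kernel of the limit. I would first establish that $\rho_i$ is uniformly bounded on regions of the form $\{(y,z,s):y,z\in B_R(p_i),\, s\in[\tau,T]\}$, where $p_i\to p\in X$ and $0<\tau<T<\infty$. The upper bound in Theorem \ref{JLZ} yields
\[
\rho_i(y,z,s)\leqslant \frac{C_1}{\mathfrak{m}_i(B_{\sqrt{s}}(y))}\exp\left(-\frac{\mathsf{d}_i^2(y,z)}{(4+\epsilon)s}+C_2 s\right).
\]
Since pmGH convergence gives $\mathfrak{m}_i(B_r(q_i))\to\mathfrak{m}(B_r(q))>0$ at continuity radii of the limit and Bishop--Gromov (Theorem \ref{BGineq}) controls the dependence on center and radius, the denominator admits a uniform positive lower bound on the region, so $\rho_i$ is uniformly bounded above there.

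Next, the spatial gradient estimate in Theorem \ref{JLZ} gives uniform Lipschitz control of $y\mapsto\rho_i(y,z,s)$, and by symmetry of the heat kernel the same for $z$. Regularity in $s$ follows from the semigroup identity $\rho_i(y,z,t+s)=\int_{X_i}\rho_i(y,w,t)\rho_i(w,z,s)\,\mathrm{d}\mathfrak{m}_i(w)$ together with the previous uniform bounds. With uniform boundedness and equicontinuity in all three variables on every bounded region, an Arzelà--Ascoli argument adapted to pmGH convergence, carried out in a common extrinsic realization of the convergence as in \cite{GMS15}, produces a subsequence $\{i(j)\}$ and a continuous function $\tilde{\rho}$ on $X\times X\times(0,\infty)$ to which $\rho_{i(j)}$ converges locally uniformly.

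Finally, I would identify $\tilde{\rho}$ with the heat kernel $\rho$ of the limit. Mosco convergence of Cheeger energies along pmGH converging RCD spaces (see \cite{GMS15,AH17}) yields strong $L^2$-convergence of the associated heat semigroups: whenever $f_i\to f$ strongly in $L^2$, $\mathrm{h}_{t}^i f_i\to\mathrm{h}_t f$ strongly in $L^2$. Choosing $f\in C_c(X)$ with a compatible approximating sequence $f_i\in C_c(X_i)$ and using the representations $\mathrm{h}_{t}^i f_i(y_i)=\int_{X_i}\rho_i(y_i,w,t)f_i(w)\,\mathrm{d}\mathfrak{m}_i(w)$, the uniform bounds together with the locally uniform convergence $\rho_{i(j)}\to\tilde{\rho}$ let one pass to the limit under the integral and obtain $\mathrm{h}_t f(y)=\int_X\tilde{\rho}(y,w,t)f(w)\,\mathrm{d}\mathfrak{m}(w)$ for all such $f$. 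By the characterization of the heat kernel, $\tilde{\rho}=\rho$, and since every subsequence of $\{\rho_i\}$ admits a further subsequence with the same limit $\rho$, the full sequence converges pointwise as required. The main obstacle is the Arzelà--Ascoli step: transferring estimates that live on the varying spaces $X_i$ into a genuine equicontinuity statement on the pmGH limit requires a careful use of $\varepsilon$-isometries or a common ambient realization; the subsequent identification of the limit is then a standard application of the stability theory for linear heat flows on RCD spaces.
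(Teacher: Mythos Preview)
The paper does not prove this statement: Theorem~\ref{pchk} is quoted verbatim from \cite[Thm.~3.3]{AHT18} and used as a black box, so there is no ``paper's own proof'' to compare against. Your outline---uniform Gaussian and gradient bounds from Theorem~\ref{JLZ}, Arzel\`a--Ascoli along the pmGH convergence, and identification of the limit via Mosco convergence of Cheeger energies and $L^2$-stability of heat flows---is essentially the argument given in the cited reference, and is correct at the level of detail you provide.
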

        {}{We also recall a precompactness theorem with respect to $L^2$-weak convergence (see \cite{GMS15,AH17, AST17}).}
\begin{thm}
Assume $f_i\in L^2(\mathfrak{m}_i)$ $(i\in\mathbb{N})$ such that $\sup_i \|f_i\|_{L^2(\mathfrak{m}_i)}<\infty$, then after passing to a subsequence $f_i$ $L^2$-weakly converges to some $f\in L^2(\mathfrak{m})$.
\end{thm}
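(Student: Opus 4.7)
The strategy is to regularize the $f_i$ via the heat semigroup to produce continuous functions to which Theorem \ref{AAthm} applies, then pass to the small-time limit to identify the weak limit. For each $t>0$ set $u_i^t := \mathrm{h}_t^{X_i} f_i$. By Cauchy--Schwarz and the semigroup identity,
\[
|u_i^t(y)|^2 \leq \rho_i(y,y,2t)\,\|f_i\|_{L^2(\mathfrak{m}_i)}^2,
\]
and the Gaussian upper bound of Theorem \ref{JLZ} combined with the Bishop--Gromov inequality (Theorem \ref{BGineq}) bounds $\rho_i(y,y,2t)$ uniformly in $i$ for $y$ in any ball $B_R^{X_i}(x_i)$. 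Consequently $\{u_i^t\}_i$ is locally uniformly bounded. The gradient estimate in Theorem \ref{JLZ} furnishes a uniform local Lipschitz bound, hence local equicontinuity.

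Applying Theorem \ref{AAthm} for each $t$ in the countable set $\{1/k\}_{k\ge 1}$ and performing a diagonal extraction, I obtain a single subsequence, still indexed by $i$, such that $u_i^{1/k}$ converges pointwise to some $u^{1/k}\in C(X)$ for every $k$. Since $\mathrm{h}_t$ is an $L^2$-contraction, by the lower semicontinuity of the $L^2$-norm under pmGH convergence one has $\|u^{1/k}\|_{L^2(\mathfrak{m})} \leq \liminf_i \|u_i^{1/k}\|_{L^2(\mathfrak{m}_i)} \leq M := \sup_i \|f_i\|_{L^2(\mathfrak{m}_i)}$. Thus $\{u^{1/k}\}_k$ is bounded in the Hilbert space $L^2(\mathfrak{m})$, and classical Banach--Alaoglu produces a further subsequence with $u^{1/k_j} \rightharpoonup f$ in $L^2(\mathfrak{m})$ for some candidate limit $f$.

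It remains to verify that $f_i \to f$ $L^2$-weakly along the combined subsequence. For any $\varphi \in \mathrm{Lip}_c(X)$ together with compatible lifts $\varphi_i \in \mathrm{Lip}_c(X_i)$, the self-adjointness of $\mathrm{h}_t$ gives
\[
\int_{X_i} f_i \cdot \mathrm{h}_t \varphi_i \, \mathrm{d}\mathfrak{m}_i = \int_{X_i} u_i^t \cdot \varphi_i \, \mathrm{d}\mathfrak{m}_i.
\]
The right-hand side converges to $\int_X u^t \varphi\, \mathrm{d}\mathfrak{m}$ using the locally uniform convergence of $u_i^t$ and the weak convergence of measures. Letting $t = 1/k_j \to 0$, the weak limit property of $u^{1/k_j}$ yields $\int u^{1/k_j} \varphi \, \mathrm{d}\mathfrak{m} \to \int f\varphi\, \mathrm{d}\mathfrak{m}$, while on the left-hand side the strong continuity of the heat semigroup combined with the uniform bound $\|f_i\|_{L^2(\mathfrak{m}_i)} \leq M$ shows, by Cauchy--Schwarz, that
\[
\Bigl|\int f_i \mathrm{h}_t\varphi_i\, \mathrm{d}\mathfrak{m}_i - \int f_i \varphi_i\, \mathrm{d}\mathfrak{m}_i\Bigr| \leq M\,\|\mathrm{h}_t \varphi_i - \varphi_i\|_{L^2(\mathfrak{m}_i)} \to 0,
\]
the latter uniformly in $i$ thanks to the Gaussian estimates and the common Lipschitz/support control of the $\varphi_i$.

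The principal obstacle is coordinating the two limit processes---the subsequence extraction in $i$ and the smoothing parameter $t\to 0$---compatibly. This is handled by the $i$-uniform modulus for $\|\mathrm{h}_t\varphi_i-\varphi_i\|_{L^2(\mathfrak{m}_i)}$ above, which reduces the interchange of limits to a standard approximation argument. Once this is in place, the identification of $f$ as the weak limit follows immediately, completing the proof.
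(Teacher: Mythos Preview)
Your argument is correct and follows precisely the route the paper gestures at: the paper does not actually give a proof, but simply remarks that the statement ``can be demonstrated'' using the Arzel\`a--Ascoli theorem (Theorem~\ref{AAthm}), deferring to the references \cite{AST17,AH17,GMS15,H15}. Your heat-semigroup regularization $u_i^t=\mathrm{h}_t^{X_i}f_i$, combined with the Gaussian bounds of Theorem~\ref{JLZ} to feed into Theorem~\ref{AAthm}, is exactly the standard mechanism used in those references (notably \cite[Prop.~3.3]{AH17} and \cite[\S6]{GMS15}).

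One minor remark: the uniform-in-$i$ modulus $\|\mathrm{h}_t\varphi_i-\varphi_i\|_{L^2(\mathfrak{m}_i)}\to 0$ is most cleanly obtained not from the Gaussian estimates directly but from the spectral inequality $\|\mathrm{h}_t\varphi-\varphi\|_{L^2}^2\leq t\int|\nabla\varphi|^2\,\mathrm{d}\mathfrak{m}$, which is immediate from $(1-e^{-\lambda t})^2\leq \lambda t$ and requires only a uniform bound on $\int|\nabla\varphi_i|^2\,\mathrm{d}\mathfrak{m}_i$; this is automatic for lifts with common Lipschitz constant and support radius, together with the uniform volume bound from Theorem~\ref{BGineq}. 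This sidesteps the tail estimate for $\mathrm{h}_t\varphi_i$ outside the support of $\varphi_i$ that a purely pointwise Gaussian argument would require.
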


{}{The following theorems, which will play a crucial role in our subsequent analysis, are adapted from \cite{AH17,AH18}.
\begin{thm}[From $L^1$-strong convergence to $L^2$ {\cite[Prop.~3.3]{AH17}}]\label{AH17}
Let $f_i\in L^1(\mathfrak{m}_i)$ $(i\in\mathbb{N})$ such that $\{\sigma\circ f_i\}$ $L^2$-strongly converges to $\sigma\circ f$ for some $f\in L^1(\mathfrak{m})$, where $\sigma:t\mapsto \mathrm{sign}(t)\sqrt{|t|}$ is the signed square root. If moreover $\sup_i \|f_i\|_{L^\infty(\mathfrak{m}_i)}<\infty$, then $\lim_{i\to\infty}\|f_i\|_{L^p(\mathfrak{m}_i)}=\|f\|_{L^p(\mathfrak{m})}$ for any $p\in [1,\infty)$.
\end{thm}}
 \begin{thm}[Stability of Laplacian on balls{\cite[Thm. 4.4]{AH18}}]\label{AH18}
 Let $f_i\in D(\Delta, B_R(x_i))$ $(i\in\mathbb{N})$ such that $\sup_i \|\Delta f_i\|_{L^2\left(B_R(x_i)\right)}+\sup_i \| f_i\|_{H^{1,2}\left(B_R(x_i)\right)}<\infty$ and {}{that} $\{f_i\}$ $L^2$-strongly converges to some $f\in L^2(B_R(x),\mathfrak{m})$ on $B_R(x)$. Then, for all $r\in (0,R)$ the following holds.
 \begin{enumerate}
 \item[$(1)$] $f|_{B_r(x)}\in D(\Delta,B_r(x))$.
 \item[$(2)$] $\{\Delta_i f_i\}$ $L^2$-weakly converges to $\Delta f$ on $B_r(x)$.
 \item[$(3)$] $\{f_i\}$ $H^{1,2}$-strongly converges to $f$ on $B_r(x)$.
 \end{enumerate}
 \end{thm}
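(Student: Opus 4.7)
The overall approach is a weak-compactness argument: extract a subsequence along which $\{\Delta_i f_i\}$ converges $L^2$-weakly to some $g$, identify $g = \Delta f$ distributionally, and then promote the $H^{1,2}$-weak convergence of $\{f_i\}$ to strong convergence via energy convergence. First I would establish a Caccioppoli-type uniform $H^{1,2}_{\mathrm{loc}}$-bound for the $f_i$. Fix $r' \in (r, R)$ and a natural cutoff $\chi_i \in \mathrm{Lip}_c(B_{r'}(x_i))$ with $\chi_i \equiv 1$ on $B_r(x_i)$ and $\mathrm{Lip}\,\chi_i \leq C/(r'-r)$, chosen of the form $\chi_i := \max\{0,\min\{1,(r'-\mathsf{d}_i(\cdot,x_i))/(r'-r)\}\}$ so that $\chi_i \to \chi$ uniformly under the pmGH convergence. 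Testing the defining identity of $f_i \in D(\Delta, B_R(x_i))$ against $\chi_i^2 f_i$ and absorbing via Cauchy--Schwarz yields
\[
\int_{B_r(x_i)} |\nabla f_i|^2 \, d\mathfrak{m}_i \leq C \bigl( \|\Delta_i f_i\|_{L^2(B_{r'}(x_i))}^2 + \|f_i\|_{L^2(B_{r'}(x_i))}^2 \bigr),
\]
uniformly in $i$. Combined with the $L^2$-weak precompactness theorem above, along a subsequence one obtains $L^2$-weak convergence $\Delta_i f_i \rightharpoonup g$ on $B_r(x)$ and $H^{1,2}_{\mathrm{loc}}$-weak convergence $f_i \rightharpoonup f$ on $B_r(x)$.

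Next I would identify $g$ with $\Delta f$. Given $\varphi \in \mathrm{Lip}_c(B_r(x))$, lift it to $\varphi_i \in \mathrm{Lip}_c(B_r(x_i))$ converging $H^{1,2}$-strongly to $\varphi$ (built by McShane-type extension composed with the pmGH $\varepsilon$-isometries). The defining identity
\[
\int_{X_i} \langle \nabla f_i, \nabla \varphi_i \rangle \, d\mathfrak{m}_i = -\int_{X_i} \Delta_i f_i \cdot \varphi_i \, d\mathfrak{m}_i
\]
passes to the limit in $i$: the right-hand side by the $L^2$ (weak)$\times$(strong) pairing, and the left-hand side by the corresponding (weak)$\times$(strong) stability of gradient pairings under pmGH convergence. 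Matching the limits with the characterization of $D(\Delta, B_r(x))$ proves (1) and (2).

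For (3), $H^{1,2}$-strong convergence on $B_r(x)$ reduces to energy convergence, i.e.\ to proving
\[
\lim_{i \to \infty} \int_{X_i} |\nabla f_i|^2 \chi_i \, d\mathfrak{m}_i = \int_{X} |\nabla f|^2 \chi \, d\mathfrak{m}
\]
for a fixed cutoff $\chi$ supported in $B_r(x)$. Integration by parts gives
\[
\int |\nabla f_i|^2 \chi_i \, d\mathfrak{m}_i = -\int f_i \Delta_i f_i \chi_i \, d\mathfrak{m}_i - \int f_i \langle \nabla f_i, \nabla \chi_i \rangle \, d\mathfrak{m}_i,
\]
together with the analogous identity in the limit; every term on the right converges by the (weak)$\times$(strong) pairings already established, and energy convergence follows, giving $H^{1,2}$-strong convergence on $B_r(x)$.

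The main obstacle is the stability of the gradient pairing $\int \langle \nabla f_i, \nabla \varphi_i \rangle \, d\mathfrak{m}_i$ when one factor converges only $H^{1,2}$-weakly and the other $H^{1,2}$-strongly. This is not a formal fact: it depends on the Mosco-type convergence of Cheeger energies under pmGH convergence of RCD spaces (see \cite{AH17,GMS15,AST17}), combined with polarization to transfer energy convergence into convergence of the bilinear pairing. Once this ingredient is granted, the remainder is standard bookkeeping with weak/strong convergences, Caccioppoli bounds, and integration by parts.
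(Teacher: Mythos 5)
The paper does not prove Theorem~\ref{AH18}; it is quoted verbatim as ``derived from \cite{AH18}'' (Ambrosio--Honda), so there is no internal proof to compare against. That said, your sketch is a correct and faithful outline of the standard argument that the cited reference actually uses: a Caccioppoli bound obtained by testing the defining identity of $D(\Delta,B_R(x_i))$ against $\chi_i^2 f_i$, $L^2$-weak and $H^{1,2}_{\mathrm{loc}}$-weak compactness, identification of the weak limit of $\Delta_i f_i$ with $\Delta f$ by lifting test functions and passing to the limit in the integration-by-parts identity, and then upgrading weak to strong $H^{1,2}$-convergence via convergence of localized energies. You also correctly pinpoint the non-trivial ingredient: the (weak)$\times$(strong) stability of the pairing $\int \langle \nabla f_i, \nabla \varphi_i\rangle\,\mathrm{d}\mathfrak{m}_i$ under pmGH convergence, which rests on Mosco convergence of the Cheeger energies and the polarization identity, precisely the machinery developed in \cite{AH17,GMS15,AST17}. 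One small point worth making explicit when you flesh out part~(3): the second term $\int f_i \langle \nabla f_i, \nabla \chi_i\rangle\,\mathrm{d}\mathfrak{m}_i$ requires $\langle\nabla f_i,\nabla\chi_i\rangle$ to converge $L^2$-weakly (not merely $L^1$-weakly) before it can be paired with the $L^2$-strongly convergent $f_i$; this holds because $|\nabla\chi_i|$ is uniformly bounded and $|\nabla f_i|$ is uniformly $L^2$-bounded by the Caccioppoli step, so the sequence is $L^2$-bounded and the weak $L^1$-limit upgrades to a weak $L^2$-limit.
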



\subsection{Calculus on RCD$(K,N)$ spaces}

{}{This subsection is aimed at presenting key results on calculus in} RCD$(K,N)$ spaces. For brevity, we omit definitions of $L^p$-one forms, $L^p$-tensor fields of type (0,2) over a Borel set $A\subset X$. These are denoted by $L^p(T^\ast(A,\mathsf{d},\mathfrak{m}))$ and $L^p((T^\ast)^{\otimes 2}(A,\mathsf{d},\mathfrak{m}))$, respectively. We also omit the definition of the pointwise {}{Hilbert-Schmidt norm, written as $|\cdot|_{\mathsf{HS}}$}, for $L^p$-tensor fields. Details can be found in \cite{G18b}. Let $(X,\mathsf{d},\mathfrak{m})$ be an RCD$(K,N)$ space.

\begin{thm}[Exterior derivative]
The linear operator $d$, called the exterior derivative, defined by
\[
\begin{aligned}
d:H^{1,2}&\longrightarrow L^2(T^\ast(X,\mathsf{d},\mathfrak{m}))\\
f&\longmapsto d\,f
\end{aligned}
\]
satisfies $|d\, f|=|\nabla f|$ $\mathfrak{m}$-a.e. for any $f\in H^{1,2}$. Moreover, the set $\{d\, f:f\in H^{1,2}\}$ is dense in $L^2(T^\ast(X,\mathsf{d},\mathfrak{m}))$.
\end{thm}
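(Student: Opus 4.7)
The plan is to follow the construction of the $L^2$-cotangent module developed by Gigli in \cite{G18a,G18b}, which is designed so that the three claims — definition of $d$, pointwise norm identity, and density — are essentially built into the construction. First I would introduce the \emph{pre-cotangent module} $\mathsf{PCM}$ as the collection of formal finite sums $\omega = \sum_{i=1}^{k} \chi_{A_i} df_i$, where $\{A_i\}$ is a Borel partition of $X$ and $f_i \in H^{1,2}$, endowed with the pointwise pseudo-norm $|\omega|_{\ast} := \sum_i \chi_{A_i} |\nabla f_i|$. Well-definedness under common refinement of partitions and substitution of representatives is the first technical step, and it follows from the locality of the minimal relaxed slope recalled in the preliminaries ($|\nabla f| = |\nabla g|$ $\mathfrak{m}$-a.e. on $\{f = g\}$). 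I would then define $L^2(T^{\ast}(X,\mathsf{d},\mathfrak{m}))$ as the completion of the quotient of $\mathsf{PCM}$ by the kernel of $|\cdot|_{\ast}$, with respect to the norm $\|\omega\| := \bigl(\int_X |\omega|_{\ast}^2 \, d\mathfrak{m}\bigr)^{1/2}$, and set $df := [\chi_X \cdot df]$ for $f\in H^{1,2}$.

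The pointwise identity $|df| = |\nabla f|$ $\mathfrak{m}$-a.e. is then immediate from the definition of $|\cdot|_{\ast}$ on $\mathsf{PCM}$ and survives the passage to the quotient and the completion. The main algebraic check is linearity of $d$: additivity $d(f+g) = df + dg$ reduces to verifying that the element $\chi_X d(f+g) - \chi_X df - \chi_X dg$ has vanishing pseudo-norm, which by a polarization argument is equivalent to the parallelogram identity
\[
|\nabla(f+g)|^2 + |\nabla(f-g)|^2 = 2|\nabla f|^2 + 2|\nabla g|^2 \quad \mathfrak{m}\text{-a.e.}
\]
It is precisely here that the infinitesimal Hilbertianity assumption built into the RCD framework is used in an essential way, since it guarantees this identity via the Hilbert structure on $H^{1,2}$.

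The hardest point is the density claim. By construction the pre-cotangent module $\mathsf{PCM}$ is dense in $L^2(T^{\ast}(X,\mathsf{d},\mathfrak{m}))$, so it suffices to approximate each simple form $\omega = \sum_i \chi_{A_i} df_i$ by a single exact form $d\widetilde{f}$ with $\widetilde{f} \in H^{1,2}$. The approach is to choose Lipschitz cutoffs $\varphi_i$ with $\varphi_i \equiv 1$ on a large portion of $A_i$ (using inner regularity of $\mathfrak{m}$ and density of Lipschitz functions), truncate each $f_i$ to a bounded function via the chain rule for the minimal relaxed slope, and set $\widetilde{f} := \sum_i \varphi_i f_i \in H^{1,2}$. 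The Leibniz rule $d(\varphi_i f_i) = \varphi_i \, df_i + f_i \, d\varphi_i$, combined with a careful choice of cutoffs whose gradients have small $L^2$ mass where $f_i$ is supported, allows one to bound $\|d\widetilde{f} - \omega\| < \varepsilon$. The genuine obstacle is controlling the cross-term $\|f_i \, d\varphi_i\|_{L^2(T^\ast)}$, which is handled by alternating between truncation of $f_i$ and refinement of the cutoffs $\varphi_i$; equivalently, this step can be bypassed by noting that the $L^\infty$-module generated by $\{df : f \in H^{1,2}\}$ coincides with the closure of $\mathsf{PCM}$, which is the formulation found in \cite{G18a,G18b}.
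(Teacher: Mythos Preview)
The paper does not prove this theorem; it is quoted as background from Gigli's work \cite{G18a,G18b}, with the surrounding text explicitly deferring all details to those references. Your outline of the construction of the cotangent module --- pre-cotangent module, locality of the minimal relaxed slope, quotient-and-completion, and the parallelogram-identity check for linearity of $d$ under infinitesimal Hilbertianity --- is exactly Gigli's approach and is correct for the definition of $d$ and the pointwise identity $|df|=|\nabla f|$.

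There is, however, a genuine issue with the density claim as literally stated. The assertion that the \emph{set} $\{df : f \in H^{1,2}\}$ is dense in $L^2(T^\ast(X,\mathsf{d},\mathfrak{m}))$ is false in general: on $X=\mathbb{S}^1$ (a compact RCD$(0,1)$ space) every exact form $df$ satisfies $\int_{\mathbb{S}^1} \langle df, d\theta\rangle\,\mathrm{d}\mathscr{H}^1 = 0$, while the volume form $d\theta$ does belong to $L^2(T^\ast)$ via the pre-cotangent module (use local branches of $\theta$ on a two-arc partition), so it lies in the orthogonal complement of the exact forms. The correct statement in \cite{G18a,G18b} is that $\{df : f\in H^{1,2}\}$ \emph{generates} $L^2(T^\ast)$ as an $L^\infty(\mathfrak{m})$-module, i.e.\ the closure of $\mathsf{PCM}$ is the whole space --- which is immediate from your construction. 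Your approximation scheme for passing from $\sum_i \chi_{A_i} df_i$ to a single exact form $d\widetilde f$ therefore cannot be completed: the cross-term $f_i\,d\varphi_i$ you flagged is precisely the obstruction coming from non-trivial first cohomology, and no refinement of cutoffs or truncations removes it. You were right to fall back on the module-generation formulation in your last sentence; that is the actual content of the result, and the paper's phrasing should be read in that sense.
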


\begin{thm}[The Hessian]
For any $f\in \mathrm{Test}F\left({X},\mathsf{d},\mathfrak{m}\right)$, there exists a unique $T\in L^2\left((T^\ast)^{\otimes 2}({X},\mathsf{d},\mathfrak{m})\right)$, called the Hessian of $f$ and denoted by $ \mathop{\mathrm{Hess}}f$, such that for any $f_i\in \mathrm{Test}F\left({X},\mathsf{d},\mathfrak{m}\right)$ $(i=1,2)$,
\[
{}{2T(\nabla f_1,\nabla f_2)= \langle \nabla f_1,\nabla\langle \nabla f_2,\nabla f\rangle\rangle +\langle \nabla f_2,\nabla\langle \nabla f_1,\nabla f\rangle\rangle-\langle \nabla f,\nabla\langle \nabla f_1,\nabla f_2\rangle\rangle }
\]
holds for $\mathfrak{m}$-a.e.~$x\in {X}$.~{}{Moreover, for any $f\in \mathrm{Test}F\left({X},\mathsf{d},\mathfrak{m}\right)$ and any non-negative function $\varphi\in \mathrm{Test}F({X},\mathsf{d},\mathfrak{m})$, the following inequality holds.}

\begin{equation}\label{abc2.14}
\frac{1}{2}\int_{X}  \Delta \varphi \cdot {|\nabla f|}^2\,\mathrm{d}\mathfrak{m}\geqslant \int_{X}\varphi \left({|\mathrm{Hess}\,f|_{\mathsf{HS}}}^2+ \langle \nabla \Delta f,\nabla f\rangle+K{|\nabla f|}^2\right) \mathrm{d}\mathfrak{m}.
\end{equation}

\end{thm}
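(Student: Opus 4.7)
The plan follows the framework developed by Gigli \cite{G18a} and proceeds in three stages: uniqueness, construction, and the improved Bochner bound.

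\textbf{Uniqueness.} Suppose $T_1,T_2\in L^2((T^\ast)^{\otimes 2}(X,\mathsf{d},\mathfrak{m}))$ both satisfy the defining identity for $\mathop{\mathrm{Hess}} f$. Then $(T_1-T_2)(\nabla f_1,\nabla f_2)=0$ $\mathfrak{m}$-a.e.\ for every pair $f_1,f_2\in \mathrm{Test}F(X,\mathsf{d},\mathfrak{m})$. The first theorem of the subsection asserts that $\{df_1:f_1\in H^{1,2}\}$ is dense in $L^2(T^\ast)$; in the RCD setting test functions are dense in $H^{1,2}$ in the appropriate topology, so the simple tensors $df_1\otimes df_2$ generate an $L^\infty$-module that is dense in $L^2((T^\ast)^{\otimes 2})$. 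Consequently $T_1=T_2$, and symmetry of $T$ is automatic from the symmetry of the right-hand side of the defining formula in $(f_1,f_2)$.

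\textbf{Existence.} First observe that the right-hand side of the defining identity is a well-defined $L^1$ function: for $f,f_1,f_2\in\mathrm{Test}F$ the inner products $\langle\nabla f_i,\nabla f\rangle$ and $\langle \nabla f_1,\nabla f_2\rangle$ lie in $H^{1,2}\cap L^\infty$, so one can take their gradients and pair them with $\nabla f_1,\nabla f_2,\nabla f$. Define the candidate bilinear form $T$ on simple tensors $\nabla f_1\otimes \nabla f_2$ by this Koszul-type expression, extended $L^\infty$-linearly to finite sums. The crux is to establish an $L^2$ bound of the form
\[
\left\|\sum_{k}\chi_k\, T(\nabla g_1^{k},\nabla g_2^{k})\right\|_{L^2(\mathfrak{m})}\leqslant C\bigl\|\textstyle\sum_k\chi_k\,\nabla g_1^{k}\otimes\nabla g_2^{k}\bigr\|_{L^2((T^\ast)^{\otimes 2})}
\]
for test cutoffs $\chi_k$. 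This is where the Bochner inequality (the $N=\infty$ case of the RCD axiom, applied to clever linear combinations $\sum_k \chi_k g_1^{k} g_2^{k}$) furnishes a coercive control: expanding $\frac{1}{2}\Delta|\nabla(\sum_k \chi_k g_1^{k} g_2^{k})|^2$ against a nonnegative test $\varphi$ and comparing with the definition of $T$ produces exactly the Hilbert--Schmidt norm of the candidate tensor on the right-hand side. By density, $T$ then extends uniquely to an element of $L^2((T^\ast)^{\otimes 2}(X,\mathsf{d},\mathfrak{m}))$, which we call $\mathop{\mathrm{Hess}}f$.

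\textbf{Improved Bochner.} With the Hessian in hand, repeat the above self-improvement argument but now testing against a single $f\in\mathrm{Test}F$ and nonnegative $\varphi\in\mathrm{Test}F$. The basic Bochner inequality
\[
\frac{1}{2}\int_X |\nabla f|^2\Delta\varphi\,\mathrm{d}\mathfrak{m}\geqslant \int_X\varphi\bigl(\langle\nabla\Delta f,\nabla f\rangle+K|\nabla f|^2\bigr)\mathrm{d}\mathfrak{m}
\]
defines a nonnegative defect measure $\mu_f$. Using the Koszul-type identity defining $\mathop{\mathrm{Hess}}f$ and a Cauchy--Schwarz/polarization applied to test linear combinations $\sum_k h_k\cdot \text{Id}$ (ultimately choosing optimal coefficients), one identifies $\mu_f\geqslant |\mathop{\mathrm{Hess}}f|_{\mathsf{HS}}^2\,\mathfrak{m}$ in the sense of measures, which yields exactly the claimed inequality (\ref{abc2.14}).

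\textbf{Main obstacle.} The genuine difficulty is the $L^2$-bound step in the existence proof: a priori the Koszul formula only defines $T$ as an unbounded pointwise expression on simple tensors, and promoting it to a bona fide $L^2$-tensor field requires an algebraic identity combined with a sharp application of the Bochner inequality to linear combinations of products of test functions. Every other step (uniqueness, symmetry, density of test gradients, identification of the defect measure) is essentially formal once this self-improvement has been carried out.
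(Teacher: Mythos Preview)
The paper does not supply its own proof of this theorem: it is stated in the preliminaries with the explicit disclaimer that ``details can be found in for instance \cite{G18a,G18b}'', i.e.\ it is quoted from Gigli's second-order calculus. Your proposal is precisely a sketch of Gigli's construction, so in that sense you are aligned with the paper's (implicit) approach.

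As a sketch the outline is sound---uniqueness by density of test gradients, existence via a Bochner-driven $L^2$ bound, and self-improvement to the Hessian term---but the existence paragraph is too vague to stand as a proof. The actual mechanism in \cite{G18a} is not to apply Bochner to a single combination like ``$\sum_k \chi_k g_1^k g_2^k$'' but rather to functions of the form $\Phi(f_1,\dots,f_m)$ with $\Phi\in C^\infty(\mathbb{R}^m)$, expand via the chain rule, and then optimize over the first- and second-order coefficients of $\Phi$ to isolate the Hilbert--Schmidt norm of the candidate Hessian. Your ``clever linear combinations'' and ``optimal coefficients'' gesture at this, but the algebraic identity that makes the square complete (and hence gives the $L^2$ bound with constant exactly $1$ on the Hessian term) is the substantive content and should be made explicit if you intend this as more than a roadmap.
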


\begin{remark}\label{rmk2.16}
By the density of $\mathrm{Test}F({X},\mathsf{d},\mathfrak{m})$ in $D(\Delta)$, the Hessian $\mathop{\mathrm{Hess}}f$ is well-defined and belongs to $ L^2\left((T^\ast)^{\otimes 2}({X},\mathsf{d},\mathfrak{m})\right)$ for any $f\in D(\Delta)$. Furthermore, if $f_i\in D(\Delta)\cap \mathrm{Lip}(X,\mathsf{d})$ $(i=1,2)$, then we have $\langle \nabla f_1,\nabla f_2 \rangle\in H^{1,2}$ and $\mathrm{Hess}\, f_1=\mathrm{Hess}\, f_2$ $\mathfrak{m}$-a.e. on $\{f_1=f_2\}$. Thus for any $
\varphi\in H^{1,2}$, the following holds.
\begin{equation}\label{11eqn2.16}
\langle \nabla \varphi, \nabla \langle \nabla f_1,\nabla f_2 \rangle \rangle= \mathop{\mathrm{Hess}}f_1\left(\nabla f_2,\nabla\varphi\right)+ \mathop{\mathrm{Hess}}f_2\left(\nabla f_1,\nabla\varphi\right) \ \ \mathfrak{m}\text{-a.e.}
\end{equation}
Even for functions $f\in D(\Delta,B_{2r}(x))$, the Hessian on $B_{r}(x)$ can be interpreted as $\mathrm{Hess}\,(\varphi f)$, where $\varphi$ is a non-negative cut-off function in $\mathrm{Test}F(X,\mathsf{d},\mathfrak{m})$ such that $\varphi=1$ on $B_r(x)$, $\varphi=0$ outside $ B_{2r}(x)$, and that $r|\Delta \varphi|+r^2|\nabla \varphi|\leqslant C(K,N)$ $\mathfrak{m}$-a.e. on $B_{2r}(x)$. For details, see \cite[Thm. 6.7]{AMS14}, \cite{G18b} and \cite[Lem. 3.1]{MN14}.

\end{remark}
 
\begin{thm}[Canonical Riemannian metric \cite{AHPT21,GP22}]\label{thm2.17fff}
There exists a unique Riemannian metric $\mathrm{g}\in L^\infty((T^\ast)^{\otimes 2}(X,\mathsf{d},\mathfrak{m}))$ such that $|\mathrm{g}|_{\mathsf{HS}}=\sqrt{n}$ $\mathfrak{m}$-a.e. and that for any $f_1, f_2\in H^{1,2}
$ it holds
\[
\mathrm{g}(\nabla f_1,\nabla f_2)=\langle \nabla f_1,\nabla f_2\rangle,\ \mathfrak{m}\text{-a.e.}
\]
\end{thm}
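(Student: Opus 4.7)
The plan is to establish uniqueness directly from the density of differentials, and then to construct $\mathrm{g}$ locally using a measurable orthonormal frame provided by Bru\`e--Semola's dimension theorem (Theorem \ref{BS}), gluing the local constructions via the invariance of $\sum e^i\otimes e^i$ under orthogonal change of frame.

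For uniqueness, suppose $\mathrm{g}_1,\mathrm{g}_2\in L^\infty((T^*)^{\otimes 2}(X,\mathsf{d},\mathfrak{m}))$ both satisfy the stated conditions. Set $T:=\mathrm{g}_1-\mathrm{g}_2$. Then $T(\nabla f_1,\nabla f_2)=0$ $\mathfrak{m}$-a.e.\ for every $f_1,f_2\in H^{1,2}$. The Exterior derivative theorem quoted above tells us that $\{d\,f: f\in H^{1,2}\}$ is dense in $L^2(T^*(X,\mathsf{d},\mathfrak{m}))$, so by bilinearity and an $L^\infty$-linearity argument (multiplying by test functions and using the Leibniz rule) the pairing extends to all cotangent elements. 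Hence $T$ vanishes identically as an element of the $L^\infty$-module of $(0,2)$-tensor fields, proving $\mathrm{g}_1=\mathrm{g}_2$.

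For existence, by Theorem \ref{BS} the reference measure is concentrated on $\mathcal{R}_n$. Locally near a density point of $\mathcal{R}_n^\ast$, one can choose test functions $b_1,\ldots,b_n\in\mathrm{Test}F(X,\mathsf{d},\mathfrak{m})$ whose differentials are $\mathfrak{m}$-a.e.\ linearly independent on a Borel set of positive measure (for instance distance functions from a suitable $\delta$-splitting map, as produced in the Mondino--Naber rectifiability argument). A Gram--Schmidt procedure applied pointwise to $\{\nabla b_1,\ldots,\nabla b_n\}$ produces a local orthonormal frame $\{e_1,\ldots,e_n\}$ of the tangent $L^2$-module over this Borel set, with dual coframe $\{e^1,\ldots,e^n\}$. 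Choosing such a frame on a countable Borel partition $\{E_\alpha\}$ of $\mathcal{R}_n^\ast$ (up to a null set), one defines
\[
\mathrm{g}:=\sum_{i=1}^n e^i\otimes e^i \qquad \text{on each } E_\alpha.
\]
Orthogonal invariance of $\sum_i e^i\otimes e^i$ ensures the definition is independent of the chosen orthonormal frame, so the local definitions glue into a well-defined global tensor field. For any $f_1,f_2\in H^{1,2}$ one has $\nabla f_j=\sum_i\langle\nabla f_j,e_i\rangle\, e_i$ and therefore
\[
\mathrm{g}(\nabla f_1,\nabla f_2)=\sum_{i=1}^n \langle\nabla f_1,e_i\rangle\langle\nabla f_2,e_i\rangle=\langle\nabla f_1,\nabla f_2\rangle
\]
$\mathfrak{m}$-a.e., while $|\mathrm{g}|_{\mathsf{HS}}^2=\sum_{i,j}\delta_{ij}^2=n$ in any such frame, giving $\mathrm{g}\in L^\infty((T^*)^{\otimes 2})$.

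The main obstacle is producing the measurable orthonormal frame in a $\mathfrak{m}$-a.e.\ global and consistent way: one must invoke the rectifiable structure of $\mathcal{R}_n^\ast$ (and the constant-rank property of the tangent module of Bru\`e--Semola/Gigli--Pasqualetto) to show that the local Gram--Schmidt outputs assemble into a measurable frame on all of $X$ up to an $\mathfrak{m}$-null set. Once that structural input is in place, the rest of the argument is the largely algebraic verification sketched above, which is why we can freely invoke this result as Theorem \ref{AHPT21,GP22} in the sequel.
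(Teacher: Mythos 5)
The paper does not actually prove this theorem; it is stated with a citation to \cite{AHPT21,GP22} and used as a black box. Your reconstruction is correct and is in essence the standard argument appearing in those references: uniqueness from the $L^\infty$-module density of $\{d f : f\in H^{1,2}\}$ in $L^2(T^*X)$, and existence from a measurable local orthonormal frame (constant pointwise dimension $n$ of the cotangent module, which is precisely the Bru\`e--Semola / Gigli--Pasqualetto structural input) glued together via the orthogonal invariance of $\sum_i e^i\otimes e^i$.

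One small point worth tightening, though it does not affect correctness: in the uniqueness step, the passage from $T(\nabla f_1,\nabla f_2)=0$ for all $f_1,f_2\in H^{1,2}$ to $T\equiv 0$ should explicitly record that $T$ is $L^\infty$-bilinear and hence continuous as a pairing $L^2(T^*X)\times L^2(T^*X)\to L^1(\mathfrak{m})$; density of finite $L^\infty$-combinations of exact forms then gives $T(v,w)=0$ for all $v,w\in L^2(T^*X)$, forcing $T=0$ in $L^\infty((T^*)^{\otimes 2})$. In the existence step, the statement that the Gram--Schmidt coefficients are Borel functions (so that the resulting frame is a genuine measurable frame of the module over each $E_\alpha$) deserves at least a sentence; this is indeed routine once one knows the maps $x\mapsto\langle\nabla b_i,\nabla b_j\rangle(x)$ are Borel and the Gram matrix is invertible a.e.\ on $E_\alpha$, and it is precisely the point where the Mondino--Naber coordinate charts are used. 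With those two clarifications your argument is a complete proof of the cited theorem.
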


We are now prepared to present a recent result in \cite[Thm. 1.10]{H23} to conclude this subsection. This theorem establishes that compact non-collapsed RCD$(K,n)$ spaces equipped with isometrically immersing eigenmaps are, in fact, smooth.

\begin{thm}\label{H22}
Let $(X,\mathsf{d},\mathscr{H}^n)$ be a compact non-collapsed $\mathrm{RCD}(K,n)$ space and {}{$\mathrm{g}$} be the canonical Riemannian metric on it. If there exist a finite number of non-constant eigenfunctions $\{\phi_i\}_{i=1}^m$ such that
\[
\mathrm{g}=\sum\limits_{i=1}^m d\,\phi_i\otimes d\,\phi_i ,
\]
then $(X,\mathsf{d})$ is isometric to an $n$-dimensional smooth closed Riemannian manifold $\left(M^n,\mathrm{g}\right)$.
\end{thm}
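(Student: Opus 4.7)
The plan is to exhibit the map $\Phi := (\phi_1, \ldots, \phi_m) : X \to \mathbb{R}^m$ as an infinitesimal isometry whose differential realizes, pointwise on the regular set, a linear isometric embedding of the tangent space into $\mathbb{R}^m$, and then to show that each point of $X$ admits a neighborhood on which $\Phi$ induces a smooth Riemannian chart. Patching these charts yields the required smooth closed manifold structure.

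First I would collect regularity and infinitesimal rigidity. Each $\phi_i$ is Lipschitz by the standard gradient estimate for eigenfunctions on $\mathrm{RCD}(K,N)$ spaces, has $\mathrm{Hess}\,\phi_i\in L^2$ via Remark \ref{rmk2.16}, and belongs to $D(\Delta^k)$ for every $k\in\mathbb{N}$ after iterating the eigenvalue equation $\Delta\phi_i=-\lambda_i\phi_i$ against (\ref{eeeeeeqn2.5}). At any $x_0\in \mathcal{R}_n^\ast$, the hypothesis $\mathrm{g}=\sum d\phi_i\otimes d\phi_i$ forces $D\Phi|_{x_0}:T_{x_0}X\cong\mathbb{R}^n\to\mathbb{R}^m$ to be a linear isometry onto an $n$-plane, so a pigeonhole argument selects indices $i_1,\ldots,i_n$ for which $\Psi=(\phi_{i_1},\ldots,\phi_{i_n})$ is a bi-Lipschitz homeomorphism from a neighborhood $U$ of $x_0$ onto an open set in $\mathbb{R}^n$. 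Because $\mathrm{g}|_U$ coincides with the pullback of the Euclidean metric under $\Phi|_U$, its coefficients in the $\Psi$-coordinates are the Gram products of $d\phi_{i_1},\ldots,d\phi_{i_n}$; classical elliptic regularity applied to each remaining $\phi_j$, viewed in the $\Psi$-chart as a solution of a strictly elliptic equation, bootstraps $\phi_j$ and hence $\mathrm{g}|_U$ up to $C^\infty$. Thus a neighborhood of every regular point is smoothly Riemannian via $\Phi$.

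The main obstacle is to show that the singular set $X\setminus \mathcal{R}_n$ is empty, so that these local smooth charts cover $X$ and glue to a global smooth Riemannian structure. I would argue by contradiction using a blow-up: for $x_0\notin \mathcal{R}_n$, take any tangent cone $(Y,\mathsf{d}_Y,\mathfrak{m}_Y,y_0)\in \mathrm{Tan}(X,\mathsf{d},\mathscr{H}^n,x_0)$ along $r_k\downarrow 0$. By the non-collapsed assumption and Theorem \ref{DG18cor1.7}, $(Y,\mathsf{d}_Y,\mathscr{H}^n)$ is a non-trivial metric cone strictly different from $\mathbb{R}^n$. Applying Theorem \ref{AH18} and Theorem \ref{AAthm} to the renormalized eigenfunctions $(\phi_i-\phi_i(x_0))/r_k$ extracts harmonic limit functions $\tilde\phi_i$ on $Y$, and the infinitesimal-isometry identity $\mathrm{g}_Y=\sum d\tilde\phi_i\otimes d\tilde\phi_i$ is preserved under the rescaling; in particular the resulting linearized immersion has rank $n$ at $y_0$. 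Because $Y$ is a metric cone, each radial geodesic from $y_0$ must then be mapped by the limit map to a straight ray of the same length in $\mathbb{R}^m$, which forces $Y$ to be isometric to $(\mathbb{R}^n,\mathsf{d}_{\mathbb{R}^n})$ and contradicts $x_0\notin \mathcal{R}_n$. Hence $X=\mathcal{R}_n^\ast$, the local smooth charts constructed above have transition maps determined by $\mathrm{g}$ and are therefore smooth, and they glue to an $n$-dimensional smooth closed Riemannian manifold isometric to $(X,\mathsf{d})$.
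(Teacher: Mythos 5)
Since Theorem \ref{H22} is cited from \cite[Thm.~1.10]{H23} and the present paper does not reproduce its proof, I can only evaluate your proposal on its own merits. The high-level architecture you propose (charts from selected eigenfunctions near regular points, elliptic bootstrap, blow-up to rule out singular points) is the correct one, and it is indeed the strategy in the reference, but several of the steps you present as immediate are in fact the substance of the theorem and cannot be waved through.

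First, the claim that at $x_0\in\mathcal{R}_n^\ast$ ``a pigeonhole argument selects indices'' producing a bi-Lipschitz homeomorphism on a neighborhood is a genuine gap. The identity $\mathrm{g}=\sum d\phi_i\otimes d\phi_i$ holds only $\mathscr{H}^n$-a.e., and knowing that some selection of $n$ of the $d\phi_i$ is orthonormal at a single Lebesgue point does not yield bi-Lipschitz control on a whole neighborhood. Going from an a.e.\ pointwise algebraic identity to an actual local bi-Lipschitz chart requires quantitative tools (Hessian decay at scales, $\epsilon$-regularity, a Reifenberg-type argument, or harmonic-coordinate constructions as in \cite{BMS23}); nothing in what you wrote supplies this, and in the non-smooth setting this step is where most of the work lives. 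Second, the elliptic bootstrap you invoke needs care: in the $\Psi$-chart the metric coefficients $g^{ij}=\langle\nabla\phi_{i_k},\nabla\phi_{i_l}\rangle$ are a priori only in $L^\infty$ (indeed only in $W^{1,2}\cap L^\infty$ via the $L^2$-Hessian bound), and with merely $L^\infty$ coefficients De Giorgi--Nash--Moser gives $C^{1,\alpha}$ for solutions, not $C^\infty$. One can run a self-improving loop (regularity of $\phi_j$ improves $g^{ij}$, which improves regularity of $\phi_j$, etc.), but you have to explain why it converges; ``classical elliptic regularity'' as stated is not a proof.

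Third, and most clearly, the mechanism you cite for the blow-up rigidity is wrong. An isometric immersion of a metric cone into $\mathbb{R}^m$ does \emph{not} send radial geodesics to straight rays (an isometric embedding of a round sphere into $\mathbb{R}^3$ does not send great circles to lines). The correct rigidity comes from the structure of linear-growth harmonic functions on RCD cones: on a cone $C(Z)$ over a cross-section with $\mathrm{Ric}_Z\geqslant n-2$, linear-growth harmonic functions vanishing at the vertex correspond to eigenfunctions of $Z$ with eigenvalue $n-1$, and the Lichnerowicz--Obata rigidity bounds the multiplicity of that eigenvalue unless $Z=\mathbb{S}^{n-1}$. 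The hypothesis $\mathrm{g}_Y=\sum d\tilde\phi_i\otimes d\tilde\phi_i$ then forces $n$-dimensional span of such functions, hence $Z=\mathbb{S}^{n-1}$ and $Y=\mathbb{R}^n$. That is the argument you need; the geodesic-to-ray picture does not hold. Until these three points are filled in, the proof is a sketch of the strategy rather than a proof.
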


This theorem played an important role in proving the smoothness of compact isometrically heat kernel immersing RCD$(K,N)$ spaces as demonstrated in \cite{H23}. By ``isometrically heat kernel immersing" we mean that there exists a function $t\mapsto c(t)$ on $(0,\infty)$ such that for each $t > 0$, the function $x \mapsto (y \mapsto \sqrt{c(t)}\, \rho(x,y,t))$ realizes an isometric immersion into $L^2$.

\section{Symmetric and harmonic RCD$(K,N)$ spaces}\label{sec3}
This section is dedicated to proving Thms. \ref{mainthm2} and \ref{mainthm1}. The proof of Thm. \ref{mainthm1} relies on key estimates for eigenvalues and eigenfunctions, as well as the properties of the heat kernel, which are presented below.  
\begin{prop}[\cite{AHPT21,ZZ19}]\label{prop3.1}
Let $(X, \mathsf{d}, \mathfrak{m})$ be a compact $\mathrm{RCD}(K, N)$ space with $\mathfrak{m}(X)=1$. Let $0=\mu_0<\mu_1\leqslant \mu_2\leqslant \cdots \rightarrow \infty$ be all its eigenvalues counted with multiplicities and $\{\phi_i\}_{i=0}^\infty$ be the corresponding eigenfunctions, which form an $L^2$-orthonormal basis. Then, there exists a constant $C= C(K,N,\mathrm{diam}(X, \mathsf{d}))$, such that for all $i \geqslant 1$ we have
\[
\|\phi_i\|_{L^\infty}\leqslant C\,\mu_i^{\frac{N}{4}},\ \| \nabla \phi_i \|_{L^\infty}\leqslant C\,\mu_i^{\frac{N+2}{4}},\ C^{-1} \,i^{\frac{2}{N}}\leqslant \mu_i\leqslant C\,i^2.
\]
\end{prop}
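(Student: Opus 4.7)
The plan is to derive all three estimates from the heat-kernel machinery of Theorem~\ref{JLZ}, combined with Bishop--Gromov (Theorem~\ref{BGineq}) and the Bakry--\'{E}mery gradient bound for the heat semigroup on $\mathrm{RCD}(K,\infty)$ spaces. Compactness of $X$ together with $\mathfrak{m}(X)=1$ and Bishop--Gromov yield a uniform lower bound $\mathfrak{m}(B_r(x)) \geqslant c(K,N,\mathrm{diam}(X))\, r^N$ for $r \in (0,\mathrm{diam}(X)]$, which feeds into Theorem~\ref{JLZ} to give $\rho(x,x,2t) \leqslant C\, t^{-N/2}$ for $t \in (0,1]$. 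The $L^\infty$ bound on $\varphi_i$ then comes from the eigenfunction identity $\mathrm{h}_t \varphi_i = e^{-\lambda_i t}\varphi_i$: writing $|\varphi_i(x)|\leqslant e^{\lambda_i t}\int_X \rho(x,y,t)|\varphi_i(y)|\,\mathrm{d}\mathfrak{m}(y)$, applying Cauchy--Schwarz together with the semigroup identity $\int_X \rho(x,y,t)^2\,\mathrm{d}\mathfrak{m}(y)=\rho(x,x,2t)$ and $\|\varphi_i\|_{L^2}=1$, and finally optimizing at $t = 1/\lambda_i$, we obtain $\|\varphi_i\|_{L^\infty}\leqslant C\lambda_i^{N/4}$. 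For the gradient estimate, the Bakry--\'{E}mery bound $|\nabla \mathrm{h}_t f|^2 \leqslant e^{-2Kt}\mathrm{h}_t(|\nabla f|^2)$ applied to $f=\varphi_i$ gives $|\nabla \varphi_i|^2(x) \leqslant e^{2(\lambda_i-K)t}\mathrm{h}_t(|\nabla \varphi_i|^2)(x)$; combining it with $\mathrm{h}_t g(x)\leqslant \|\rho(x,\cdot,t)\|_{L^\infty}\|g\|_{L^1}$, the sup-norm bound $\|\rho(x,\cdot,t)\|_{L^\infty}\leqslant C t^{-N/2}$, and $\|\,|\nabla \varphi_i|^2\|_{L^1}=\lambda_i$, and again choosing $t=1/\lambda_i$, yields $\|\,|\nabla \varphi_i|\,\|_{L^\infty}\leqslant C\lambda_i^{(N+2)/4}$.

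For the two-sided bound on $\lambda_i$, I would start from the trace identity $\sum_{k\geqslant 0} e^{-\lambda_k t} = \int_X \rho(x,x,t)\,\mathrm{d}\mathfrak{m}(x) \leqslant C t^{-N/2}$. Dropping all but the $i$-th term gives $i\,e^{-\lambda_i t}\leqslant C t^{-N/2}$, and optimizing in $t$ produces $\lambda_i \geqslant C^{-1}i^{2/N}$. The upper bound $\lambda_i\leqslant Ci^2$ follows from the min--max principle applied to a carefully chosen family of test functions: pick a maximal $r$-separated net in $X$ with $r\asymp \mathrm{diam}(X)/i$; the doubling inequality from Bishop--Gromov guarantees such a net contains at least $i+1$ points $x_0,\ldots,x_i$ whose $r/2$-balls are disjoint. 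On each such ball erect a Lipschitz bump $\psi_j\in\mathrm{Lip}_c(X,\mathsf{d})$ equal to $1$ on $B_{r/4}(x_j)$, supported in $B_{r/2}(x_j)$, with $\mathrm{Lip}\,\psi_j\leqslant C/r$. Each Rayleigh quotient $\int_X|\nabla\psi_j|^2\,\mathrm{d}\mathfrak{m}/\int_X\psi_j^2\,\mathrm{d}\mathfrak{m}$ is then at most $C/r^2\leqslant C i^2$, so the min--max characterization of $\lambda_i$ gives $\lambda_i\leqslant C i^2$.

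The most delicate step is the gradient bound: to land on the sharp exponent $(N+2)/4$, one must feed the Bakry--\'{E}mery inequality into the pointwise $L^\infty$ bound on $\rho(x,\cdot,t)$ rather than an $L^2$-average of it, so there is no slack for loss in the optimization. The Weyl-type upper bound is also not entirely mechanical, since one has to verify, using the volume lower bound and the doubling property from Bishop--Gromov, that an $r$-packing of size at least $i+1$ really does exist at scale $r\asymp \mathrm{diam}(X)/i$; by contrast the $L^\infty$ bound and the lower Weyl bound are essentially direct consequences of the Gaussian heat kernel estimate of Theorem~\ref{JLZ}.
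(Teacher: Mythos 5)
The paper does not prove Proposition~\ref{prop3.1}; it cites the appendix of \cite{AHPT21} and \cite{ZZ19} for it. Your proof is a correct, essentially self-contained account of the standard heat-kernel argument those references use, so substantively you are on the paper's intended track. The key steps are all sound: the on-diagonal bound $\rho(x,x,2t)\leqslant Ct^{-N/2}$ for $t\in(0,1]$ follows from Theorem~\ref{JLZ} once Bishop--Gromov and $\mathfrak{m}(X)=1$ give $\mathfrak{m}(B_{\sqrt{t}}(x))\geqslant c(K,N,\mathrm{diam})\,t^{N/2}$; Cauchy--Schwarz together with $\int_X\rho(x,y,t)^2\,\mathrm{d}\mathfrak{m}(y)=\rho(x,x,2t)$ and $t=1/\lambda_i$ gives the sup bound; Bakry--\'Emery with $\|\,|\nabla\varphi_i|^2\|_{L^1}=\lambda_i$ and the same choice of $t$ gives the gradient bound; and the trace inequality $\sum_k e^{-\lambda_k t}\leqslant Ct^{-N/2}$ with $t\asymp i^{-2/N}$ gives the lower Weyl estimate. (Throughout, one should quietly restrict $t$ to a fixed bounded range so the $e^{C_2t}$ factor in (\ref{JLZineq}) is harmless, and absorb the finitely many small $i$ into the constant; these are routine.)

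The one place where your justification slips is in the upper Weyl bound. You say that doubling from Bishop--Gromov ``guarantees such a net contains at least $i+1$ points.'' Doubling is not what provides this: doubling controls ratios of measures of concentric balls, but gives no lower bound on the cardinality of a maximal $r$-separated set, and in fact in a general RCD$(K,N)$ space with $\mathfrak{m}(X)=1$ one has only a volume \emph{lower} bound $\mathfrak{m}(B_r)\geqslant cr^N$ from Bishop--Gromov (the upper bound $\mathfrak{m}(B_r)\lesssim r^N$ that a volume-counting argument would need is false when the space is collapsed). What actually produces $i+1$ pairwise $r$-separated points at scale $r\asymp\mathrm{diam}(X)/i$ is simply that $(X,\mathsf{d})$ is a geodesic space: pick a unit-speed geodesic $\gamma$ realizing (or nearly realizing) the diameter $D$ and set $z_j=\gamma(jD/i)$ for $j=0,\dots,i$; these are pairwise $D/i$-separated, so the balls $B_{D/(3i)}(z_j)$ are disjoint. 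Doubling \emph{is} needed, but elsewhere: to bound each Rayleigh quotient by $C/r^2$ you compare $\mathfrak{m}(B_{r/2}(z_j))$ (an upper bound for $\int|\nabla\psi_j|^2$ up to $r^{-2}$) with $\mathfrak{m}(B_{r/4}(z_j))$ (a lower bound for $\int\psi_j^2$), and Bishop--Gromov doubling makes this ratio a constant $C(K,N,D)$. With that repair the min--max argument closes as you describe and even yields the sharper $\lambda_i\leqslant Ci^{2/N}$ if one instead takes $r\asymp i^{-1/N}$, which is a strictly stronger statement than the claimed $\lambda_i\leqslant Ci^2$.
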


\begin{remark}\label{rmk3.2}
{}{Under the assumptions of Prop. \ref{prop3.1}, by \cite{J14, JLZ16} and \cite[Appendix]{AHPT21}, the heat kernel $\rho$ of $(X,\mathsf{d},\mathfrak{m})$ admits the spectral representation
\[
\rho(x,y,t) = \sum_{i=0}^\infty \exp(-\mu_i t)\phi_i(x)\phi_i(y).
\]}
\end{remark}

\subsection{Smoothness of radially symmetric spaces}
Let $(X,\mathsf{d},\mathscr{H}^n)$ be a non-collapsed radially symmetric RCD$(K,n)$ space and let $\{\phi_i\}_{i=1}^m$ be non-constant eigenfunctions satisfying (\ref{eqn1.2}):
\begin{equation}\label{1.2c}
\sum_{i=1}^m \phi_i(x)\phi_i(y)=F(\mathsf{d}\,(x,y)),\, \forall x,y\in X.
\end{equation}
Here, $F:[0,\infty)\to\mathbb{R}$ is a fixed function, and $\mu_i$ denotes the eigenvalue corresponding to $\phi_i$. Let $C$ denote a constant depending on $K,m,n$,  $\mathrm{diam}({X},\mathsf{d}),\mathscr{H}^n({X})$, $\mu_1 \ldots,\mu_m$, $\left\|\phi_1\right\|_{L^2},\ldots,\left\|\phi_m\right\|_{L^2}.$

\begin{proof}[Proof of Thm. \ref{mainthm1}] 
Setting $x=y$ in (\ref{1.2c}) yields: 
\[
\sum\limits_{i=1}^m {\phi_i}^2(x)=F(0),\ \forall x\in X.
\]
From this, we obtain
\begin{equation}\label{eqn3.3}
\sum\limits_{i=1}^m {(\phi_i(x)-\phi_i(y))}^2=2\,\big(F(0)-F(\mathsf{d}\left(x,y\right))\big),\ \forall x,y\in X.
\end{equation}
Combining (\ref{eqn3.3}) with Prop. \ref{prop3.1}, we deduce
 \begin{equation}\label{eqn3.1}
\sup_{x\neq y} \left|\frac{F(0)-F(\mathsf{d}\left(x,y\right))}{{\mathsf{d}}^2\left(x,y\right)}\right|\leqslant C.
\end{equation}
 
 Fix a point
\[
x_0\in \mathcal{R}_n\cap \bigcap\limits_{i,j=1}^m \mathrm{Leb}(\langle\nabla \phi_i,\nabla\phi_j\rangle),
\]
where $\mathrm{Leb}(\langle\nabla \phi_i,\nabla\phi_j\rangle)$ denotes the set of Lebesgue points of $\langle\nabla \phi_i,\nabla\phi_j\rangle$.

 We claim
 \begin{equation}\label{eqn3.4}
 \liminf_{r\rightarrow 0} {r}^{-2}(F(0)-F(r))>0.
 \end{equation}

Assume the contrary, i.e. there exists $r_l\to 0$ such that ${r_l}^{-2}(F(0)-F(r_l))\to 0$ as $l\to \infty$. Since $x_0$ is a regular point, we have
 \[
 (X_l,\mathsf{d}_l,\mathscr{H}^n,x_0):=\left(X,{r_l}^{-1}\mathsf{d},\mathscr{H}^n,x_0\right)\xrightarrow{\mathrm{pmGH}} \left(\mathbb{R}^n,\mathsf{d}_{\mathbb{R}^n},\mathscr{L}^n,0_n\right).
 \]

For convenience, we denote the gradient and the Laplacian on $(X_l,\mathsf{d}_l,\mathscr{H}^n)$ by $\Delta_l,\nabla_l$ respectively. Additionally,  we use $B_r^l(x_0)$ to represent the ball $B_r^{X_l}(x_0)$. For each $i\in \{1,\cdots,m\}$ and $l\in \mathbb{N}$, let us set $\varphi_{i,l}:={r_l}^{-1}(\phi_i-\phi_i(x_0))$. Then we have
\[
|\nabla_l \,\varphi_{i,l}|=|\nabla \phi_i|,\  \Delta_l\,\varphi_{i,l} =r_l\,\Delta\, \phi_i=-r_l\, \mu_i \,\phi_i.
\]
Particularly, the forthcoming estimates are straightforwardly extracted from Prop. \ref{prop3.1}:
\begin{equation}\label{eqn3.5}
\left\|\nabla_l\, \varphi_{i,l}\right\|_{L^\infty(X_l)}\leqslant C;\ \ \left\|\Delta_l \,\varphi_{i,l}\right\|_{L^\infty(X_l)}\leqslant C\,r_l\rightarrow 0\ \ \text{as }l\rightarrow \infty.
\end{equation}

Recall that any harmonic function with bounded gradient on $\mathbb{R}^n$ must be linear. According to (\ref{eqn3.5}), Thms. \ref{AAthm} and \ref{AH18}, for each $i=1,\ldots,m$, the sequence $\{\varphi_{i,l}\}_l$ uniformly converges to a linear function $\varphi_i$ on {}{$B_2(0_n)\subset \mathbb{R}^n$}. Therefore, for each $i,j$, since $x_0\in  \mathrm{Leb}(\langle\nabla \phi_i,\nabla\phi_j\rangle)$, we have
\[
\begin{aligned}
\langle\nabla\phi_i,\nabla \phi_j\rangle(x_0)&=\lim_{r\downarrow 0}\fint_{B_r(x_0)}\langle\nabla \phi_i,\nabla \phi_j\rangle\mathop{\mathrm{d}\mathscr{H}^n}\\
\ &=\lim_{l\rightarrow \infty}\fint_{B^l_1(x_0)}\langle\nabla_l\, \varphi_{i,l},\nabla_l\, \varphi_{j,l}\rangle \mathop{\mathrm{d}\mathscr{H}^n}=\langle\nabla^{\mathbb{R}^n} \varphi_i,\nabla^{\mathbb{R}^n} \varphi_j\rangle.
\end{aligned}
\]

For any $\vec{y}\in \partial B_1(0_n)\subset \mathbb{R}^n$, by taking $y_l\in \partial B_{r_l}(x_0)$ such that $X_l\ni y_l\rightarrow \vec{y}$, we observe from (\ref{eqn3.3}) that
\[
\begin{aligned}
{}{2}\lim_{l\rightarrow \infty} \frac{F(0)-F\left(\mathsf{d}\left(x_0,y_l\right)\right)}{\mathsf{d}^2\left(x_0,y_l\right)}&=\lim_{l\rightarrow \infty}\left(\frac{r_l}{\mathsf{d}\left(x_0,y_l\right)}\right)^2\sum\limits_{i=1}^m \left(\frac{\phi_i(x_0)-\phi_i(y_l)}{r_l}\right)^2=\sum\limits_{i=1}^m {\varphi_i}^2(\vec{y})=0.
\end{aligned}
\]
As a result, $|\nabla^{\mathbb{R}^n} \varphi_i|\equiv|\nabla \phi_i|(x_0)=0$ $(i=1,\ldots,m)$.  This contradicts the non-degeneracy of $\phi_i$, as $x_0$ is arbitrary outside an $\mathscr{H}^n$-negligible set. Therefore we complete the proof of (\ref{eqn3.4}). 

Indeed, from the above discussion we know 
\begin{align}\label{faewoihfioeawhfoiaewhfoiea}
\sum_{i=1}^n {\varphi_i}^2(\vec{y})=c|\vec{y}|^2, \forall \vec{y}\in\mathbb{R}^n,\ \text{ where }c={2}\lim_{l\rightarrow \infty} \frac{F(0)-F\left(\mathsf{d}\left(x_0,y_l\right)\right)}{\mathsf{d}^2\left(x_0,y_l\right)}.
\end{align}
Taking Laplacian of both sides then implies $\sum_i {|\nabla^{\mathbb{R}^n} \varphi_i|}^2=\sum_i {|\nabla\phi_i|}^2(x_0)=nc$. As a result, we get
\begin{align}\label{faewiopfhjioaewhjfoipaehfoa}
{}{c}=2\,\lim_{r \downarrow 0} \frac{F(0)-F(r)}{r^2}={}{\sum\limits_{i=1}^m |\nabla \phi_i|^2(x)}, \, \mathscr{H}^n\text{-a.e.}\, x\in X.
\end{align}
\,

We assert: \begin{equation}\label{eqn3.8}
\left|\sum\limits_{i=1}^m d\,\phi_i\otimes d\,\phi_i-c\,\mathrm{g}\right|_\mathsf{HS}=0,\ \mathscr{H}^n\text{-a.e.},
\end{equation}
where $\mathrm{g}$ is the canonical Riemannian metric on $(X,\mathsf{d},\mathscr{H}^n)$. 

{}{Let $G:=\{x\in X:|\mathrm{g}|_\mathsf{HS}(x)=\sqrt{n}\}$, which satisfies $\mathscr{H}^n(X\setminus G)=0$ by Thm. \ref{thm2.17fff}.} Take
\[
x_0\in \bigcap\limits_{i,j=1}^m\mathrm{Leb}\big{(}(\langle\nabla \phi_i,\nabla \phi_j\rangle)^2\big{)}\cap \bigcap\limits_{i,j=1}^m \mathrm{Leb}(\langle\nabla \phi_i,\nabla\phi_j\rangle)\cap \mathcal{R}_n\cap G,
\] 
and use the notations as above. By (\ref{faewoihfioeawhfoiaewhfoiea}) and (\ref{faewiopfhjioaewhjfoipaehfoa}), $c\,\mathrm{g}_{\mathbb{R}^n}=\sum_i d\,\varphi_i\otimes d\,\varphi_i$. The $H^{1,2}$-strong convergence of $\{\varphi_{i,l}\}_l$ for each $i$ on {}{$B_2(0_n)\subset\mathbb{R}^n$} (Thm. \ref{AH18}) as well as (\ref{eqn3.5}) and Thm. \ref{AH17} implies that{}{
\[
\begin{aligned}
&\left|c\,\mathrm{g}-\sum\limits_{i=1}^m d\,\phi_i\otimes d\,\phi_i\right|_\mathsf{HS}^2(x_0)=nc^2+\sum_{i,j}(\langle\nabla\phi_i,\nabla \phi_j\rangle(x_0))^2-2c\sum_i |\nabla \phi_i|^2(x_0)\\
=\ &nc^2+\lim_{r \rightarrow 0}\fint_{B_r(x_0)}\left(\sum_{i,j}(\langle\nabla\phi_i,\nabla \phi_j\rangle)^2-2c\sum_i |\nabla \phi_i|^2\right)\mathrm{d}\mathscr{H}^n\\
=\ &nc^2+\lim_{l \rightarrow \infty}\fint_{B_1^l(x_0)}\left(\sum_{i,j}(\langle\nabla_l\varphi_{i,l},\nabla_l \varphi_{j,l}\rangle)^2-2c\sum_i |\nabla_l \varphi_{i,l}|^2\right)\mathrm{d}\mathscr{H}^n\\
=\ &nc^2+\fint_{B_1(0_n)}\left(\sum_{i,j}(\langle\nabla^{\mathbb{R}^n}\varphi_{i},\nabla^{\mathbb{R}^n} \varphi_{j}\rangle)^2-2c\sum_i |\nabla^{\mathbb{R}^n} \varphi_{i}|^2\right)\mathrm{d}\mathscr{L}^n\\
=\ &\fint_{B_1(0_n)}\left|c\,\mathrm{g}_{\mathbb{R}^n}-\sum\limits_{i=1}^m d\,\varphi_i\otimes d\,\varphi_i\right|_\mathsf{HS}^2\mathrm{d}\mathscr{L}^n=0.
\end{aligned}
\]}As $x_0$ is taken to be arbitrary {}{outside a negligible set}, we have completed the proof of (\ref{eqn3.8}). Therefore, the map $(\phi_1,\ldots,\phi_m):X{}{\rightarrow} \mathbb{R}^m$ is an isometric immersion. Since $(X,\mathsf{d},\mathscr{H}^n)$ is compact and non-collapsed, it follows from Thm. \ref{H22} that $(X,\mathsf{d})$ is isometric to a smooth closed Riemannian manifold.
\end{proof}

\begin{remark}[Local radial symmetry]
The radial symmetry condition can be relaxed to a local version: if for every $x\in X$ there exists $\epsilon_x>0$ such that $\sum_{i=1}^m \phi_i(x)\phi_i(y) = F\left(\mathsf{d}\left(x, y\right)\right)$ holds for any $y\in B_{\varepsilon_x}(x)$, the same conclusion holds.

\end{remark}

\subsection{Regularity of strongly harmonic spaces}
We now examine strongly harmonic RCD$(K,N)$ spaces. Consider an RCD$(K,N)$ space $(X,\mathsf{d},\mathfrak{m})$ with essential dimension $\mathrm{dim}_{\mathsf{d},\mathfrak{m}}(X)=n$. The space is called strongly harmonic if its heat kernel $\rho$ satisfies
\begin{equation}\label{eeqn4.1}
\rho(x,y,t)=H\left(\mathsf{d}\left(x,y\right),t\right),\ \forall x,y\in X,\ \forall t>0.
\end{equation}
for some function $H:[0,\infty)\times [0,\infty)\rightarrow \mathbb{R}$. 
We begin with the proof of Thm. \ref{mainthm2}, which parallels the proof of \cite[Thm. 1.7]{H23}.
\begin{proof}[Proof of Thm. \ref{mainthm2}]We first prove the following key limit formula: there exists $c>0$ such that for any $\sigma\geqslant 0$ it holds
\begin{equation}\label{eqn4.1}
\lim_{t\downarrow 0} t^n H(\sigma t,t^2)=c^{-1}{\omega_n}^{-1}\mathop{(4\pi )^{-\frac{n}{2}}} \exp\left(-\frac{\sigma^2}{4}\right).
\end{equation}

Fix $x_0\in \mathcal{R}_n^\ast$ and take $r_i\to 0$. Then we have
\[
(X_i,\mathsf{d}_i,{}{\mathfrak{m}_i},x_0):=\left(X,{r_i}^{-1}{}{\mathsf{d}},\left(\mathfrak{m}(B_{r_i}(x_0))\right)^{-1}\mathfrak{m},x_0\right)\xrightarrow{\mathrm{pmGH}} \left(\mathbb{R}^n,\mathsf{d}_{\mathbb{R}^n},{\omega_n}^{-1}\mathscr{L}^n,0_n\right).
\]

The heat kernel $\rho_i$ of each $(X_i,\mathsf{d}_i,\mathfrak{m}_i)$ satisfies
\begin{equation}\label{eeeqn4.2}
\rho_i(x_i,y_i,1)=\mathfrak{m}(B_{r_i}(x_0))\,\rho(x_i,y_i,{r_i}^2), \ \forall x_i,y_i\in X_i.
\end{equation}

Assume first $\sigma>0$. For each sufficiently large $i$, we choose $x_i,y_i\in B_{2\sigma r_i}(x_0)$ such that {}{$\mathsf{d}\left(x_i,y_i\right)=\sigma r_i$} and that $X_i\ni x_i\to \vec{x}\in \mathbb{R}^n$, $X_i\ni y_i\to \vec{y}\in \mathbb{R}^n$, respectively. By Thm. \ref{pchk},
\[
\lim_{i\rightarrow \infty}\rho_i(x_i,y_i,1)=(4\pi )^{-\frac{n}{2}}\exp\left(-\frac{|\vec{x}-\vec{y}|^2}{4}\right)=(4\pi )^{-\frac{n}{2}}\exp\left(-\frac{\sigma^2}{4}\right).
\]
Combining this with (\ref{eeqn4.1}) and (\ref{eeeqn4.2}), we derive
{}{\begin{equation}\label{eqnaaa5.13}
\begin{aligned}
&\omega_n\, \theta^\ast(x_0)\lim_{i\rightarrow \infty} {r_i}^{n} H(\sigma r_i,{r_i}^2)=\lim_{i\rightarrow \infty}\frac{\mathfrak{m}(B_{r_i}(x_0))}{{r_i}^n}{r_i}^n H\left(\sigma r_i,{r_i}^2\right)=\lim_{i\rightarrow \infty}\mathfrak{m}(B_{r_i}(x_0))H\left(\sigma r_i,{r_i}^2\right)\\
&=\lim_{i\to\infty}\mathfrak{m}(B_{r_i}(x_0))\rho(x_i,y_i,r_i^2)=\lim_{i\to\infty}\rho_i(x_i,y_i,1)=(4\pi )^{-\frac{n}{2}}\exp\left(-\frac{\sigma^2}{4}\right),
\end{aligned}
\end{equation}}{where $\theta^\ast$ is defined in Rmk. \ref{AHT}.} Since this holds for any $r_i\to 0$ and any $x_0\in \mathcal{R}_n^\ast$, (\ref{eqn4.1}) follows. {}{Moreover, we have $\theta^\ast|_{\mathcal{R}_n^\ast}\equiv c$ for some $c>0$. The case $\sigma=0$ can be handled analogously by repeating the same argument. }

{}{We now verify
\begin{equation}\label{eeeeeeeeeeqn3.14}
\inf_{r\in (0,1),\, x\in X}r^{-n}\,\mathfrak{m}(B_r(x))>0.
\end{equation}}

We proceed by contradiction. {}{By Thm. \ref{BGineq}, we may} assume there exists $r_i\to 0$ and a sequence of points $\{x_i\}\subset X$ such that $\lim_{i\to\infty}{r_i}^{-n}\,\mathfrak{m}(B_{r_i}(x_i))=0$. After passing to a subsequence, we have 
\[
(X_i,\mathsf{d}_i,{}{\mathfrak{m}_i},x_0):=\left(X,{r_i}^{-1}{}{\mathsf{d}},\left(\mathfrak{m}(B_{r_i}(x_0))\right)^{-1}\mathfrak{m},x_i\right)\xrightarrow{\mathrm{pmGH}} \left(X_\infty,\mathsf{d}_\infty,\mathfrak{m}_\infty,x_\infty\right),
\]
for some pointed RCD$(0,N)$ space $\left(X_\infty,\mathsf{d}_\infty,\mathfrak{m}_\infty,x_\infty\right)$. The heat kernel $\rho_\infty$ of this space satisfies
\[
\begin{aligned}
\rho_\infty(x_\infty,x_\infty,1)&=\lim_{i\rightarrow \infty}\mathfrak{m}(B_{r_i}(x_i))\mathop{\rho(x_i,x_i,r_i^2)}=\lim_{i\rightarrow \infty}\mathfrak{m}(B_{r_i}(x_i))\mathop{H(0,{r_i}^2).}
\end{aligned}
\]

Combining (\ref{eqn4.1}) then yields $\rho_\infty(x_\infty,x_\infty,1)=0$, which contradicts (\ref{JLZineq}). Thus  (\ref{eeeeeeeeeeqn3.14}) follows, which along with \cite[Thm.~2.4.3]{AT04} implies $\mathscr{H}^n \ll \mathfrak{m}$. By Rmk.~\ref{AHT}, $\mathfrak{m}=c\mathscr{H}^n$. Finally, applying (\ref{eeeeeeeeeeqn3.14}) and \cite[Thms.~1.5 and 2.22]{BGHZ23}, we conclude that $(X,\mathsf{d},\mathscr{H}^n)$ is a non-collapsed RCD$(K,n)$ space.

\end{proof}

Assume in addition $(X,\mathsf{d},\mathscr{H}^n)$ is compact with $\mathscr{H}^n(X)=1$. Let {}{$C$} be the constant from Prop.~\ref{prop3.1}.  We show that the space is radially symmetric (and thus smooth by Thm.~\ref{mainthm1})
\begin{proof}[Proof of Cor.~\ref{cor1.5} (\textbf{Smoothness})]
Without loss of generality, we still assume $\mathscr{H}^n(X)=1$. By Rmk. \ref{rmk3.2}, the heat kernel admits the expansion:
\[
\sum\limits_{i=0}^\infty \exp(-\mu_i t)\phi_i(x)\phi_i(y)=\rho(x,y,t)=H\left(\mathsf{d}\left(x,y\right),t\right),\, \forall x,y\in X, \, \forall t>0.
\]

Let $\{\phi_1,\ldots,\phi_m\}$ be an $L^2$-orthonormal basis for the $\mu_1$-eigenspace. For any $x,y\in X$ and any $t>0$,
\begin{equation}\label{eeqn4.3}
\sum\limits_{i=1}^m\phi_i(x)\phi_i(y)=\exp(\mu_1 t)(H(\mathsf{d}(x,y),t)-1)-\sum\limits_{i=m+1}^\infty \exp((\mu_1-\mu_i) t)\,\phi_i(x)\phi_i(y).
\end{equation}

Let $N_0$ be the integer such that for all $i\geqslant N_0$ it holds $\mu_i\geqslant C^{-1} \mathop{i^{2/n}}\geqslant 2\mu_1$. The second term of the right hand side of (\ref{eeqn4.3}) satisfies
\begin{equation}\label{eeqn4.4}
\begin{aligned}
&\left|\sum\limits_{i=m+1}^\infty \exp((\mu_1-\mu_i)t)\,\phi_i(x)\phi_i(y)\right|
\leqslant C \sum\limits_{i=m+1}^\infty \exp((\mu_1-\mu_i )t)\, {}{{i}^n}\\
=\,&{}{C}\sum\limits_{i=m+1}^{N_0} \exp((\mu_1-\mu_i) t)\, {}{{i}^n}+{}{C}\sum\limits_{i=N_0+1}^{\infty} \exp((\mu_1-\mu_i) t)\, {}{{i}^n}\\
\leqslant\, &{}{C}\sum\limits_{i=m+1}^{N_0} \exp((\mu_1-\mu_i) t)\, {}{{i}^n}+{}{C}\sum\limits_{i=N_0+1}^{\infty} \exp\left(-\frac{i^{2/n}t}{2C}\right)\, {}{{i}^n}.\\
\end{aligned}
\end{equation}
{}{Since $\lim_{s\to \infty}\exp\left(-\frac{s^{2/n}}{4C}\right)\mathop{s^{n}}=0$, there exists $T\gg 1$ such that for any $t>T$ and $i\geqslant N_0$ we have $\exp\left(-\frac{i^{2/n}t}{4C}\right)\, {i}^n\leqslant 1$, which implies}{}{\begin{equation}\label{eeeqn3.18}
\begin{aligned}
&\lim_{t\to\infty}\sum\limits_{i=N_0+1}^{\infty} \exp\left(-\frac{i^{2/n}t}{2C}\right)\, i^n\leqslant  \lim_{t\to\infty}\sum\limits_{i=N_0+1}^{\infty} \exp\left(-\frac{i^{2/n}t}{4C}\right)\\
\leqslant\ & \lim_{t\to\infty}\int_{0}^\infty \exp\left(-\frac{s^{2/n} t}{4C}\right)\,\mathrm{d}s=\lim_{t\to\infty}t^{-2/n}\int_{0}^\infty \exp\left(-\frac{s^{2/n}}{4C}\right)\,\mathrm{d}s=0.
\end{aligned}
\end{equation}}

From (\ref{eeqn4.3}), (\ref{eeqn4.4}) and (\ref{eeeqn3.18}), we conclude
\[
\sum\limits_{i=1}^m\phi_i(x)\phi_i(y)=\lim_{t\rightarrow \infty} \exp(\mu_1 t)(H\left(\mathsf{d}\left(x,y\right),t\right)-1):=F\left(\mathsf{d}\left(x,y\right)\right),
\]
proving radial symmetry. Now the smoothness of $(X,\mathsf{d})$ follows from Thm. \ref{mainthm1}.

\end{proof}

\section{Regularity of RCD spaces with volume homogeneity}\label{sec4}
This section is devoted to the proof of Thm. \ref{thm1.11}, which establishes smoothness for compact RCD spaces under the volume homogeneity assumption. We begin by introducing
key tools, including the co-area formula and measure disintegration.

\subsection{Co-area formula and disintegration formula on RCD spaces}\label{sec4.1}

We first recall the co-area formula, a fundamental tool in geometric analysis. For further details, see \cite[Prop. 4.2]{M03} (for the general statement) and \cite{ABS19, BPS23} (for applications to perimeter measures in RCD spaces). Additionally, \cite[Rmk. 3.5]{GH16} clarifies the relationship between the total variation of a Lipschitz function and its weak upper gradient.

\begin{thm}[Co-area formula]\label{coareafor}Let $(X, \mathsf{d}, \mathscr{H}^n)$ be a compact non-collapsed $\mathrm{RCD}(K, n)$ space and let $v$ be a Lipschitz function on $X$. Then:
\begin{itemize} 
\item[$(1)$] The set $\{v > t\}$ has finite perimeter for $\mathscr{L}^1$-a.e. {}{$t\in\mathbb{R}$}.
\item[$(2)$] For every Borel function $f: X \rightarrow [0, \infty]$, we have
\[
\int_X f \,|\nabla v| \, \mathrm{d}\mathscr{H}^n = \int_{-\infty}^\infty \left( \int_{\{v=t\}} f \, \mathrm{d}\mathscr{H}^{n-1} \right) \mathrm{d}t.
\]
\end{itemize}
\end{thm}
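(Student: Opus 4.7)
The plan is to assemble Theorem \ref{coareafor} from three ingredients already recorded in the literature, rather than re-proving the coarea formula from scratch. First I would invoke the BV coarea formula of Miranda \cite[Prop. 4.2]{M03}, which holds on any doubling metric measure space supporting a (1,1)-Poincar\'e inequality, and therefore on every non-collapsed $\mathrm{RCD}(K,n)$ space by the Bishop--Gromov inequality (Theorem \ref{BGineq}) and the standard Poincar\'e inequality for RCD spaces. Applied to the Lipschitz (hence BV) function $v$ this yields the measure-theoretic identity
\[
\|Dv\|(A)=\int_{-\infty}^{\infty} P\bigl(\{v>t\},A\bigr)\,\mathrm{d}t \qquad \text{for every Borel } A\subset X,
\]
where $\|Dv\|$ is the total variation measure of $v$ and $P(E,\cdot)$ is De Giorgi's perimeter measure. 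In particular, $\{v>t\}$ has finite perimeter for $\mathscr{L}^{1}$-a.e.\ $t\in\mathbb{R}$.

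Second, I would identify the two measures appearing in this identity with the objects in the statement. On the left hand side, \cite[Rmk. 3.5]{GH16} gives $\|Dv\| = |\nabla v|\,\mathscr{H}^{n}$ for Lipschitz $v$ on a non-collapsed $\mathrm{RCD}(K,n)$ space, because the minimal weak upper gradient coincides with the pointwise Lipschitz constant $\mathfrak{m}$-a.e.\ and $\mathfrak{m}=\mathscr{H}^n$. On the right hand side, the representation of the perimeter measure proved in \cite{ABS19} (and sharpened in \cite{BPS23}) yields $P(\{v>t\},\cdot)=\mathscr{H}^{n-1}\llcorner\partial^{*}\{v>t\}$ for a.e.\ $t$; the essential boundary coincides $\mathscr{H}^{n-1}$-a.e.\ with $\{v=t\}$ by an elementary density argument, since $v$ is Lipschitz.

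Third, combining these two identifications upgrades Miranda's formula to
\[
\int_{A}|\nabla v|\,\mathrm{d}\mathscr{H}^{n}=\int_{-\infty}^{\infty}\mathscr{H}^{n-1}\bigl(A\cap\{v=t\}\bigr)\,\mathrm{d}t,
\]
i.e.\ the desired formula when $f=\mathbf{1}_{A}$. The general case follows by the standard monotone class argument: pass to simple Borel functions by linearity and to arbitrary non-negative Borel $f$ by monotone convergence, where Fubini--Tonelli justifies the interchange.

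The one delicate point, and the step I would scrutinise, is the identification of the perimeter measure with $\mathscr{H}^{n-1}$ restricted to $\{v=t\}$: the reduced/essential boundary is a priori a measure-theoretic object, and aligning it with the metric level set requires a Lebesgue-density argument together with the fact that $\{v>t\}\cap\{v<t\}=\emptyset$ forces $\mathscr{H}^{n-1}$-a.e.\ point of $\partial^{*}\{v>t\}$ to satisfy $v=t$ (here one uses the Lipschitz property of $v$ and the fact that for a.e.\ $t$ the superlevel set has no ``thick'' boundary). No step requires smoothness; everything reduces to the abstract BV theory plus the non-collapsing structure guaranteed by $\mathfrak{m}=\mathscr{H}^{n}$.
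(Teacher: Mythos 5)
Your proposal assembles the result from \cite[Prop.~4.2]{M03}, \cite[Rmk.~3.5]{GH16}, and the perimeter representation in \cite{ABS19,BPS23}, which is precisely what the paper does: the paragraph preceding Theorem~\ref{coareafor} offers no independent argument and simply points to these same three references in the same roles. Your version is in fact slightly more scrupulous than the paper's in explicitly flagging the identification of $\partial^*\{v>t\}$ with the level set $\{v=t\}$ (up to $\mathscr{H}^{n-1}$-null sets for a.e.\ $t$) as the step that needs a density argument, a point the paper passes over in silence.
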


Next, we focus on disintegration with respect to the distance function $\mathsf{d}_x:=\mathsf{d}\left(x,\cdot\right)$ for a fixed point $x$ in a compact RCD$(K,N)$ space $(X,\mathsf{d},\mathfrak{m})$. See for instance \cite[Sec. 3]{CM20}.

We recall a geometric property of RCD spaces.
\begin{thm}[{\cite[Thm.~1.3]{D20}}]\label{RCDnb}
Any $\mathrm{RCD}(K,N)$ space $(X,\mathsf{d},\mathfrak{m})$ admits no branching geodesics. Especially, there exist no two distinct unit-speed geodesics $\gamma_1,\gamma_2:[0,1]\to X$ such that 
\[
\gamma_1(s)=\gamma_2(s),\ \  \forall s\in [0,t], \ \ \text{but}\ \ \gamma_1(t)\neq\gamma_2(t)\ \ \text{for some}\ \  t\in (0,1). 
\]    
\end{thm}

Let us fix a point $x$ in a compact RCD$(K,N)$ space $(X,\mathsf{d},\mathfrak{m})$ with $\mathrm{dim}_{\mathsf{d},\mathfrak{m}}(X)\geqslant 2$. The construction begins with the set:
\[
\Gamma:=\{(y,z)\in X\times X:\mathsf{d}_x(y)-\mathsf{d}_x(z)=\mathsf{d}\left(y,z\right)\},
\]
where the transpose is defined as $\Gamma^{-1}:=\{(y,z)\in X\times X: (z,y)\in \Gamma\}$. This thereby induces the transport relation $R:=\Gamma\cup \Gamma^{-1}$. For any $y\in X$, we define the section $\Gamma(y)=\{z\in X:(y,z)\in \Gamma\}$ through the first coordinate (with $R(y)$ defined analogously via symmetry in both coordinates). Building on \cite{C14}, we then construct the forward branching set:
\[
{\mathcal{E}=\{w\in X:\exists\  (y,z)\in \Gamma(w)\times \Gamma(w) \setminus  R\}. }
\]
Under the assumption $\mathrm{dim}_{\mathsf{d},\mathfrak{m}}(X)\geqslant 2$ and by Thm. \ref{RCDnb}, the backward branching point set 
\[
{}{\{w\in X:\exists\  (y,z)\in \Gamma^{-1}(w)\times \Gamma^{-1}(w) \setminus  R\} }
\]
reduces exactly to the singleton $\{x\}$.

The non-branched transport set $\mathcal{T}:=X\setminus (\mathcal{E}\cup\{x\})$ is a Borel set such that $\mathfrak{m}(X\setminus \mathcal{T})=0$ by \cite[Thm. 5.5]{C14}. According to \cite[Thm. 4.6]{C14} (see also \cite{BC13}), the non-branched transport relation $\mathcal{R}:=R\cap (\mathcal{T}\times \mathcal{T})$ becomes an equivalence relation on $\mathcal{T}$, where each section $R(y)\subset (X,\mathsf{d})$ for $y\in \mathcal{T}$ forms a unit speed geodesic (parameterized by a closed interval) starting from $x$.

This equivalence relation partitions  $\mathcal{T}$ into a disjoint family of equivalence classes $\{X_\alpha\}_{\alpha\in Q}$ (where $Q$ is an index set), with the quotient map $\mathfrak{Q}:\mathcal{T}\rightarrow Q$ uniquely determined by the correspondence  
\[
\alpha=\mathfrak{Q}(y) \iff y\in X_\alpha.
\]
We equip $Q$ with the quotient $\sigma$-algebra $\mathscr{Q}$, derived from the $\sigma$-algebra $\mathscr{X}$ over $X$ consisting of $\mathfrak{m}$-measurable sets. This quotient $\sigma$-algebra is defined by
\[
A\in \mathscr{Q}\iff \mathfrak{Q}^{-1}(A)\in \mathscr{X},
\]
making it the finest $
\sigma$-algebra for which $\mathfrak{Q}$ is measurable. The normalized quotient measure is then defined by $\mathfrak{q}:=\mathfrak{m}(X)^{-1}\mathfrak{Q}_\sharp (\mathfrak{m})$, meaning that $\mathfrak{q}$ is obtained by pushing forward the measure $\mathfrak{m}(X)^{-1}\mathfrak{m}$, with $\mathfrak{Q}_\sharp \mathfrak{m}\ll \mathfrak{q}$ following immediately. 

Indeed, the quotient map $\mathfrak{Q}$ admits an explicit construction that embeds $Q$ into $X$ while maintaining $\mathcal{A}$-measurability, where $\mathcal{A}$ denotes the $\sigma$-algebra generated by analytic sets of $X$ (see \cite{CM17} and \cite[Lem. 3.8]{CM201}).

Combining these results with the classical disintegration theorem \cite[Prop. 452F]{F03} and recent localization results (\cite[Thm. 9.5]{BC13} and \cite[Thm. 5.1]{CM17b}), we obtain:


\begin{thm}[Needle decomposition]\label{thmdisinte}
For the measure $\mathfrak{m}\llcorner_\mathcal{T}$, there exists a disintegration:
\begin{equation}\label{disintefor}
\mathfrak{m}\llcorner_\mathcal{T}:=\int_Q \mathfrak{m}_\beta \mathop{\mathrm{d}\mathfrak{q}(
\beta)},
\end{equation}
where for $\mathfrak{q}$-almost everywhere $\beta\in Q$ the following holds: 
\begin{enumerate}
\item[$(1)$] Each needle $X_\beta$ is a geodesic starting from $x$, equipped with a measure $h_\beta\mathscr{H}^1$, where $h_\beta$ a $\mathrm{CD}(K,N)$ density on $X_\beta$ $($i.e., $\mathfrak{m}_\beta=h_\beta \mathscr{H}^1\llcorner_{X_\beta})$.
\item[$(2)$] The map $\beta\mapsto \mathfrak{m}_\beta(B)$ is $\mathfrak{q}$-measurable for any Borel set $B\subset X$ (This ensures the disintegration is well-defined).
\end{enumerate}
{}{As a result of (\ref{disintefor}) and our construction}, the following identity holds for any $\mathfrak{m}$-measurable set $B$ and $\mathfrak{q}$-measurable set $A$:
\[
\mathfrak{m}(B\cap \mathfrak{Q}^{-1}(A))=\int_A \int_{X_\beta} \chi_B\,{}{h_\beta}\mathop{\mathrm{d}\mathscr{H}^1}\mathop{\mathrm{d}\mathfrak{q}(\beta).}
\]

\end{thm}
 
\begin{remark}\label{rmk4.6aaaaaaaaa}
By stating $h_\beta$ a CD$(K,N)$ density, we mean $\left(\overline{X}_\beta,\mathsf{d},h_\beta\mathscr{H}^1\right)$ verifies the $\mathrm{CD}(K,N)$ condition, where $\overline{X}_\beta$ is the closure of $X_\beta$. While we omit the definition of CD$(K,N)$ metric measure spaces (see \cite{LV09,St06a,St06b}), we highlight some key properties for such densities $h_\beta$ as follows.
\begin{enumerate}
\item[$(1)$] The function $h_\beta$ is locally semi-concave in the interior of $X_\beta$. That is, for every $z_0$ in the interior of $X_\beta$, there exists a constant $C(z_0,\beta)\in \mathbb{R}$ such that the function $z\mapsto \left(h_\beta-C(z_0,\beta)z^2\right)$ is concave in a neighborhood (in $X_\beta$) of $z_0$. Consequently $h_\beta$ is locally Lipschitz continuous on $X_\beta$.
\item[$(2)$] {}{Let $e(X_\beta)$ be the endpoint of $X_\beta$. Then at any point $t \in (0,\mathsf{d}(e(X_\beta),x))$ where $h_\beta$ is differentiable, the derivative of $\log h_\beta$ satisfies 
\begin{equation}\label{eqn4.6cccc}
-\frac{V_{K,N}'(\mathsf{d}(e(X_\beta),x)-t)}{V_{K,N}(\mathsf{d}(e(X_\beta),x)-t)}\leqslant (\log h_\beta)'(t)\leqslant \frac{V_{K,N}'(t)}{V_{K,N}(t)},
\end{equation}
(See \cite[Lem.~A.9]{CM21} for a proof.) }
\item[$(3)$]\label{feiohfwioeh} {}{By \cite[Cor. 4.19]{CM20}, $ \mathsf{d}_x \in D(\mathbf{\Delta}, X \setminus \{p\})$  and one element of $\mathbf{\Delta} \mathsf{d}_{x} \llcorner (X \setminus \{p\})$, that we still denote with $\mathbf{\Delta} \mathsf{d}_{x} \llcorner (X \setminus \{p\})$, is a Radon functional with the following representation formula:}

\begin{equation}\label{deltadistance}
{}{\mathbf{\Delta} \mathsf{d}_{x} \llcorner (X \setminus \{p\}) = (\log h_\beta)'\  \mathfrak{m} - \int_Q h_\beta \delta_{e(X_\beta)} \, \mathfrak{q}(d\beta).}
\end{equation}
{}{Here $\delta_{e(X_\beta)}$ denotes the Dirac measure at $e(X_\beta)$. Moreover, combining \cite[Prop.~5.1]{CM20} implies that for any open set $U\subset X$ such that $x\notin \bar{U}$ it holds $\|\mathbf{\Delta} \mathsf{d}_x\|(U)<\infty$, where $\|\mathbf{\Delta} \mathsf{d}_x\|$ is the total variation measure of $\mathbf{\Delta}\mathsf{d}_x$.}
\end{enumerate}
\end{remark}
\begin{remark}\label{GaussGreen}
{}{A direct consequence of Rmk.~\ref{rmk4.6aaaaaaaaa}(3) is, when $(X,\mathsf{d},\mathscr{H}^n)$ is a compact non-collapsed RCD$(K,n)$ space, for $\mathscr{L}^1$-a.e. $r\in (0,\mathrm{diam}(X,\mathsf{d}))$, the exterior unit normal to $\partial B_r(x)$ coincides $\mathscr{H}^{n-1}$-a.e. with $\nabla \mathsf{d}_x$ (see \cite[Prop. 6.1]{BPS23b}). Therefore combining the Gauss-Green formula \cite[Thm. 2.4]{BPS23} we see for any $g\in D(\Delta)$ it holds that 
\[
\int_{B_r(x)}\Delta g\,\mathrm{d}\mathscr{H}^n=\int_{\partial B_r(x)}\langle\nabla g,\nabla \mathsf{d}_x\rangle\,\mathrm{d}\mathscr{H}^{n-1},\ \mathscr{L}^1\text{-a.e.}\  r\in (0,\mathrm{diam}(X,\mathsf{d})).
\]
Moreover, due to \cite[Thm.~5.2]{BPS23b}, given any $y\neq x$ we have for $\mathscr{L}^1$-a.e. $r\in (0,\mathsf{d}(x,y))$ it holds
\[
\int_{B_r(x)}\mathbf{\Delta} \mathsf{d}_y=\int_{\partial B_r(x)}\langle\nabla \mathsf{d}_y,\nabla \mathsf{d}_x\rangle\,\mathrm{d}\mathscr{H}^{n-1},\ \mathscr{L}^1\text{-a.e.}\  r\in (0,\mathsf{d}(x,y)).
\]}
\end{remark}
\subsection{A H\"{o}lder estimate}
 
In this section, we prove a crucial H\"{o}lder estimate (Thm. \ref{thmhiojiofjea}) for the pointwise inner product of gradients of distance functions in the RCD framework. We would like to express our gratitude to Prof. Shouhei Honda for his insights regarding this result. We begin by recalling the Abresch-Gromoll inequality \cite{AG90}, which was generalized to RCD$(K,N)$ spaces in \cite{GM14} and \cite[Thm. 3.7, Cor. 3.8]{MN14}.

\begin{thm}[Abresch-Gromoll inequality]\label{AGTHM}There exists $\alpha(N) \in (0, 1)$ with the following property. Let $(X,\mathsf{d},\mathfrak{m})$ be an $\mathrm{RCD}(K,N)$ space. For any $p, q \in X$ with $l:= \mathsf{d}(p, q) \leq 1$, let $x_0$ the midpoint of a geodesic from $p$ to $q$. Then

\begin{equation}\label{AGthm} e_{p,q}(x) \leq C(K, N)\,l\, r^{1+\alpha(N)} , \quad \forall x \in B_{r}(x_0), \quad r \in (0, l/4), 
\end{equation}
where $e_{p,q}:= \mathsf{d}_p+\mathsf{d}_q-\mathsf{d}(p, q)$ is the so-called excess function associated to $p, q$.
\end{thm}
The following lemma constructs a test function using the heat kernel, whose properties are helpful in proving Thm. \ref{thmhiojiofjea}.
\begin{lem}\label{labelfejiaofjhaeoifj}
For any $L>100$, $N>2$, there exist a constant $C=C(L,N)$ such that for any $\mathrm{RCD}(-1,N)$ space $(X,\mathsf{d},\mathfrak{m})$ with two points $x,y\in X$ satisfying $\mathsf{d}(x,y)=L$ and $\mathfrak{m}(B_1(x))=1$, the function $P:z\mapsto \int_0^{1}\rho(y,z,t)\,\mathrm{d}t$ $($where $\rho$ is the heat kernel of the space$)$ is well-defined on $B_2(x)$ and satisfies 
\begin{equation}\label{fejaiofhjeaoifhjae}
C\geqslant \Delta P\geqslant C^{-1}\ \ \text{on}\ \ B_1(x);
\end{equation} 
\begin{equation}\label{feahiofhaoifhaeo}
C\Delta P\geqslant P\ \ \text{on}\ \ B_1(x).
\end{equation}

\end{lem}

\begin{proof}
By (\ref{JLZineq}) and Thm.~\ref{BGineq}, there exists a constant $J(N)$ such that for any $z\in B_2(x)$ it holds 
\[
\begin{aligned}
	&P(z)=\int_0^{1}\rho(y,z,t)\,\mathrm{d}t\leqslant J(N)\int_0^1 \frac{1}{\mathfrak{m}(B_{\sqrt{t}}(z))}\exp\left(-\frac{\mathsf{d}^2(z,y)}{5t}\right)\,\mathrm{d}t\\
	&=J(N)\,\frac{\mathfrak{m}(B_1(x))}{\mathfrak{m}(B_1(z))}\int_0^1 \frac{\mathfrak{m}(B_1(z))}{\mathfrak{m}(B_{\sqrt{t}}(z))}\exp\left(-\frac{\mathsf{d}^2(z,y)}{5t}\right)\,\mathrm{d}t\\
	&\leqslant J(N)\,\frac{\mathfrak{m}(B_3(z))}{\mathfrak{m}(B_1(z))}\int_0^1 \frac{\mathfrak{m}(B_1(z))}{\mathfrak{m}(B_{\sqrt{t}}(z))}\exp\left(-\frac{\mathsf{d}^2(z,y)}{5t}\right)\leqslant J(N)\int_0^1 t^{-\frac{N-1}{2}}\exp\left(-\frac{\mathsf{d}^2(z,y)}{5t}\right)\,\mathrm{d}t\\
	&\leqslant J(N)\int_0^1 t^{-\frac{N-1}{2}}\exp\left(-\frac{
	(L-1)^2}{5t}\right)\,\mathrm{d}t:=J(L,N).
\end{aligned}
\]

Therefore, $P$ is well-defined on $B_2(x)$. Observe that $\Delta P=\rho(\cdot,y,1)$ on $B_2(x)$. Similarly one can show that $J(L,N)\geqslant \Delta P\geqslant (J(L,N))^{-1}$ on $B_1(x)$. The conclusion then follows by letting $C(L,N)=(J(L,N))^2$.

\end{proof}
The proof of the following theorem employs the method described in \cite[Prop. 4.1]{H14}. See also \cite{CC962,MW19}.
\begin{thm}\label{thmhiojiofjea}
Let $(X,\mathsf{d},\mathscr{H}^n)$ be a non-collapsed $\mathrm{RCD}(K,n)$ space. Assume there exist points $x_i,y_i\in X$ such that $\mathsf{d}(x_i,y_i)=2l\leqslant 1$ $(i=1,2)$ and that a point $x_0\in X$ is the midpoint of a geodesic from $x_1$ to $y_1$ and a geodesic from $x_2$ to $y_2$. Then, for any 
\[
r\in \left(0,\min \left\{l/10000,l^{1/(1-\alpha(n))},|K|^{-1/2}/8\right\}\right) 
\] 
the following holds.
\begin{equation}\label{hjofiaehjoifahjeoifjh}
\fint_{B_r(x)}\left|\langle\nabla \mathsf{d}_{x_1},\nabla\mathsf{d}_{x_2}\rangle-\fint_{B_r(x)}\langle\nabla \mathsf{d}_{x_1},\nabla\mathsf{d}_{x_2}\rangle\,\mathrm{d}\mathscr{H}^n\right|\mathrm{d}\mathscr{H}^n\leqslant C(K,n)r^{\alpha(n)/2},\ \forall x\in B_r(x_0).
\end{equation}
Here $\alpha(n)$ is the constant taken in Thm.~\ref{AGTHM}.
\end{thm}
\begin{proof}Since RCD$(K,n)$ spaces also satisfy the RCD$(-|K|-1,n+2)$ condition, we may assume that $K<-1$ and $n>3$. Take $z\in X$ such that $\mathsf{d}(z,x)=8000r$ (such a point exists, for instance on the geodesic from $x_i$ to $x$). For any $0<r\leqslant |K|^{-1/2}/8$,  since the rescaled space $(X,(8r)^{-1}\mathsf{d},(\mathscr{H}^n(B_{8r}(x)))^{-1}\mathscr{H}^n)$ is an RCD$(64r^2K,n)$ space, Lem. \ref{labelfejiaofjhaeoifj} guarantees the existence of $G\in D(\Delta,B_{8r}(x))$ satisfying
\begin{equation}\label{hfioaehfoahweouifeah}
0<G\leqslant C(n)\ \  \text{and}\ \  C(n)^2r^{-2}\geqslant C(n)\Delta G\geqslant r^{-2}\ \text{on}\ B_{8r}(x).
\end{equation}

Let $ u_i$, $ v_i$ be solutions to the following auxiliary problems, where by $H_0^{1,2}(B_{4r}(x))$ we mean the $H^{1,2}$-closure of $\mathrm{Lip}_c(B_{4r}(x),\mathsf{d})$. The existence of such solutions is guaranteed by \cite[Cor. 1.2]{BM95} and \cite[Thm. 10.12]{BB11}.
\[
\left\{
\begin{aligned}
\Delta  u_i=0\ \  \text{on}\  B_{4r}(x),\ \ \ \ \ \ \ \ \ \\
\mathsf{d}_{x_i}-l- u_i\in H^{1,2}_0(B_{4r}(x)),
\end{aligned}
\right.
\,\, \left\{
\begin{aligned}
\Delta  v_i=0\ \  \text{on}\  B_{4r}(x),\ \ \ \ \ \ \ \ \ \\
 \mathsf{d}_{y_i}-l-v_i\in H^{1,2}_0(B_{4r}(x)).
\end{aligned}
\right.
\]

We assert: 
\begin{equation}\label{feawhjuiofhouawehf}
\|\mathsf{d}_{x_i}-l- u_i\|_{L^\infty(B_{4r}(x))}\leqslant C(K,n)\, r^{1+\alpha}.
\end{equation} 

From (\ref{AGthm}), we derive
\begin{equation}\label{ttcomestogz1}
0\leqslant\min_{\bar{B}_{4r}(x)} (\mathsf{d}_{x_i}+\mathsf{d}_{y_i}-2l)\leqslant  \max_{\bar{B}_{4r}(x)} (\mathsf{d}_{x_i}+\mathsf{d}_{y_i}-2l)=\max_{\bar{B}_{4r}(x)}e_{x_i,y_i}\leqslant C(K,n)\, r^{1+\alpha}.
\end{equation}
Applying the weak maximum and minimum principles (~\cite[Lem.~8.32]{BB11}, \cite[Thm.~2.5]{GR19}) yields
\begin{equation}\label{ttcomestogz2}
 0\leqslant \sup_{B_{4r}(x)} ( u_i+ v_i)\leqslant  C(K,n)\, r^{1+\alpha}.
\end{equation}

Due to (\ref{deltadistance}), the measure valued Laplacians satisfy
\begin{align}\label{fneoaihfaef}
\mathbf{\Delta} \mathsf{d}_{x_i}\llcorner_{B_{l/4}(x_0)}\leqslant  \frac{C(K,n)}{l}\mathscr{H}^n\llcorner_{B_{l/4}(x_0)},\,\,\mathbf{\Delta} \mathsf{d}_{y_i}\llcorner_{B_{l/4}(x_0)}\leqslant  \frac{C(K,n)}{l}\mathscr{H}^n\llcorner_{B_{l/4}(x_0)}.
\end{align}
Using (\ref{hfioaehfoahweouifeah}) then shows \[
 \mathbf{\Delta}(\mathsf{d}_{x_i}- u_i-C(K,n)\,l^{-1}r^2G)\llcorner_{B_{4r}(x)}\leqslant 0,\  \mathbf{\Delta}(\mathsf{d}_{y_i}- v_i-C(K,n)\,l^{-1}r^2G)\llcorner_{B_{4r}(x)}\leqslant 0.
\]By the weak minimum principle, 
\[
\min\left\{\inf_{B_{4r}(x)}(\mathsf{d}_{x_i}-l- u_i),\inf_{B_{4r}(x)}(\mathsf{d}_{y_i}-l- v_i)\right\}\geqslant -C(K,n)\,l^{-1}r^2\sup_{\partial B_{4r}(x)}G.
\]
Combining this with (\ref{hfioaehfoahweouifeah}),  (\ref{ttcomestogz1}) and (\ref{ttcomestogz2}) implies
\[
-C(K,n)\,l^{-1}r^2\leqslant \mathsf{d}_{x_i}-l- u_i\leqslant C(K,n)\,(r^{1+\alpha}+l^{-1}r^2).
\]Since $l^{-1}r\leqslant r^{\alpha}$, (\ref{feawhjuiofhouawehf}) follows.

 Because $s\mapsto \mathscr{H}^n(B_s(x))/V_{K,n}(s)$ is monotone non-increasing, by taking derivative and using co-area formula we see
\begin{align}\label{b4.15}
\frac{\mathscr{H}^{n-1}(\partial B_s(x))}{\mathscr{H}^{n}(B_s(x))}\leqslant\frac{ V_{K,n}'(s)}{V_{K,n}(s)}\leqslant \frac{C(K,n)}{s},\ \mathscr{L}^1\text{-a.e.}\ s\in (0,r).
\end{align}
Rmk.~\ref{GaussGreen} then implies for $\mathscr{L}^1$-a.e. $s\in (0,r)$ it holds 
\begin{align}\label{feaufhaefhaeio}
\left|\int_{B_s(x)}\mathbf{\Delta}\mathsf{d}_{x_i}\right|\leqslant \mathscr{H}^{n-1}(\partial B_s(x))\leqslant C(K,n)\,\frac{\mathscr{H}^n(B_s(x))}{s},\ \mathscr{L}^1\text{-a.e.}\ s\in (0,r).
\end{align}

Using the harmonicity of $u_i$, (\ref{feawhjuiofhouawehf}), (\ref{fneoaihfaef}) and (\ref{feaufhaefhaeio}), we compute
\begin{equation}\label{faehuifhaoeuif}
\begin{aligned}
&\fint_{B_{4r}(x)}{|\nabla (\mathsf{d}_{x_i}- u_i)|}^2\,\mathrm{d}\mathscr{H}^n\\
=\ &\fint_{B_{4r}(x)}\left\langle\nabla \mathsf{d}_{x_i},\nabla(\mathsf{d}_{x_i}-l- u_i)\right\rangle\,\mathrm{d}\mathscr{H}^n=\fint_{B_{4r}(x)}(\mathsf{d}_{x_i}-l- u_i)\mathbf{\Delta}\,\mathsf{d}_{x_i}\\
=\ &\fint_{B_{4r}(x)}\left(\mathsf{d}_{x_i}-l- u_i+2C(K,n)r^{1+\alpha}\right)\mathbf{\Delta}\,\mathsf{d}_{x_i}-2C(K,n)r^{1+\alpha}\fint_{B_{4r}(x)}\mathbf{\Delta}\,\mathsf{d}_{x_i}\leqslant C(K,n)\,r^{\alpha}.
\end{aligned}
\end{equation}

Let $\varphi$ be a cut-off function supported in $B_{4r}(x)$ with $\varphi=1$ on $B_{2r}(x)$ (see Rmk. \ref{rmk2.16}). Since $\|\Delta \varphi\|_{L^\infty}\leqslant C(K,n)r^{-2}$ and $u_i$ is harmonic, substituting (\ref{faehuifhaoeuif}) into  (\ref{abc2.14}) and applying H\"{o}lder's inequality give

\[
\begin{aligned}
&\int_{B_{2r}(x)}{|\text{Hess}\, u_i|_{\mathsf{HS }}}^{2}\,\mathrm{d}\mathscr{H}^n\leqslant \int_{B_{4r}(x)}\varphi{|\text{Hess}\, u_i|_{\mathsf{HS}}}^{2}\,\mathrm{d}\mathscr{H}^n\leqslant \int_{B_{4r}(x)}(\Delta\varphi/2+|K|)\,{|\nabla u_i|}^{2}\,\mathrm{d}\mathscr{H}^n\\
&=\int_{B_{4r}(x)}(\Delta\varphi/2+|K|)\left({|\nabla(\mathsf{d}_{x_i}-u_i)|}^{2}+1-2\langle\nabla(\mathsf{d}_{x_i}- u_i),\nabla\mathsf{d}_{x_i}\rangle\right)\,\mathrm{d}\mathscr{H}^n\\
&=\int_{B_{4r}(x)}(\Delta\varphi/2+|K|)\left({|\nabla(\mathsf{d}_{x_i}-u_i)|}^{2}-2\langle\nabla(\mathsf{d}_{x_i}- u_i),\nabla\mathsf{d}_{x_i}\rangle\right)\,\mathrm{d}\mathscr{H}^n+|K|\,\mathscr{H}^n(B_{4r}(x))\\
&\leqslant C(K,n)\,\mathscr{H}^n(B_{4r}(x))\, r^{\alpha/2-2}\leqslant C(K,n)\,\mathscr{H}^n(B_{2r}(x))\, r^{\alpha/2-2},
\end{aligned}
\]
where in the last inequality we use Thm. \ref{BGineq}.

 Recalling $\sup_{B_{4r}(x)}|\mathsf{d}_{x_i}-l|\leqslant 5r$, we see $\sup_{B_{4r}(x)}| u_i|\leqslant C(K,n)\,r$. Applying \cite[Thm.~1.1]{J14} then shows $\|\nabla u_i\|_{L^\infty(B_{2r}(x))}\leqslant C(K,n)$. Thus we obtain
\begin{equation}\label{fahuoefhoaewhfio}
\begin{aligned}
&\fint_{B_r(x)}\left|\langle\nabla  u_1,\nabla u_2\rangle-\fint_{B_r(x)}\langle\nabla  u_1,\nabla u_2\rangle\,\mathrm{d}\mathscr{H}^n\right|\mathrm{d}\mathscr{H}^n\\
\leqslant\ &C(K,n)\, r \left(\fint_{B_{2r}(x)}|\nabla \langle\nabla  u_1,\nabla  u_2\rangle|\mathrm{d}\mathscr{H}^n\right)^{1/2}\ (\text{(1,2)-Poincar\'{e} inequality})\\
\leqslant \ &C(K,n)\, r \left(\fint_{B_{2r}(x)}\left(|\mathrm{Hess}\, u_1|_{\mathsf{HS}}|\nabla u_2|+|\mathrm{Hess}\, u_2|_{\mathsf{HS}}|\nabla u_1|\right)\mathrm{d}\mathscr{H}^n\right)^{1/2}\ (\text{by (\ref{11eqn2.16})})\\
\leqslant \ &C(K,n)\, r \left(\fint_{B_{2r}(x)}\left(|\mathrm{Hess}\, u_1|_{\mathsf{HS}}+|\mathrm{Hess}\, u_2|_{\mathsf{HS}}\right)\mathrm{d}\mathscr{H}^n\right)^{1/2}\leqslant C(K,n)\, r^{\alpha/4}
\end{aligned}
\end{equation}

Finally, from (\ref{faehuifhaoeuif}) and Thm. \ref{BGineq} we know the difference between (\ref{hjofiaehjoifahjeoifjh}) and (\ref{fahuoefhoaewhfio}) is controlled by $C(K,n)\, r^{\alpha/4}$, completing the proof.
\end{proof} 
\begin{remark}
The theory of sets of finite perimeter allows for a proof of this theorem in general $\mathrm{RCD}(K,N)$ spaces. In fact, a generalized version of (\ref{b4.15}) and (\ref{feaufhaefhaeio}) suffices to establish the result.
\end{remark}
\subsection{Proof of the main theorem: smoothness under volume homogeneity}

For reader's convenience, we recall Thm. \ref{thm1.11} as follows.
\begin{thm}\label{thm5.1}
Let $(X,\mathsf{d},\mathfrak{m})$ be an $\mathrm{RCD}(K,N)$ space with $n=\mathrm{dim}_{\mathsf{d},\mathfrak{m}}(X)$. If the volume of the intersection of any two geodesic balls depends only on their radii and the distance between their centers, then:
\begin{enumerate}
\item[$(1)$] $\mathfrak{m}=c\mathscr{H}^n$ for some $c>0$ and $(X,\mathsf{d},\mathscr{H}^n)$ is a non-collapsed $\mathrm{RCD}(K,n)$ space.
\item[$(2)$] If $(X,\mathsf{d})$ is compact, then it is isometric to an $n$-dimensional smooth closed Riemannian manifold $(M^n,\mathrm{g})$.
\item[$(3)$] If $(X,\mathsf{d})$ is compact and simply-connected, then it is a harmonic manifold in the classical sense. 
\end{enumerate}

\end{thm}

\begin{remark}\label{rmk4.8aaa}
By approximating with characteristic functions, the assumption in Thm. \ref{thm5.1} can be equivalently stated as follows. For any two compactly supported $L^2$-integrable functions $h_1,h_2$ on $\mathbb{R}$, the convolution of corresponding radial kernel functions $H_i=h_i\circ \mathsf{d}$ ($i=1,2$) remains radial. This generalizes Thm.  \ref{harm3} to the RCD framework.

\end{remark}
\subsubsection{Measure structure}
\begin{proof}[Proof of Thm. \ref{thm5.1} (1)]
By the volume homogeneity assumption, there exists a function $\theta:(0,\mathrm{diam}(X,\mathsf{d}))\rightarrow [0,\infty)$ such that
\begin{align}\label{fhoeahofihaefi}
\mathfrak{m}(B_r(x))=\theta(r), \ \forall x\in X,\ \forall r\in (0,\mathrm{diam}(X,\mathsf{d})).
\end{align}
Thm. \ref{BS} and Rmk. \ref{AHT} imply that the limit (\ref{vpxbbctt}) exists with a constant $c>0$.
\begin{equation}\label{vpxbbctt}
\lim_{r\rightarrow 0}\frac{\mathfrak{m}(B_r(x))}{\omega_n r^n}=\lim_{r\rightarrow 0}\frac{\theta(r)}{\omega_n r^n}=c.
\end{equation}

Since (\ref{eeeeeeeeeeqn3.14}) also holds, the conclusion follows by applying \cite[Thm.~2.4.3]{AT04}, \cite[Thms.~1.5 and 2.22]{BGHZ23} as in the proof of Thm. \ref{mainthm2} (2).
\end{proof}
\begin{remark}
A metric measure space $(X,\mathsf{d},\mathfrak{m})$ is called ball-homogeneous if there exists a function $\theta: [0,\infty)\to [0,\infty)$ such that
\[
\mathfrak{m}(B_r(x))=\theta(r), \, \forall x\in X,\ \forall r\in [0,\infty).
\]
To the best of the our knowledge, the following proposition provides the state-of-the-art in the regularity of ball-homogeneous RCD$(K,N)$ spaces. Nevertheless, it remains an open question whether a compact ball-homogeneous RCD$(K,N)$ space is isometric to a smooth closed Riemannian manifold, or even a Lipschitz manifold.
\end{remark}
\begin{prop}
Assume $(X,\mathsf{d},\mathfrak{m})$ is a ball-homogeneous $\mathrm{RCD}(K,N)$ space with $\mathrm{dim}_{\mathsf{d},\mathfrak{m}}(X)=n$. Then 
\begin{enumerate}
\item[$(1)$] The conclusion of the first statement of Thm. \ref{thm5.1} also holds, i.e., $\mathfrak{m}=c\mathscr{H}^n$ for some $c>0$ and $(X,\mathsf{d},\mathscr{H}^n)$ is a non-collapsed $\mathrm{RCD}(K,n)$ space.
\item[$(2)$] $X$ is a $C^\alpha$-manifold, where $\alpha\in (0,1)$ is a H\"{o}lder exponent determined by $n$.
\end{enumerate} 
\end{prop}
\begin{proof}
{}{For the second assertion, we combine (\ref{vpxbbctt}) with \cite[Thm.~1.6]{DG18}, and the Reifenberg's Thm. \cite[Thm. A.1.2]{ChCo1} (see also \cite{CJN21})}.
\end{proof}

 
\subsubsection{Smoothness}
 In this section, we focus on the second statement of Thm.~\ref{thm5.1} (Thm.~\ref{thm1.11}). Let $(X,\mathsf{d},\mathscr{H}^n)$ be a compact non-collapsed $\mathrm{RCD}(K,n)$ space with $n\geqslant 2$, where the volume of the intersection of any two geodesic balls depends only on their radii and the distance between their centers. Let $D=\mathrm{diam}(X,\mathsf{d})$. For any two points $x,y\in X$, we denote by $\gamma_{x,y}$ a unit speed minimal geodesic from $x$ to $y$.

\begin{prop}[Local extendibility of geodesics]\label{step1} 
Any geodesic in $X$ with length less than $D$ is locally extendible.
\end{prop}
\begin{proof}
Fix $x_0,y_0\in X$ with $\mathsf{d}\left(x_0,y_0\right)=D$ and let $\gamma_0:=\gamma_{x_0,y_0}$. For $r,s,t>0$ satisfying $r+s+t<D$, Thm. \ref{BGineq} yields $\mathscr{H}^n(\partial B_{r+s}(x_0))=0$. Thus
{}{\begin{equation}\label{ttt4.5}
\begin{aligned}
&\mathscr{H}^n(B_{r+t}(\gamma_0(s))\setminus \bar{B}_{r+s}(x_0))=\mathscr{H}^n(B_{r+t}(\gamma_0(s))\setminus B_{r+s}(x_0))\\
=&\mathscr{H}^n(B_{r+t}(\gamma_0(s)))-\mathscr{H}^n(B_{r+t}(\gamma_0(s))\cap B_{r+s}(x_0)).
\end{aligned}
\end{equation}}Since $\gamma_0(r+s+t/2)\in B_{r+t}(\gamma_0(s))\setminus \bar{B}_{r+s}(x_0)$, which is an open set, {}{there exists $\delta=\delta(r,s,t)$ such that $B_\delta(\gamma_0(r+s+t/2))\subset B_{r+t}(\gamma_0(s))\setminus \bar{B}_{r+s}(x_0)$. Therefore \[\mathscr{H}^n\left( B_{r+t}(\gamma_0(s))\setminus \bar{B}_{r+s}(x_0)\right)>\mathscr{H}^n(B_\delta(\gamma_0(r+s+t/2))>0.\]}

Let $x,y\in X$ with $l:=\mathsf{d}\left(x,y\right)<D$ and $\gamma:=\gamma_{x,y}$. For $l' \in \left(0,\min \{l,D-l\} /4\right)$ and large $i\in\mathbb{N}$, (\ref{ttt4.5}) and the volume homogeneity assumption imply  
\[
\mathscr{H}^n(B_{l+1/i}(\gamma(l'))\setminus B_{l+l'}(x))>0.
\]
Thus $B_{l+1/i}(\gamma(l'))\setminus B_{l+l'}(x)$ is non-empty. Select $z_i\in  B_{l+1/i}(\gamma(l'))\setminus B_{l+l'}(x)$. By compactness, after passing to a subsequence, $\{z_i\}$ converges to some $z\in X$ with $\mathsf{d}(z,x)\geqslant l+l'$. We deduce $\mathsf{d}(z,x)=l+l'$ by noticing that
\[
l+l'\geqslant \lim_{i\to\infty}\left(l+\frac{1}{i}+l'\right)\geqslant \lim_{i\to\infty}\mathsf{d}(z_i,\gamma(l'))+l'\geqslant \lim_{i\to\infty}\mathsf{d}(z_i,x)=\mathsf{d}(z,x).
\]

Let $\gamma':=\gamma_{y,z}: [0,l']\rightarrow X$, then for any $s\in [0,l]$, $s'\in [0,l']$ it holds that
\[
\begin{aligned}
l+l'&= \mathsf{d}\left(x,z\right)\leqslant \mathsf{d}\left(x,\gamma(s)\right)+\mathsf{d}\left(\gamma(s),\gamma'({}{s'})\right)+\mathsf{d}\left(\gamma'({}{s'}),z\right)\\
   \ &=\mathsf{d}\left(\gamma(s),\gamma'({}{s'})\right)+s+l'-s'.
\end{aligned}
\]
Conversely, $\mathsf{d}\left(\gamma(s),\gamma'({}{s'})\right)\leqslant \mathsf{d}\left(\gamma(s),y\right)+\mathsf{d}\left(y,\gamma'({}{s'})\right)=l-s+s'$. Hence, $\mathsf{d}\left(\gamma(s),\gamma'({}{s'})\right)=l-s+s'$, implying  the following curve is a minimal geodesic from $x$ to $z$.
\[
\tilde{\gamma}(s)=
\left\{\begin{array}{ll}
\gamma(s) &\text{if $s\in [0,l]$},\\
\gamma'(s-l')&\text{if $s\in (l,l+l']$.}
\end{array}\right.
\]

\end{proof}
{}{\begin{remark}[Uniqueness of geodesics]\label{step2}
The geodesic from $x$ to $y$ is unique. Suppose there exists another unit-speed geodesic $\tau\neq \gamma$, then the curve
\[
\tilde{\tau}(s)=
\left\{\begin{array}{ll}
\tau(s) &\text{if $s\in [0,l]$},\\
\gamma'(s-l')&\text{if $s\in (l,l+l']$,}
\end{array}\right.
\]
would also be a minimal geodesic from $x$ to $z$, contradicting Thm. \ref{RCDnb}. Thus the limit point $z$ is unique and independent of the choice of sequence $\{z_i\}$. Moreover, any geodesic with length $l<D$ can be uniquely extended to length $D$. 
\end{remark}}



\begin{cor}\label{step3} For any $x_0\in X$, the disintegration formula $(\ref{disintefor})$ holds for $\mathscr{H}^n\llcorner_{\mathcal{T}_{x_0}}$ with respect to $\mathsf{d}_{x_0}$, where  $\mathcal{T}_{x_0}=X\setminus (\{x_0\}\cup\partial B_D(x_0))$ and $Q_{x_0}=\partial B_{D/3}(x_0)$.
\end{cor}
\begin{proof}

For $0<r_1<r_2<D$, the map
\[
\begin{aligned}
F_{r_2\rightarrow r_1}:\partial B_{r_2}(x_0)&\longrightarrow \partial B_{r_1}(x_0)\\
y&\longmapsto \gamma_{x_0,y}(r_1),
\end{aligned}
\]
is a well-defined bijection due to Prop. \ref{step1} and Rmk. \ref{step2}. If $\partial B_{r_2}(x_0)\ni y_i\rightarrow y\in \partial B_{r_2}(x_0)$, Rmk. \ref{step2} implies that $\gamma_{x_0,y_i}$ converges to $\gamma_{x_0,y}$. Consequently, $F_{r_2\rightarrow r_1}(y_i)\rightarrow F_{r_2\rightarrow r_1}(y)$, yielding the continuity of $F_{r_2\rightarrow r_1}$. The inverse map $F_{r_1\rightarrow r_2}:=\left( F_{r_2\rightarrow r_1}\right)^{-1}$ is also continuous, making $F_{r_2\rightarrow r_1}$ a homeomorphism. Indeed, due {}{to the compactness of the spheres}, both $F_{r_2\rightarrow r_1}$ and $F_{r_1\rightarrow r_2}$ are uniform continuous.

{}{By the definition of the equivalence relation $\mathcal{R}_{x_0}$ over $\mathcal{T}_{x_0}$, and Rmk. \ref{step2}, each $x\in \mathcal{T}_{x_0}$ uniquely determines a point $\beta=F_{\mathsf{d}\left(y,x_0\right)\rightarrow D/3}(x)\in \partial B_{D/3}(x_0)$ such that $(x,\beta)\in \mathcal{R}_{x_0}$. Furthermore, for any $r\in (0,D)$, $(\beta, F_{D/3\to r}(\beta))$ also lies in $\mathcal{R}_{x_0}$. Thus we have the partition $\mathcal{T}_{x_0}=\bigcup_{\beta\in \partial B_{D/3}(x_0)}X_\beta$, where each $X_\beta$ is a geodesic:
\[
\begin{aligned}
X_\beta: (0,D)&\longrightarrow X\\
t&\longmapsto F_{D/3\to t}(\beta). 
\end{aligned}
\]}The quotient map
\[
\begin{aligned}
\mathfrak{Q}_{x_0}:\mathcal{T}_{x_0}&\longrightarrow Q_{x_0}:=\partial B_{D/3}(x_0)\\
x&\longmapsto F_{\mathsf{d}\left(x,x_0\right)\rightarrow D/3}(x),
\end{aligned}
\]
is continuous and therefore Borel measurable. Using the identification $\mathcal{T}_{x_0}\ni y\mapsto (F_{\mathsf{d}(x_0,y)\to D/3}(y),\mathsf{d}(x_0,y))$, recalling $\mathscr{H}^n(X\setminus \mathcal{T}_{x_0})=0$, and letting $\sigma_{x_0}(\beta,\cdot)$ denote the CD$(K,n)$ density on each $X_{\beta}$, we are able to rewrite the disintegration formula as
\begin{equation}\label{eqn4.10aabbbb}
\int_{X} f\mathop{\mathrm{d}\mathscr{H}^n}=\int_{Q_{x_0}}\int_{(0,D)}f\left(\beta,t\right)\sigma_{x_0}\left(\beta,t\right)\mathop{\mathrm{d}t}\mathop{\mathrm{d}\mathfrak{q}_{x_0}(\beta),}\ \forall f\in L^1.
\end{equation}
\end{proof}
\begin{remark}
For any $x,y\in X$ with $0<\mathsf{d}(x,y)<D$, by (\ref{deltadistance}) we know when $r=\min\{\mathsf{d}(x,y),D-\mathsf{d}(x,y)\}/4$ it holds
\begin{equation}\label{fdahfuafhaeiohfo}
\mathsf{d}_x\in D(\Delta,B_r(y)) \,\,\text{with} \, \|\Delta \mathsf{d}_x\|_{L^\infty(B_r(y))}\leqslant C(K,n,D)\max\left\{\frac{1}{\mathsf{d}(x,y)},\frac{1}{D-\mathsf{d}(x,y)}\right\}.
\end{equation}

\end{remark}
\begin{remark}\label{rmk4.16}
{}{By co-area formula, for any Borel set $A\subset \partial B_{D/3}(x_0)$ and $\mathscr{L}^1$-a.e. $R\in (0,D)$ it holds
\begin{equation}\label{eqn4.9fdafijoaje}
\int_{\partial B_{R}(x_0)}F_{D/3\to R}(A)\mathop{\mathrm{d}\mathscr{H}^{n-1}}=\int_{A}\sigma_{x_0}(\cdot,R)\mathop{\mathrm{d}\mathfrak{q}_{x_0}}.
\end{equation}
For any two radii $0<R_1<R_2<D$, Thms. \ref{BGineq} and \ref{DG18cor1.7}, Fubini's Thm. and (\ref{eqn4.6cccc}),  (\ref{eqn4.10aabbbb}) yield
\[
\begin{aligned}
&\left|\int_{A}\sigma_{x_0}(\cdot,R_2)\mathop{\mathrm{d}\mathfrak{q}_{x_0}}-\int_{A}\sigma_{x_0}(\cdot,R_1)\mathop{\mathrm{d}\mathfrak{q}_{x_0}}\right|=\left|\int_{A}\int_{R_1}^{R_2}\frac{\partial \sigma_{x_0}}{\partial t}\,\mathrm{d}t\mathrm{d}\mathfrak{q}_{x_0}\right|\\
\leqslant\ & \int_{R_1}^{R_2}\int_{A}\left|\frac{\partial \log\sigma_{x_0}}{\partial t}\right|
\sigma_{x_0}\,\mathrm{d}t\mathrm{d}\mathfrak{q}_{x_0}\leqslant \int_{R_1}^{R_2}\int_{Q_{x_0}}\left|\frac{\partial \log\sigma_{x_0}}{\partial t}\right|
\sigma_{x_0}\,\mathrm{d}t\mathrm{d}\mathfrak{q}_{x_0}\\
\leqslant \ &C(K,n)\max\{{R_1}^{-1},(D-R_2)^{-1}\}\mathscr{H}^n(A_{R_1,R_2}(x_0))\\\leqslant\ & C(K,n)\max\{{R_1}^{-1},(D-R_2)^{-1}\}\frac{\mathscr{H}^n(B_{R_1}(x_0))}{V_{K,n}(R_1)}(V_{K,n}(R_2)-V_{K,n}(R_1))\\
\leqslant\ & C(K,n,D)\max\{{R_1}^{-1},(D-R_2)^{-1}\}(R_2-R_1).\\
\end{aligned}
\]
Thus $R\mapsto \int_{A}\sigma_{x_0}(\cdot,R)\mathop{\mathrm{d}\mathfrak{q}_{x_0}}$ is locally Lipschitz continuous on $(0,D)$. Combining (\ref{eqn4.9fdafijoaje}) with co-area formula then implies that $R\mapsto \mathscr{H}^{n-1}(F_{D/3\to R}(A))$ is also locally Lipschitz continuous. Hence (\ref{eqn4.9fdafijoaje}) holds for every $R\in (0,D)$.}

\end{remark}
 


\begin{prop}\label{fehohfaoehf}
Fix $x_0\in X$, $D_0\in (0,D)$. For any $p,q\in \partial B_{D_0}(x_0)$, the map $x\mapsto \langle\nabla\mathsf{d}_p,\nabla\mathsf{d}_q\rangle(x)$ is $\alpha$-H\"{o}lder continuous on $B_{r_0}(x_0)$ for some $\alpha=\alpha(n)$, $r_0=r_0(K,n,D_0)$.
\end{prop}
\begin{proof}
Let $r_1=\min\{D_0/1000,(D-D_0)/1000,1\}$. According to Rmk. \ref{step2}, for any $y\in B_{r_1}(x_0)$, there exist $p',q'\in X$ such that $y$ is both the midpoint of $\gamma_{p,p'}$ and $\gamma_{q,q'}$. Thm. \ref{thmhiojiofjea} guarantees the existence of $r_0=r_0(K,n,D_0)<r_1$ and $\tilde{\alpha}(n)<1$ such that for all $r\in (0,r_0)$ and $x\in B_{r}(y)$ it holds
\begin{align}\label{fehjofiheoiaf}
\fint_{B_r(x)}\left|\langle\nabla \mathsf{d}_{p},\nabla\mathsf{d}_{q}\rangle-\fint_{B_r(x)}\langle\nabla \mathsf{d}_{p},\nabla\mathsf{d}_{q}\rangle\,\mathrm{d}\mathscr{H}^n\right|\mathrm{d}\mathscr{H}^n\leqslant C(K,n)\, r^{\tilde{\alpha}(n)}.
\end{align}
Indeed, (\ref{fehjofiheoiaf}) holds for all $r\in (0,r_0)$ and $x\in B_{r_0}(x_0)$ since letting $y=\gamma_{x,x_0}(r/2)$ implies $x\in B_r(y)$ and $y\in B_{r_0}(x_0)$. Now for any $0<s<t<r_0$, (\ref{fehjofiheoiaf}) as well as Thm. \ref{BGineq} implies: 
\begin{equation}\label{fueahofahoe}
\begin{aligned}
&\left|\fint_{B_{s}(x)}\langle\nabla \mathsf{d}_{p},\nabla\mathsf{d}_{q}\rangle\,\mathrm{d}\mathscr{H}^n-\fint_{B_{t}(x)}\langle\nabla \mathsf{d}_{p},\nabla\mathsf{d}_{q}\rangle\,\mathrm{d}\mathscr{H}^n\right|\\
\leqslant\ &\fint_{B_{s}(x)}\left|\langle\nabla \mathsf{d}_{p},\nabla\mathsf{d}_{q}\rangle-\fint_{B_{t}(x)}\langle\nabla \mathsf{d}_{p},\nabla\mathsf{d}_{q}\rangle\,\mathrm{d}\mathscr{H}^n\right|\mathrm{d}\mathscr{H}^n\\
\leqslant\ &\frac{\mathscr{H}^n(B_{t}(x))}{\mathscr{H}^n(B_{s}(x))}\fint_{B_{t}(x)}\left|\langle\nabla \mathsf{d}_{p},\nabla\mathsf{d}_{q}\rangle-\fint_{B_{t}(x)}\langle\nabla \mathsf{d}_{p},\nabla\mathsf{d}_{q}\rangle\,\mathrm{d}\mathscr{H}^n\right|\mathrm{d}\mathscr{H}^n\leqslant C(K,n)\, {t}^{n+\tilde{\alpha}}{s}^{-n}.
\end{aligned}
\end{equation}
This ensures the uniform convergence of $\lim_{i\to\infty}\fint_{B_{2^{-i}}(x)}\langle\nabla \mathsf{d}_{p},\nabla\mathsf{d}_{q}\rangle\,\mathrm{d}\mathscr{H}^n$ on $B_{r_0}(x_0)$. Given any sequence $s_i\to 0$, since there exists $\{a_i\}\subset \mathbb{N}$ such that $2^{a_i}\leqslant {s_i}^{-1}\leqslant 2^{a_i+1}$, it then follows from (\ref{fueahofahoe}) that 
\[
\lim_{i\to\infty}\fint_{B_{2^{-i}}(x)}\langle\nabla \mathsf{d}_{p},\nabla\mathsf{d}_{q}\rangle\,\mathrm{d}\mathscr{H}^n=\lim_{i\to\infty}\fint_{B_{s_i}(x)}\langle\nabla \mathsf{d}_{p},\nabla\mathsf{d}_{q}\rangle\,\mathrm{d}\mathscr{H}^n:=F(x).
\]

Applying (\ref{fueahofahoe}) again, for any $r<r_0$ and $x_1,x_2\in B_{r_0}(x_0)$ we obtain:
\[
\begin{aligned}
&\left|\fint_{B_{r}(x_i)}(\langle\nabla \mathsf{d}_{p},\nabla\mathsf{d}_{q}\rangle-F(x_i))\,\mathrm{d}\mathscr{H}^n\right|\\
\leqslant\ &\sum_{j=0}^\infty\left|\fint_{B_{2^{-j} r}(x_i)}\langle\nabla \mathsf{d}_{p},\nabla\mathsf{d}_{q}\rangle\,\mathrm{d}\mathscr{H}^n-\fint_{B_{2^{-(j+1)}r}(x_i)}\langle\nabla \mathsf{d}_{p},\nabla\mathsf{d}_{q}\rangle\,\mathrm{d}\mathscr{H}^n\right|\leqslant\, C(K,n)\, r^{\tilde{\alpha}}.
\end{aligned}
\]
Assume $\mathsf{d}(x_1,x_2)<r$. Combining the above inequality with Thm. \ref{BGineq} and the volume homogeneity assumption, we derive:
\[
\begin{aligned}
&|F(x_1)-F(x_2)|\leqslant \left|\fint_{B_r(x_1)}\langle\nabla \mathsf{d}_{p},\nabla\mathsf{d}_{q}\rangle\,\mathrm{d}\mathscr{H}^n-\fint_{B_r(x_2)}\langle\nabla \mathsf{d}_{p},\nabla\mathsf{d}_{q}\rangle\,\mathrm{d}\mathscr{H}^n\right|\\
&+\sum_{i=1,2}\left|\fint_{B_{r}(x_i)}(\langle\nabla \mathsf{d}_{p},\nabla\mathsf{d}_{q}\rangle-F(x_i))\,\mathrm{d}\mathscr{H}^n\right|\\
\leqslant\  &\frac{1}{\mathscr{H}^n(B_r(x_2))}\int_{B_r(x_1)\triangle B_r(x_2)}|\langle\nabla \mathsf{d}_{p},\nabla\mathsf{d}_{q}\rangle|\,\mathrm{d}\mathscr{H}^n+\sum_{i=1,2}\left|\fint_{B_{r}(x_i)}(\langle\nabla \mathsf{d}_{p},\nabla\mathsf{d}_{q}\rangle-F(x_i))\,\mathrm{d}\mathscr{H}^n\right|\\
\leqslant\ &\frac{\mathscr{H}^n(B_r(x_1)\triangle B_r(x_2))}{\mathscr{H}^n(B_r(x_2))}+C(K,n)\,r^{\tilde{\alpha}}\leqslant\ \frac{\mathscr{H}^n(A_{r-\mathsf{d}(x_1,x_2),r+\mathsf{d}(x_1,x_2)}(x_2))}{\mathscr{H}^n(B_r(x_2))}+C(K,n)\,r^{\tilde{\alpha}}\\
\leqslant\ & C(K,n)\left(\left(1+\frac{\mathsf{d}(x_1,x_2)}{r}\right)^n-\left(1-\frac{\mathsf{d}(x_1,x_2)}{r}\right)^n+r^{\tilde{\alpha}}\right).
\end{aligned}
\]

Finally, for any $x_1,x_2\in B_{r_0}(x_0)$ with $\mathsf{d}(x_1,x_2)\leqslant {r_0}^2<1$, plugging $r=\sqrt{\mathsf{d}(x_1,x_2)}$ into the above inequality yields 
\[
|F(x_1)-F(x_2)|\leqslant C(K,n) {(\mathsf{d}(x_1,x_2))}^\alpha,\ \ \text{where}\ \alpha=\tilde{\alpha}/2.
\] 
By Lebesgue differentiation Thm., $F$ is the unique $\alpha$-H\"{o}lder continuous representative of $\langle\nabla \mathsf{d}_p,\nabla\mathsf{d}_q\rangle$ on $B_{r_0}(x_0)$.

\end{proof}
\begin{cor}\label{fioaehfoiaehiof}
For any $\varepsilon>0, D_0\in (0,D)$, there exists $\delta\in (0,\varepsilon)$ such that if $\mathsf{d}(x,p)=\mathsf{d}(x,q)=D_0$, then 
\[
|\mathsf{d}^2(\gamma_{x,p}(t),\gamma_{x,q}(t))/t^2+\langle \nabla \mathsf{d}_p,\nabla\mathsf{d}_q\rangle(x)-2|\leqslant \varepsilon, \, \forall t\in (0,\delta).
\]
\end{cor}

\begin{proof}
Assume the contrary the existence of $\varepsilon_0>0$ and three sequences of points $\{x_i\}$, $\{p_i\}$ and $\{q_i\}$ such that $p_i,q_i\in \partial B_{D_0}(x_i)$ and that for some $s_i\to 0$ it holds
\[
|\mathsf{d}^2(\gamma_{x_i,p_i}(s_i),\gamma_{x_i,q_i}(s_i))/{s_i}^2+\langle \nabla \mathsf{d}_{p_i},\nabla\mathsf{d}_{q_i}\rangle(x_i)-2|\geqslant \varepsilon_0, \ \forall i.
\]

Recall that our volume homogeneity condition yields (\ref{fhoeahofihaefi}) and (\ref{vpxbbctt}). Therefore, applying \cite[Thm. 1.6]{DG18} shows 
\begin{equation}\label{feahfoeahofi}
\sup_{x\in X}\,\mathsf{d}_{\text{pmGH}}\left((X,r^{-1}\mathsf{d},\mathscr{H}^n,x),\left(\mathbb{R}^n,\mathsf{d}_{\mathbb{R}^n},\mathscr{L}^n,0_n\right)\right)\leqslant \Psi(r|K,n,D).
\end{equation}
This implies
\[
(X_i,\mathsf{d}_i,\mathscr{H}^n,x_i):=(X,{s_i}^{-1}\mathsf{d},\mathscr{H}^n,x_i)\xrightarrow{\text{pmGH}}\left(\mathbb{R}^n,\mathsf{d}_{\mathbb{R}^n},\mathscr{L}^n,0_n\right).
\]

By (\ref{fdahfuafhaeiohfo}), Thms. \ref{AAthm} and \ref{AH18}, there exist $\vec{a},\vec{b}\in \mathbb{S}^{n-1}$ such that $\{{s_i}^{-1}(\mathsf{d}_{p_i}-D_0)\}$ and $\{{s_i}^{-1}(\mathsf{d}_{q_i}-D_0)\}$ uniformly converges to linear functions {$(\vec{x}\mapsto \vec{a}\cdot \vec{x})$, $(\vec{x}\mapsto \vec{b}\cdot \vec{x})$} respectively on $B_2(0_n)\subset \mathbb{R}^n$ as $i\to\infty$.

Observe that 
\[
{s_i}^{-1}(\mathsf{d}_{p_i}(\gamma_{x_i,p_i}(s_i t))-D_0)={s_i}^{-1}(\mathsf{d}_{q_i}(\gamma_{x_i,q_i}(s_i t))-D_0)=t,\, \forall t\in [0,2]. 
\]Therefore, as $i\to\infty$, $\gamma_{x_i,p_i}(s_i)\to \vec{a}$ and $\gamma_{x_i,q_i}(s_i)\to \vec{b}$. This implies \[\lim_{i\to\infty}{s_i}^{-1}\mathsf{d}(\gamma_{x_i,p_i}(s_i),\gamma_{x_i,q_i}(s_i))=|\vec{b}-\vec{a}|.
\]

However, Prop. \ref{fehohfaoehf} combined with $H^{1,2}$-convergences of $\{{s_i}^{-1}(\mathsf{d}_{p_i}-D_0)\}$ and $\{{s_i}^{-1}(\mathsf{d}_{q_i}-D_0)\}$ on $B_1(0_n)$ yields a contradiction, since
\[
\lim_{i\to\infty}\langle\nabla \mathsf{d}_{p_i},\nabla \mathsf{d}_{q_i}\rangle(x_i)=\lim_{i\to\infty}\fint_{B_{s_i}(x_i)}\langle\nabla \mathsf{d}_{p_i},\nabla \mathsf{d}_{q_i}\rangle\,\mathrm{d}\mathscr{H}^n=\vec{a}\cdot\vec{b}.
\]
\end{proof}
\begin{prop}[Local bi-Lipschitz coordinates]\label{step4}
For $D_0\in (0,D)$, there exists $r_0=r_0(K,n,D_0)<\min\{D_0/100,(D-D_0)/100\}$ with the following property: given any $x_0\in X$ and $r\in (0,r_0)$,  one can find $n$ points $\{p_i\}\subset \partial B_{D_0}(x_0)$ such that the following holds.
\begin{itemize}
\item 
\[
|\langle\nabla \mathsf{d}_{p_i},\nabla \mathsf{d}_{p_j}\rangle (x_0)-\delta_{ij}|\leqslant \Psi(r|K,n,D).
\]
\item The map
\[
\begin{aligned}
\textbf{U}:B_r(x_0)&\longrightarrow \mathbf{U}(B_r(x_0))\subset\mathbb{R}^n\\
y&\longmapsto(u_1(y),\ldots,u_n(y)),
\end{aligned}
\] 
is bijective, where $u_i:=\mathsf{d}_{p_i}-D_0$. \item $\textbf{U}$, $\textbf{U}^{-1}|_{\textbf{U}(B_r(x_0))}$ are both $(1+\Psi(r|K,n,D))$-Lipschitz continuous.
\end{itemize}
\end{prop}
\begin{proof}
By (\ref{feahfoeahofi}), there exists $r_1=r_1(K,n,D)$ such that 
\[
\sup_{x\in X}\,\mathsf{d}_{\text{pmGH}}\left((X,r^{-1}\mathsf{d},\mathscr{H}^n,x),\left(\mathbb{R}^n,\mathsf{d}_{\mathbb{R}^n},\mathscr{L}^n,0_n\right)\right)\leqslant \Psi(r|K,n,D)<1/1000, \ \forall r<r_1.
\]

For every $x_0\in X$ and every $r<r_1$, Gromov-Hausdorff approximation guarantees the existence of $n$ points $\{q_i\}_{i=1}^n \subset \partial B_r(x_0)$ such that $\mathsf{d}(q_i,q_j)=(\sqrt{2}+\Psi)r$ for any $i\neq j$. Set $p_i:=\gamma_{x_0,q_i}(D_0)$. Cor. \ref{fioaehfoiaehiof} implies $|\langle \nabla \mathsf{d}_{p_i},\nabla\mathsf{d}_{p_j}\rangle(x)-\delta_{ij}|\leqslant \Psi$. 

Since Prop. \ref{fehohfaoehf} guarantees the $\alpha(n)$-H\"{o}lder continuity of the functions $\langle \nabla \mathsf{d}_{p_i},\nabla\mathsf{d}_{p_j}\rangle$ on $B_{r_2}(x_0)$ with $r_2=r_2(K,n,D_0)$, we see
\[
\max_{B_{2r}(x_0)}|\langle \nabla \mathsf{d}_{p_i},\nabla\mathsf{d}_{p_j}\rangle-\delta_{ij}|\leqslant \Psi(r|K,n,D), \ \forall r<r_0:=\min\{r_1,r_2\}.
\]
Finally, the conclusion follows from a combination of H\"{o}lder's inequality and \cite[Thm. 3.4]{H21}.

\end{proof}

We are now in a position to prove the following key proposition.
\begin{prop}\label{step5} Given any fixed point $\bar{x}\in X$, it holds that 
\[
\Delta \,\mathsf{d}_{\bar{x}}(y)=\Delta\, \mathsf{d}_{\bar{x}}(z), \ \forall y,z \in \partial B_R(\bar{x}),\ \forall R\in (0,D).
\]
In other words, $\Delta\, \mathsf{d}_{\bar{x}}$ is radial with respect to $\bar{x}$.
\end{prop}
\begin{proof}
Fix $x_0\in X$ with $\mathsf{d}(\bar{x},x_0)=D_0\in (0,D)$. By Prop. \ref{step4}, there exists $r_0=r_0(K,n,D_0)>0$ and a bi-Lipschitz map $\textbf{U}=(u_1,\ldots,u_n):B_{r_0}(x_0)\rightarrow \mathbb{R}^n$ such that $u_n=\mathsf{d}_{\bar{x}}-D_0$ and $|\langle\nabla u_i,\nabla u_j\rangle-\delta_{ij}|\leqslant 1/10000$ on $B_{r_0}(x_0)$. 

Due to the volume homogeneity assumption, it suffices to show that for $\mathscr{H}^n$-a.e. $x\in B_{r_0}(x_0)$ the following holds.
\begin{equation}\label{aaa4.16}
2\mathscr{H}^n\left(B_t(x)\setminus B_{\mathsf{d}(\bar{x},x)}({}{\bar{x}})\right)-\omega_n t^{n}=2\omega_n t^{n+1}\Delta \,\mathsf{d}_{\bar{x}}(x)+o(t^{n+1}),\ \text{as}\ t\downarrow 0.
\end{equation}
\

\textbf{Step 1 Construction of good coordinate functions}

\

Let us take 
\[
x\in \bigcap_{j,k=1}^n \mathrm{Leb}(\langle \mathrm{Hess} \mathop{u_j},\mathrm{Hess}\mathop{ u_k}\rangle)\cap \bigcap_{j,k,l=1}^n\mathrm{Leb}(\mathrm{Hess}\mathop{u_j}(\nabla u_k,\nabla u_l)) \cap B_{r_0}(x_0),
\]
 and constants $a_{ij}$ ($i,j=1,\ldots,n$) and $\Gamma^i_{jk}$, $b^i_{jk}$ ($i,j,k=1,\ldots,n$) {}{such that 
 \begin{equation}\label{4.18c}
\sum_{k,l=1}^n a_{ik}a_{jl}\langle \nabla u_k,\nabla u_l\rangle(x)=\delta_{ij},\ \Gamma^i_{jk}=\Gamma^i_{kj}=\mathrm{g}_{jl}\mathrm{g}_{km}\mathrm{Hess}\,u_i(\nabla u_l,\nabla u_m),\ -2b^i_{jk}=a_{il}\Gamma^l_{jk},
\end{equation}
where $(\mathrm{g}_{ij})$ is the inverse matrix of $(\langle \nabla u_i,\nabla u_j\rangle(x))$. 

Since 
\[
\lim_{t\to 0}\fint_{B_t(x)} |\mathrm{Hess}\, u_i-\Gamma_{jk}^i \nabla u_j\otimes \nabla u_k|_{\mathsf{HS}}^2\,\mathrm{d}\mathscr{H}^n=0,
\]}the functions $v_i=\sum_{j=1}^n a_{ij}(u_j-u_j(x))+\sum_{j,k=1}^n b^i_{jk}(u_j-u_j(x)) (u_k-u_k(x))$ satisfy the following properties.
 \begin{itemize}
 \item  $\sum_{i=1}^n\left(\| \nabla v_i\|_{L^\infty(B_{r_0}(x))}+\| \Delta v_i\|_{L^\infty(B_{r_0}(x))}\right)\leqslant C$, for some constant $C$ that may vary from line to line.
 \item $v_i(x)=0$ (for $i=1,\ldots,n$), $\langle\nabla v_i,\nabla v_j\rangle(x)=\delta_{ij}$.
 \item \begin{equation}\label{4.17aaaaaa}
\lim_{t\rightarrow 0}\fint_{B_t(x)} |\mathrm{Hess}\mathop{v_i}|_{\mathsf{HS}}^2\,\mathrm{d}\mathscr{H}^n=0,\  \text{for}\ i=1,\ldots,n. 
\end{equation}
 \end{itemize}

{}{Since $\langle\nabla v_i,\nabla v_j\rangle(x)=\delta_{ij}$, using Thm.~\ref{BGineq}, (1,2)-Poincar\'{e} inequality and (\ref{4.17aaaaaa}), and following a standard telescope argument shows that for each $i,j=1,\ldots,n$,
\begin{equation}\label{eqn4.22}
\begin{aligned}
&\fint_{B_t(x)}|\langle\nabla v_i,\nabla v_j\rangle-\delta_{ij}|\mathop{\mathrm{d}\mathscr{H}^n}
\leqslant  \fint_{B_{t}(x)}\left|\langle\nabla v_i,\nabla v_j\rangle-\fint_{B_{t/2}(x)}\langle\nabla v_i,\nabla v_j\rangle\, \mathrm{d}\mathscr{H}^n\right|\mathop{\mathrm{d}\mathscr{H}^n}\\
&+\sum_{l=1}^\infty\left|\fint_{B_{2^{-l}t}(x)}\langle\nabla v_i,\nabla v_j\rangle\, \mathrm{d}\mathscr{H}^n-\fint_{B_{2^{-(l+1)}t}(x)}\langle\nabla v_i,\nabla v_j\rangle\, \mathrm{d}\mathscr{H}^n\right|\mathop{\mathrm{d}\mathscr{H}^n}\\
\leqslant \ &\sum_{l=0}^\infty\fint_{B_{2^{-(l+1)}t}(x)}\left|\langle\nabla v_i,\nabla v_j\rangle-\fint_{B_{2^{-l}t}(x)}\langle\nabla v_i,\nabla v_j\rangle\, \mathrm{d}\mathscr{H}^n\right|\mathop{\mathrm{d}\mathscr{H}^n}\\
\leqslant \ &\sum_{l=0}^\infty\frac{\mathscr{H}^n\left(B_{2^{-l}t}(x)\right)}{\mathscr{H}^n\left(B_{2^{-(l+1)}t}(x)\right)}\fint_{B_{2^{-l}t}(x)}\left|\langle\nabla v_i,\nabla v_j\rangle-\fint_{B_{2^{-l}t}(x)}\langle\nabla v_i,\nabla v_j\rangle\, \mathrm{d}\mathscr{H}^n\right|\mathop{\mathrm{d}\mathscr{H}^n}\\
\leqslant \ &C(K,n)\sum_{l=0}^\infty 2^{-l}t\fint_{B_{2^{-(l+1)}t}(x)}\left({|\mathrm{Hess}\,v_i|_{\mathsf{HS}}}^2+{|\mathrm{Hess}\,v_j|_{\mathsf{HS}}}^2\right)\mathop{\mathrm{d}\mathscr{H}^n}=o(t), \ \ \text{as}\ t\downarrow 0.
\end{aligned}
\end{equation}}

According to the $\alpha(n)$-H\"{o}lder continuity of $\langle\nabla u_i,\nabla u_j\rangle$, one has for each $i,j=1,\ldots,n$, 
\[
|\langle \nabla v_i,\nabla v_j\rangle(y)-\delta_{ij}|=|\langle \nabla v_i,\nabla v_j\rangle(y)-\langle \nabla v_i,\nabla v_j\rangle(x)|\leqslant C(\mathsf{d}(x,y))^\alpha,\ \forall y\in B_{r_0}(x_0).
\]
This implies 
\begin{align}\label{4.3000000}
\sup_{B_{2t}(x)}|\langle \nabla v_i,\nabla v_j\rangle-\delta_{ij}|\leqslant C\,t^\alpha<1/10000,\ \forall t\ll 1.
\end{align}

For convenience, denote by $\textbf{V}=(v_1,\ldots,v_n)$ and by $V(z)$ the matrix $(\langle \nabla v_i,\nabla v_j\rangle(z))_{ij}$ for $z\in B_t(x)$. Applying \cite[Thm.~3.4]{H21} yields that $\mathbf{V}|_{B_t(x)}:B_t(x)\to \mathbf{V}(B_t(x))$ is bijective and the restrictions $\textbf{V}|_{B_{t}(x)}$ and $\textbf{V}^{-1}|_{\textbf{V}(B_{t}(x))}$ are both $(1+\Psi(t|C))$-Lipschitz continuous. By adapting the proof of \cite[Lem.~4.7]{H23}, we verify that 
\begin{equation}\label{fheaohfoahfiahfio}
\textbf{V}_\sharp \left(\mathscr{H}^{n}\llcorner_{B_t(x)}\right)=\left(\det V\circ\mathbf{V}^{-1} \right)^{-1/2}\mathscr{L}^n\llcorner_{\mathbf{V}(B_t(x))}.
\end{equation}

{Let $E$ be an $n\times n$ invertible matrix such that $EV(z)E^T=I_n$. Then the map $\mathbf{W}=(w_1,\ldots,w_n)$ defined by $w_i=\sum_{j}E_{ij}v_j$ ($i=1,\ldots,n$) satisfies
\[
\lim_{y\to z}\frac{{|\mathbf{W}(y)-\mathbf{W}(z)|}^2}{\mathsf{d}^2(y,z)}=1.
\]
Therefore, we obtain
\begin{equation}\label{fiehfioafiohaio}
\left\{
\begin{aligned}
& \limsup_{y\to z}\frac{{|\mathbf{V}(y)-\mathbf{V}(z)|}^2}{{\mathsf{d}}^2(y,z)}=\lambda_{\mathrm{max}}((EE^T)^{-1})=\lambda_{\mathrm{max}}((E^T E)^{-1})=\lambda_{\mathrm{max}}(V(z)),\\
& \liminf_{y\to z}\frac{{|\mathbf{V}(y)-\mathbf{V}(z)|}^2}{\mathsf{d}^2(y,z)}=\lambda_{\mathrm{min}}(V(z)),
\end{aligned}\right.
 \end{equation}
 where $\lambda_{\mathrm{max}}(V(z))$ (resp. $\lambda_{\mathrm{min}}(V(z))$) is the largest (resp. smallest) eigenvalue of $V(z)$. {Moreover, by Prop. \ref{prop5.1} and Corollary \ref{cor5.2} we have the following estimates}:
\begin{equation}\label{4.38}
\frac{\sum_{i} {|\nabla v_i|}^2(z)}{n}\leqslant \lambda_{\max}(V(z))\leqslant 1+\sum_{i,j}|\langle\nabla v_i,\nabla v_j\rangle(z)-\delta_{ij}|,
\end{equation} 
\begin{equation}\label{4.39}
\mathrm{det}(V(z))(\lambda_{\mathrm{max}}(V(z)))^{1-n}\leqslant \lambda_{\min}(V(z))\leqslant \frac{1}{n}\sum_{i=1}^n {|\nabla v_i|}^2(z).
\end{equation}}

\

\ 
\textbf{Step 2 Asymptotic estimates after pulling back to Euclidean space}
\ 

\ 

By (\ref{eqn4.22}) and (\ref{fheaohfoahfiahfio}), we have
\begin{equation}\label{aabb4.25}
\begin{aligned}
&\mathscr{H}^n\left(B_{t}(x)\setminus B_{\mathsf{d}(x,\bar{x})}({}{\bar{x}})\right)\\
=\ &\int_{B_{t}(x)\cap \{u_n\geqslant u_n(x) \}}\left(\det V \right)^{\frac{1}{2}}\mathop{\mathrm{d}\mathscr{H}^n}+\int_{B_{t}(x)\cap \{u_n\geqslant u_n(x) \}}\left(1-\left(\det V \right)^{\frac{1}{2}}\right)\mathop{\mathrm{d}\mathscr{H}^n}\\
=\ &\mathscr{L}^n\Big{(}\textbf{V}\big{(}B_t(x)\cap\{u_n\geqslant u_n(x)\}\big{)}\Big{)}+o(t^{n+1}), \ \ \text{as}\ t\downarrow 0.
\end{aligned}
\end{equation}

We claim
\begin{equation}\label{4.41a}
 \mathscr{L}^n\left(\textbf{V}(B_t(x))\triangle B_t(0_n)\right)=o(t^{n+1}),  \ \ \text{as}\ t\downarrow 0.
 \end{equation}
 
 Set $r:=\sqrt{\sum {v_i}^2}$, $r_+:=\max\{r,\mathsf{d}_x\}$ and $\Omega_t:=\mathbf{V}(\{r_+\leqslant t\})$. For simplicity, we denote by $\tilde{\nabla}, \tilde{\Delta}$ the gradient and Laplacian on $\mathbb{R}^n$, and by $\tilde{f}:=f\circ \mathbf{V}^{-1}$ for $f\in \mathrm{Lip}(B_{4t}(x),\mathsf{d})$. From (\ref{fiehfioafiohaio}) and \cite[Thm.~5.1]{Ch99} we know for $\mathscr{H}^n$-a.e. $z\in B_{2t}(x)$ it holds
{}{\begin{equation}\label{4.42}
 C^{-1}{|\nabla f|}^2(z)\leqslant \frac{{|\nabla f|}^2\left(z\right)}{\lambda_{\max}(V(z))}\leqslant {|\tilde{\nabla} \tilde{f}|}^2(\mathbf{V}(z))\leqslant \frac{{|\nabla f|}^2(z)}{\lambda_{\min}(V(z))}\leqslant C{|\nabla f|}^2(z). 
 \end{equation}}

Let us recall that in{}{\cite[Lem.~3.5, Prop.~4.8]{BMS23}} Bru\'{e}-Mondino-Semola proved
\begin{align}\label{a4.39}
	\int_{0}^t\frac{1}{s^{n+1}}\int_{\partial B_s(x)}r{|\nabla(r-\mathsf{d}_x)|}^2\,\mathrm{d}\mathscr{H}^{n-1}\mathrm{d}s=o(t),\ \text{as}\ t\downarrow 0.
\end{align}
It should be emphasized that the above asymptotic behavior played an essential role in proving \cite[Prop.~4.1]{BMS23}, namely \begin{align}\label{fnaeifhaeiohfeao}
	\mathscr{H}^n(B_t(x))=\omega_n t^n+o(t^{n+1})\  \text{as}\  t\downarrow 0,\  \text{for}\  \mathscr{H}^n\text{-a.e.}\  x\in X.
\end{align}

We first show  
 \begin{equation}\label{fehiaofhoaehfoahfoiahof}
 \mathscr{L}^n(\Omega_t)=\omega_n t^n+o(t^{n+1}), \ \ \text{as}\ t\downarrow 0.
 \end{equation}

Notice that (\ref{eqn4.22}) gives
\begin{equation}\label{fejffaef}
\begin{aligned}
&\int_{B_t(x)} \frac{1}{{\mathsf{d}_x}^{n}}|\langle\nabla v_i,\nabla v_j\rangle-\delta_{ij}|\,\mathrm{d}\mathscr{H}^n=\sum_{l=0}^\infty\int_{A_{2^{-(l+1)}t,2^{-l}t}(x)} \frac{1}{{\mathsf{d}_x}^{n}}|\langle\nabla v_i,\nabla v_j\rangle-\delta_{ij}|\,\mathrm{d}\mathscr{H}^n\\
&\leqslant \sum_{l=0}^\infty \frac{2^{n}}{{(2^{-l}t)}^n}\int_{B_{2^{-l}t}(x)} |\langle\nabla v_i,\nabla v_j\rangle-\delta_{ij}|\,\mathrm{d}\mathscr{H}^n=o(t),\ \text{as}\ t\downarrow 0.
\end{aligned}
\end{equation}

For $t\ll 1$ we may assume $\Omega_t\subset \mathbf{V}(B_{2t}(x))$ and $\Omega_t\subset B_{2t}(0_n)$, because $\mathbf{V}|_{B_{4t}(x)}$ is $(1+\Psi(t|C))$-bi-Lipschitz equivalent with its image. We first estimate that
\[
\begin{aligned}
&\int_{\Omega_t}\frac{\tilde{r}}{(\tilde{r}_+)^{n+1}}{|\tilde{\nabla} (\tilde{r}-\tilde{r}_+)|}^2\,\mathrm{d}\mathscr{L}^n\leqslant \int_{\mathbf{V}(B_{2t}(x))}\frac{\tilde{r}}{(\tilde{r}_+)^{n+1}}{|\tilde{\nabla} (\tilde{r}-\tilde{r}_+)|}^2\,\mathrm{d}\mathscr{L}^n\\
=\ &\int_{\mathbf{V}(B_{2t}(x))}\frac{\tilde{r}}{(\tilde{r}_+)^{n+1}}{|\tilde{\nabla} (\tilde{r}-\tilde{r}_+)|}^2\left(1-\left(\tilde{\det V}\right)^{-\frac{1}{2}}\right)\,\mathrm{d}\mathscr{L}^n\\
&+\int_{\mathbf{V}(B_{2t}(x))}\frac{\tilde{r}}{(\tilde{r}_+)^{n+1}}{|\tilde{\nabla} (\tilde{r}-\tilde{r}_+)|}^2\left(\tilde{\det V}\right)^{-\frac{1}{2}}\,\mathrm{d}\mathscr{L}^n:=I_1+I_2.
\end{aligned}
\]
We remark that $\tilde{r}$ is indeed the distance from the origin. 

On $B_{4t}(x)$, since $|\nabla v_i|\leqslant C$, $|\nabla r|\leqslant C$. Thus $|\nabla (r- r_+)|=\chi_{\{r<\mathsf{d}_x\}}|\nabla(r-\mathsf{d}_x)|\leqslant C$. From (\ref{4.3000000}), (\ref{4.38}), (\ref{4.39}) and (\ref{4.42}) we see $|\tilde{\nabla} (\tilde{r}-\tilde{r}_+)|\leqslant C$. This as well as (\ref{fheaohfoahfiahfio}) and (\ref{fejffaef}) yields
\[
\begin{aligned}
I_1\leqslant \ &C\int_{\mathbf{V}(B_{2t}(x))}\frac{\tilde{r}}{(\tilde{r}_+)^{n+1}}\left|\left(\tilde{\det V}\right)^{-\frac{1}{2}}-1\right|\,\mathrm{d}\mathscr{L}^n=C\int_{B_{2t}(x)}\frac{r}{(r_+)^{n+1}}\left|\left(\det V\right)^{\frac{1}{2}}-1\right|\,\mathrm{d}\mathscr{H}^n\\
\leqslant \ &C\int_{B_{2t}(x)}\frac{1}{{\mathsf{d}_x}^{n}}\left|\left(\det V\right)^{\frac{1}{2}}-1\right|\,\mathrm{d}\mathscr{H}^n=o(t),\ \text{as}\ t\downarrow 0.
\end{aligned}
\]

Applying (\ref{fheaohfoahfiahfio}), (\ref{4.38}), (\ref{4.39}), (\ref{4.42}) and  also implies 
\[
\begin{aligned}
I_2\leqslant \ &C\int_{\mathbf{V}(B_{2t}(x))}\frac{\tilde{r}}{(\tilde{r}_+)^{n+1}}{|\nabla (r-r_+)|}^2\circ \mathbf{V}^{-1}\left(\tilde{\det V}\right)^{-\frac{1}{2}}\,\mathrm{d}\mathscr{L}^n\\
= \ &C\int_{B_{2t}(x)}\frac{r}{(r_+)^{n+1}}{|\nabla (r-r_+)|}^2\,\mathrm{d}\mathscr{H}^n=C\int_{B_{2t}(x)\cap\{r<\mathsf{d}_x\}}\frac{r}{(r_+)^{n+1}}{|\nabla (r-\mathsf{d}_x)|}^2\,\mathrm{d}\mathscr{H}^n\\
\leqslant \ &C\int_{B_{2t}(x)}\frac{r}{{\mathsf{d}_x}^{n+1}}{|\nabla (r-\mathsf{d}_x)|}^2\,\mathrm{d}\mathscr{H}^n=o(t),\ \text{as}\ t\downarrow 0,
\end{aligned}
\]
where the last equality follows from a combination of (\ref{a4.39}) and co-area formula. Therefore we know
\begin{align}\label{faifaehoifjeaijfaeio}
\int_{\Omega_t}\frac{\tilde{r}}{(\tilde{r}_+)^{n+1}}{|\tilde{\nabla} (\tilde{r}-\tilde{r}_+)|}^2\,\mathrm{d}\mathscr{L}^n=o(t),\ \text{as}\ t\downarrow 0.
\end{align}

By (\ref{4.42}), for $\mathscr{L}^n$-a.e. $ \tilde{z}\in B_{2t}(0_n)$ it holds
\[
\begin{aligned}
\left|{|\tilde{\nabla}\tilde{\mathsf{d}_x}|}^2-1\right|(\tilde{z})\ &\leqslant C\max\left\{\left|\lambda_{\max}(V\circ \mathbf{V}^{-1}(\tilde{z}))-1\right|,\left|\lambda_{\min}(V\circ \mathbf{V}^{-1}(\tilde{z}))-1\right|\right\}\\
&\leqslant C\left(\left|\lambda_{\max}(V\circ \mathbf{V}^{-1}(\tilde{z}))-1\right|+\left|\lambda_{\min}(V\circ \mathbf{V}^{-1}(\tilde{z}))-1\right|\right)
\end{aligned}.
\]
Then similarly by (\ref{fheaohfoahfiahfio}), (\ref{4.38}), (\ref{4.39}) and (\ref{fejffaef}) we have
\begin{align}\label{feiaofhoaihfiae}
\int_{B_{2t}(0_n)}\frac{\tilde{r}}{(\tilde{r}_+)^{n+1}}\left|{|\tilde{\nabla}\tilde{\mathsf{d}_x}|}^2-1\right|\,\mathrm{d}\mathscr{L}^n+\frac{1}{t^n}\int_{B_{2t}(0_n)}\left|{|\tilde{\nabla}\tilde{\mathsf{d}_x}|}^2-1\right|\,\mathrm{d}\mathscr{L}^n=o(t), \ \  \text{as}\ t\downarrow 0.
\end{align}

From co-area formula and Fubini's Thm. we obtain
\[
\begin{aligned}
&\int_{B_{2t}(0_n)}\frac{\tilde{r}_+-\tilde{r}}{(\tilde{r}_+)^{n+1}}\,\mathrm{d}\mathscr{L}^n\leqslant \int_{B_{2t}(0_n)}\frac{\tilde{r}_+-\tilde{r}}{{\tilde{r}}^{n+1}}\,\mathrm{d}\mathscr{L}^n\\
\leqslant\ &\int_0^{2t}\int_ {\mathbb{S}^{n-1}}s^{-2}\int_{0}^s\left(|\tilde{\nabla} \tilde{r}_+|(\tau\sigma)-1\right)\,\mathrm{d}\tau\mathrm{d}\mathrm{vol}_{{S}^{n-1}}(\sigma)\mathrm{d}s\\
\leqslant\ & \int_0^{2t}\int_ {\mathbb{S}^{n-1}}s^{-2}\int_{0}^s\left||\tilde{\nabla} \tilde{r}_+|-1\right|\,\left(|\tilde{\nabla} \tilde{r}_+|+1\right)(\tau\sigma)\,\mathrm{d}\tau\mathrm{d}\mathrm{vol}_{{S}^{n-1}}(\sigma)\mathrm{d}s\\
=\ & \int_0^{2t}s^{-2}\int_{0}^s \tau^{1-n}\int_{\partial B_\tau(0_n)}\left|{|\tilde{\nabla} \tilde{r}_+|}^2-1\right|\,\mathrm{d}\mathscr{H}^{n-1}\mathrm{d}\tau\mathrm{d}s,
\end{aligned}
\]
while applying integral by parts and (\ref{feiaofhoaihfiae}), recalling the fact that $|\tilde{\nabla} \tilde{r}_+|=\chi_{\{\tilde{r}\geqslant \tilde{\mathsf{d}_x}\}}+\chi_{\{\tilde{r}<\tilde{\mathsf{d}_x}\}}|\tilde{\nabla}\tilde{\mathsf{d}_x}|$ to the last term give \begin{equation}\label{4.46}
\begin{aligned}
&\int_{B_{2t}(0_n)}\frac{\tilde{r}_+-\tilde{r}}{(\tilde{r}_+)^{n+1}}\,\mathrm{d}\mathscr{L}^n
\leqslant \int_0^{2t}\frac{1}{s^{n+1}} \int_{ B_s(0_n)}\left|{|\tilde{\nabla} \tilde{r}_+|}^2-1\right|\,\mathrm{d}\mathscr{L}^{n}\,\mathrm{d}s\\
&+(n-1)\,\omega_n \int_0^{2t}\frac{1}{s^2}\int_0^s\fint_{B_\tau(0_n)}\,\left|{|\tilde{\nabla} \tilde{r}_+|}^2-1\right|\,\mathrm{d}\mathscr{L}^{n}\mathrm{d}\tau\,\mathrm{d}s=o(t),\ \ \text{as}\ t\downarrow 0.
\end{aligned}
\end{equation}

Let
\[
\begin{aligned}
&\int_{\Omega_t}\frac{\tilde{r}}{(\tilde{r}_+)^{n+1}}{|\tilde{\nabla} (\tilde{r}_+-\tilde{r})|}^2\,\mathrm{d}\mathscr{L}^n\\
=\ &\int_{\Omega_t}\frac{\tilde{r}}{(\tilde{r}_+)^{n+1}}(1+{|\tilde{\nabla} \tilde{r}_+|}^2)\,\mathrm{d}\mathscr{L}^n-2\int_{\Omega_t}\frac{\tilde{r}}{(\tilde{r}_+)^{n+1}}\langle\tilde{\nabla} \tilde{r},\tilde{\nabla} \tilde{r}_+\rangle\,\mathrm{d}\mathscr{L}^n:=I_3+I_4.
\end{aligned}
\]
Then (\ref{feiaofhoaihfiae}) and (\ref{4.46}) show
\begin{equation}\label{feahifhaeoifhiao}
\begin{aligned}
I_3=\ &2\int_{\Omega_t\cap\{\tilde{r}>\tilde{\mathsf{d}_x}\}}\frac{\tilde{r}}{(\tilde{r}_+)^{n+1}}\,\mathrm{d}\mathscr{L}^n+\int_{\Omega_t\cap\{\tilde{r}\leqslant \tilde{\mathsf{d}_x}\}}\frac{\tilde{r}}{(\tilde{r}_+)^{n+1}}\left(1+{|\tilde{\nabla} \tilde{\mathsf{d}_x}|}^2\right)\,\mathrm{d}\mathscr{L}^n\\
=\ &2\int_{\Omega_t}\frac{\tilde{r}}{(\tilde{r}_+)^{n+1}}\,\mathrm{d}\mathscr{L}^n+\int_{\Omega_t\cap\{\tilde{r}\leqslant \tilde{\mathsf{d}_x}\}}\frac{\tilde{r}}{(\tilde{r}_+)^{n+1}}\left({|\tilde{\nabla} \tilde{\mathsf{d}_x}|}^2-1\right)\,\mathrm{d}\mathscr{L}^n\\
=\  &2\int_{\Omega_t}\frac{1}{(\tilde{r}_+)^{n}}\,\mathrm{d}\mathscr{L}^n+2\int_{\Omega_t}\frac{\tilde{r}-\tilde{r}_+}{(\tilde{r}_+)^{n+1}}\,\mathrm{d}\mathscr{L}^n+\int_{\Omega_t\cap\{\tilde{r}\leqslant \tilde{\mathsf{d}_x}\}}\frac{\tilde{r}}{(\tilde{r}_+)^{n+1}}\left({|\tilde{\nabla} \tilde{\mathsf{d}_x}|}^2-1\right)\,\mathrm{d}\mathscr{L}^n\\
=\ &2\int_{\Omega_t}\frac{1}{(\tilde{r}_+)^{n}}\,\mathrm{d}\mathscr{L}^n+o(t)=2\int_0^t \frac{\mathscr{H}^{n-1}(\partial \Omega_s)}{s^n}\,\mathrm{d}s+o(t),\ \text{as}\ t\downarrow 0,
\end{aligned}
\end{equation}
where the last equality uses co-area formula. 

{}{As for $I_4$, first fix $s\in (0,t)$ and let $\{\varphi_i\}$ be a sequence of smooth functions on $B_{2t}(0_n)$ which uniformly converges to $\tilde{r}_+$ on $\Omega_t\setminus \Omega_s$ with $\|\varphi_i-\tilde{r}_+\|_{H^{1,2}(\Omega_t\setminus \bar{\Omega}_s)}\to 0$. This implies
\[
\int_{\Omega_t\setminus \Omega_s}\frac{1}{(\tilde{r}_+)^{n+1}}\langle\tilde{\nabla} {\tilde{r}}^2,\tilde{\nabla} \tilde{r}_+\rangle\,\mathrm{d}\mathscr{L}^n=\lim_{i\to\infty} \int_{\Omega_t^i\setminus \Omega_s^i}\frac{1}{{\varphi_i}^{n+1}}\langle\tilde{\nabla} {\tilde{r}}^2,\tilde{\nabla} \varphi_i\rangle\,\mathrm{d}\mathscr{L}^n,
\]
where $\Omega_t^i:=\{\varphi_i\leqslant t\}$. Sard's Thm. guarantees for $\mathscr{L}^1$-a.e. $\mu \in (s,t)$, $\partial \Omega^i_\mu$ is a smooth submanifold. Thus a combination of co-area formula and Gauss-Green formula then gives
\[
\begin{aligned}
&\int_{\Omega_t^i\setminus \Omega_s^i}\frac{1}{{\varphi_i}^{n+1}}\langle\tilde{\nabla} {\tilde{r}}^2,\tilde{\nabla} \varphi_i\rangle\,\mathrm{d}\mathscr{L}^n=\int_s^t \frac{1}{\mu^{n+1}}\int_{\partial \Omega_\mu^i} \left\langle\tilde{\nabla} {\tilde{r}}^2,\frac{\tilde{\nabla} \varphi_i}{|\tilde{\nabla} \varphi_i|}\right\rangle\,\mathrm{d}\mathscr{H}^{n-1}\mathrm{d}\mu\\
=\ &\int_s^t \frac{1}{\mu^{n+1}}\int_{ \Omega_\mu^i} \Delta {\tilde{r}}^2\,\mathrm{d}\mathscr{L}^{n}\mathrm{d}\mu=2\int_s^t \frac{n\mathscr{L}^{n}\left( \Omega_\mu^i\right)}{\mu^{n+1}} \mathrm{d}\mu.
\end{aligned}
\]

Now letting first $i\to\infty$ then $s\to 0$ we know
\begin{equation}\label{4.47a}
\begin{aligned}
&2\int_{\Omega_t}\frac{\tilde{r}}{(\tilde{r}_+)^{n+1}}\langle\tilde{\nabla} \tilde{r},\tilde{\nabla} \tilde{r}_+\rangle\,\mathrm{d}\mathscr{L}^n=\int_{\Omega_t}\frac{1}{(\tilde{r}_+)^{n+1}}\langle\tilde{\nabla} {\tilde{r}}^2,\tilde{\nabla} \tilde{r}_+\rangle\,\mathrm{d}\mathscr{L}^n=2\int_0^t \frac{n\mathscr{L}^n(\Omega_s)}{s^{n+1}}\,\mathrm{d}s.
\end{aligned}
\end{equation}}

Since $B_{(1-\Psi(t|C))t}(0_n)\subset \Omega_t\subset B_{(1+\Psi(t|C))t}(0_n)$, we have $\mathscr{L}^n(\Omega_t)=\omega_n t^n+o(t^n)$ as $t\downarrow 0$. Therefore by (\ref{faifaehoifjeaijfaeio}), (\ref{feahifhaeoifhiao}) and (\ref{4.47a}), we deduce
\[
\frac{\mathscr{L}^n(\Omega_t)}{t^n}-\omega_n=\int_0^t \left(\frac{\mathscr{H}^{n-1}(\partial \Omega_s)}{s^n}- \frac{n\mathscr{L}^n(\Omega_s)}{s^{n+1}}\right)\,\mathrm{d}s=o(t) \ \ \text{as}\ t\downarrow 0,
\] 
proving (\ref{fehiaofhoaehfoahfoiahof}).

By (\ref{fnaeifhaeiohfeao}) and our volume homogeneity assumption, $\mathscr{H}^n(B_t(x))=\omega_n t^n+o(t^{n+1})$ as $t\downarrow 0$. Combining (\ref{eqn4.22}) and (\ref{fheaohfoahfiahfio}) then shows $\mathscr{L}^n(\mathbf{V}(B_t(x)))=\omega_n t^n+o(t^{n+1})$ as $t\downarrow 0$, which together with (\ref{fehiaofhoaehfoahfoiahof}) proves (\ref{4.41a}).
\ 

\

\textbf{Step 3 Calculation on Euclidean space}

\

By (\ref{aabb4.25}) and (\ref{4.41a}), it suffices to check 
\begin{equation}\label{a4.26}
{}{2\mathscr{L}^n\Big{(}B_t(0_n)\cap \mathbf{V}\big{(}\{u_n\geqslant u_n(x)\}\big{)}\Big{)}-\omega_n t^n}=2\,\omega_n\,  \Delta\, \mathsf{d}_{\bar{x}}(x)(t^{n+1}+o(t^{n+1})).
\end{equation}

For simplicity denote by {}{$f=u_n\circ \mathbf{V}^{-1}$}, $c=f(0_n)$, $A=(a_{ij})$ and $A^{-1}=({a}^{ij})$. We also denote by {}{$\mathrm{g}^{ij}(\tilde{z})=\langle\nabla u_i ,\nabla u_j\rangle\circ\mathbf{V}^{-1}(\tilde{z})$, $G(\tilde{z})=(\mathrm{g}^{ij}(\tilde{z}))$ and $\mathrm{g}_{ij}(\tilde{z})=(G^{-1}(\tilde{z}))_{ij}$ for $\tilde{z}\in B_{2t}(0_n)$}. Due to \cite{HT03}, {}{the limit of $t^{-(n+1)}/(2\omega_n)$ multiples the left-hand side of (\ref{a4.26})  as $t\to 0$} corresponds precisely to the mean curvature of the level set  $\{f=c\}$ at the origin. This mean curvature can be expressed as follows (with all partial derivatives evaluated at the origin).
\begin{equation}\label{e4.29}
\begin{aligned}
\frac{\Delta^{\mathbb{R}^n} f}{\left|\nabla^{\mathbb{R}^n} f\right|}-\frac{\mathrm{Hess}^{\mathbb{R}^n}\mathop{f}\left(\nabla^{\mathbb{R}^n} f,\nabla^{\mathbb{R}^n} f\right)}{\left|\nabla^{\mathbb{R}^n} f\right|^{3}}
=\left(\sum_i {f_i}^2\right)^{-\frac{1}{2}}\sum_i f_{ii}-\left(\sum_i {f_i}^2\right)^{-\frac{3}{2}}\sum_{i,j}f_{ij}f_i f_j.
\end{aligned}
\end{equation}

{}{Define the $(i,j)$-entry of the Jacobi matrix as $J_{ij}(\tilde{z})=\partial v_i/\partial u_j(\mathbf{V}^{-1}(\tilde{z}))$,  and set $J^{ij}(\tilde{z})=(J^{-1}(\tilde{z}))_{ij}$ for every $\tilde{z}\in B_{2t}(0_n)$. By the definition of $\{v_i\}_{i=1}^n$,
\[
J_{ij}(\tilde{z})=a_{ij}+2\sum_{k=1}^n b^{i}_{jk}\big{(}u_k\circ\mathbf{V}^{-1}(\tilde{z})-u_k(x)\big{)}.
\]
Thus 
\begin{align}\label{feahfhaeio}
f_i(0_n)=J^{ni}(0_n)=a^{ni},\ \ \text{for}\ i=1,\ldots,n.
\end{align}
 
 To calculate $f_{ij}(0_n)=\partial J^{ni}/\partial v_j (0_n)$, notice that by chain rule we get 
\[
\begin{aligned}
\frac{\partial }{\partial v_j} J^{ni} =\frac{\partial u_m}{\partial v_j} \frac{\partial }{\partial u_m}  J^{ni} =J^{mj}\frac{\partial }{\partial u_m}  J^{ni}=-J^{mj}\left(J^{n\alpha}J^{\beta i}\frac{\partial }{\partial u_m}  J_{\alpha\beta}\right),
\end{aligned}
\]
which implies
\begin{align}\label{4.50}
f_{ij}(0_n)=-2a^{mj}a^{n\alpha}a^{\beta i}b^\alpha_{\beta m}.
\end{align}}

By (\ref{4.18c}), {}{$a_{ik}\mathrm{g}^{kl}(0_n)a_{jl}=\delta_{ij}$}, which yields 
\begin{align}\label{4.51}
\mathrm{g}^{ij}(0_n)=a^{ik}a^{jk}.
\end{align}
 Therefore $|\tilde{\nabla } f|(0_n)=\sum_i (a^{ni})^2=\mathrm{g}^{nn}(0_n)=1$. 
 
 Recall in \cite{H18}, Han proved that $\Delta v_i$ coincides $\mathscr{H}^n$-a.e. with $\langle \mathrm{Hess} \,v_i ,\mathrm{g}\rangle$. Applying (\ref{eqn4.22}) and H\"{o}lder's inequality then shows
\begin{align}\label{5.51}
0=\lim_{t\rightarrow 0}\fint_{B_t(x)}\Delta v_i\mathop{\mathrm{d}\mathscr{H}^n}= a_{ij}\Delta u_j(x)+2 \mathrm{g}^{jk}(0_n)b^{i}_{jk}, \ \text{for}\  i=1,\ldots,n.
\end{align}

Since the matrix $(\mathrm{g}^{ij})$ is symmetric, from (\ref{4.50}), (\ref{4.51}) and (\ref{5.51}) we see
\[
\sum_{i} f_{ii}(0_n)=-2\sum_{i,\alpha,\beta,m} a^{mi}a^{n\alpha}a^{\beta i}b^\alpha_{\beta m}=-2\sum_{\alpha,\beta,m}\mathrm{g}^{m\beta}(0_n)a^{n\alpha}b^\alpha_{\beta m}=\Delta u_n(x).
\]
Therefore we conclude by computing the Hessian term in (\ref{e4.29}) as follows. \[
\begin{aligned}
&\sum_{i,j} f_{ij}(0_n)f_i(0_n) f_j(0_n)=-2\sum_{i,j,m,\alpha,\beta}\left(a^{m j}a^{n\alpha}a^{\beta i}b^\alpha_{\beta m}\right)(a^{ni}a^{nj})\ (\text{by} \ (\ref{feahfhaeio}), (\ref{4.50}))\\
&=-2\sum_{m,\alpha,\beta} a^{n\alpha}\mathrm{g}^{mn}(0_n)\mathrm{g}^{n\beta}(0_n)b^\alpha_{\beta m}\  (\text{by}\  (\ref{4.51}))\\
&=\sum_{\alpha,l}a^{n\alpha}\left(a_{\alpha l}\mathrm{Hess}\,u_l(\nabla u_n,\nabla u_n)(x)\right)\ (\text{by} \ (\ref{4.18c}))\\
&=\mathrm{Hess}\,u_n(\nabla u_n,\nabla u_n)(x)=0.
\end{aligned} 
\]
\end{proof}

{}{\begin{remark}
By combining (\ref{eqn4.6cccc}), (\ref{deltadistance}), (\ref{aaa4.16}), Cor. \ref{step3}, Prop. \ref{step5} and our volume homogeneity assumption, we deduce the existence of a function $\tau\in L^\infty_{\mathrm{loc}}((0,D))$ with the following properties.
\begin{itemize}
\item For any $r\in (0,D)$, $\tau(r)$ has the following two-sided bound:
\begin{align}\label{fehaiohfeoi}
-\frac{ C(K,n,D)}{D-r}\leqslant -\frac{V_{K,n}'(D-r)}{V_{K,n}(D-r)}\leqslant \frac{\tau(r)}{n-1}\leqslant \frac{V_{K,n}'(r)}{V_{K,n}(r)}\leqslant\frac{C(K,n,D)}{r}.
\end{align}
\item For any $x_0\in X$, by letting $Q_{x_0}=\partial B_{D/3}(x_0)$, there exists a bijection from $B_{D}(x_0)\setminus \{x_0\}$ to $Q_{x_0}\times (0,D)$ given by $x\mapsto (\gamma_{x_0,x}(D/3),\mathsf{d}(x,x_0))$. Moreover, the disintegration of measure with respect to $\mathsf{d}_{x_0}$ takes the form
\[
\mathrm{d}\mathscr{H}^n=\sigma_{x_0}(\beta,r)\,\mathrm{d}r\mathrm{d}\mathfrak{q}_{x_0}(\beta),
\]
where the density function $\sigma_{x_0}$ satisfies the multiplicative relation:
\begin{equation}\label{4.55}
\sigma_{x_0}(\beta,r_2)=\sigma_{x_0}(\beta,r_1)\exp\left(\int_{r_1}^{r_2}\tau(r)\,\mathrm{d}r\right),\ \forall \, r_1,r_2\in (0,D),\forall \beta \in Q_{x_0}.
\end{equation}
\end{itemize}
\end{remark}}
Indeed, this reveals the radial dependence of the measure.
\begin{prop}\label{prop4.23}
{}{There exists a locally Lipschitz function $\omega:(0,D)\to (0,\infty)$ with the following property: for any $x_0\in X$, under the disintegration with respect to $\mathsf{d}_{x_0}$, for any $r\in (0,D)$ it holds $\sigma_{x_0}(\cdot,r)=\omega(r)$. Consequently, Rmk. \ref{rmk4.16} implies the $(n-1)$-dimensional Hausdorff measure on geodesic spheres can be expressed as
\begin{align}\label{4.57}
\mathrm{d}\mathscr{H}^{n-1}\llcorner_{\partial B_r(x_0)}=\omega(r)\,\mathrm{d}\mathfrak{q}_{x_0},
\end{align}
where $\mathfrak{q}_{x_0}$ is the pushforward measure defined in Cor. \ref{step3}.}
\end{prop}
\begin{proof}
According to (\ref{4.55}), it suffices to show $\sigma(\cdot,D/3)$ is a constant function on $Q_{x_0}$. For $x\in \partial B_{D/3}(x_0)$, let $\underline{B}_r(x):=\partial B_{D/3}(x_0)\cap B_r(x)$. By Prop. \ref{step4}, when $r\in (0,r_0(K,n,D))$, there exists a bi-Lipschitz coordinate function $\mathbf{U}=(u_1,\ldots,u_n):B_r(x)\to\mathbb{R}^n$ such that $u_n=\mathsf{d}_{x_0}-D/3$. Because the bi-Lipschitz constant is $1+\Psi(r|K,n,D)$, we have
\[
\begin{aligned}
&\mathscr{H}^{n-1}(\underline{B}_r(x))\leqslant (1+\Psi)^n\mathscr{H}^{n-1}\Big{(}\mathbf{U}\big{(}\underline{B}_r(x)\big{)}\Big{)}\\
\leqslant\ &\mathscr{H}^{n-1}\Big{(}\mathbf{U}(\{u_n=0\})\cap B_{(1+\Psi)r}(0_n)\Big{)}(1+\Psi)^n=\omega_{n-1}r^{n-1}(1+\Psi)^{2n},
\end{aligned}
\]
and similarly, 
\[
\mathscr{H}^{n-1}(\underline{B}_r(x))\geqslant \omega_{n-1}r^{n-1}(1-\Psi)^{2n}.
\]
Thus, for $r<r_0/3$ we obtain
\begin{align}\label{ttkk111}
\sup_{x\in \partial B_{D/3}(x_0)}\frac{\mathscr{H}^{n-1}(\underline{B}_{2r}(x))}{\mathscr{H}^{n-1}(\underline{B}_{r}(x))}\leqslant C(K,n,D).
\end{align}

Recall from Sec. \ref{sec4.1} and Cor. \ref{step3} that $\mathfrak{Q}_{x_0}$ is the quotient map from $B_{D}(x_0)\setminus \{x_0\}$ to $Q_{x_0}=\partial B_{D/3}(x_0)$ defined by $x\mapsto \gamma_{x_0,x}(D/3)$, and $\mathfrak{q}_{x_0}$ is the push forward measure $(\mathfrak{Q}_{x_0})_\sharp ((\mathscr{H}^n(X))^{-1}\mathscr{H}^n)$. Thus $\mathfrak{Q}_{x_0}^{-1}(\underline{B}_r(p))=\underline{B}_r(p)\times (0,D)$, which implies
\[
\left(\mathscr{H}^n(X)\right)\mathfrak{q}_{x_0}(\underline{B}_r(p))=\mathscr{H}^n\left(\mathfrak{Q}_{x_0}^{-1}(\underline{B}_r(p))\right)=  \int_{\underline{B}_r(p)} \int_0^D\sigma_{x_0}(\beta,t)\,\mathrm{d}t\mathrm{d}\mathfrak{q}_{x_0}(\beta).
\]

From (\ref{fehaiohfeoi}), (\ref{4.55}) and Rmk. \ref{rmk4.16} we know
\[
\begin{aligned}
&\mathscr{H}^n\left(\mathfrak{Q}_{x_0}^{-1}(\underline{B}_r(p))\right)=\int_{\underline{B}_r(p)} \sigma_{x_0}(\beta,D/3)\,\mathrm{d}\mathfrak{q}_{x_0}(\beta)\int_0^D \exp\left(\int_{D/3}^t\tau(s)\,\mathrm{d}s\right)\,\mathrm{d}t\\
=\ &\mathscr{H}^{n-1}(\underline{B}_r(p))\int_0^D \exp\left(\int^t_{D/3}\tau(s)\,\mathrm{d}s\right)\,\mathrm{d}t.
\end{aligned}
\]Combining this with (\ref{ttkk111}), we conclude that the metric measure space $(Q_{x_0},\mathsf{d},\mathfrak{q}_{x_0})$ satisfy the local doubling property. Additionally, for any $\beta\in Q_{x_0}$ and $r\in (0,r_0/3)$ it holds
\[
\fint_{\underline{B}_r(p)}\sigma_{x_0}(\beta,D/3)\,\mathrm{d}\mathfrak{q}_{x_0}(\beta)=\mathscr{H}^n(X) \left(\int_0^D \exp\left(\int^t_{D/3}\tau(s)\,\mathrm{d}s\right)\,\mathrm{d}t\right)^{-1}.
\]
Finally, the conclusion follows from the Lebesgue differentiation theorem.

\end{proof}
\begin{proof}[Proof of the second statement of Thm. \ref{thm5.1}: \textbf{smoothness}]\ 

For every point $x\in X$, define the operators
\[
\begin{aligned}
{}{A_x}:\mathrm{Lip}(X,\mathsf{d})&\longrightarrow C\big{(}(0,D)\big{)}\\
f&\longmapsto \left(r\mapsto\fint_{\partial B_r(x)}f\,\mathrm{d}\mathscr{H}^{n-1}\right),
\end{aligned}
\]
and
\[
\begin{aligned}
{}{R_x}:\mathrm{Lip}\big{(}(0,D)\big{)}&\longrightarrow \mathrm{Lip}(X,\mathsf{d})\\
f&\longmapsto \left(y\mapsto f(\mathsf{d}(x,y))\right).
\end{aligned}
\]

For $f\in \mathrm{Test}F(X,\mathsf{d},\mathscr{H}^n)$ and $x\in X$, the disintegration with respect to $\mathsf{d}_x$ allows us to identify $y\in B_{D}(x)\setminus \{x\}$ as $(\gamma_{x,y}(D/3),\mathsf{d}(x,y))$. Since $\mathfrak{q}_x(Q_{x})=1$, applying Prop. \ref{prop4.23} yields
\[
 R_x A_x  f(y)= \fint_{{}{\partial }B_{\mathsf{d}(x,y)}(x)}  f \mathop{\mathrm{d}\mathscr{H}^{n-1}} = \int_{Q_x} f(\cdot,\mathsf{d}(x,y))\,\mathrm{d}\mathfrak{q}_x, \ \forall y\in B_D(x)\setminus \{x\}.
\]
This implies $R_x A_x f\in \mathrm{Lip}(X,\mathsf{d})$ with $\mathrm{Lip}(R_xA_x f)\leqslant \mathrm{Lip}(f)$. {}{Observe that for any $\zeta\in \mathrm{Lip}_c((0,D))$ we have
\[
\begin{aligned}
&\int_0^D \zeta' A_xf\,\mathrm{d}r=\int_0^D \int_{Q_x}\zeta'f\,\mathrm{d}\mathfrak{q}_x\mathrm{d}r\ (\text{by Prop.}\ \ref{prop4.23})\\
=\ &-\int_{Q_x}\int_0^D\zeta\frac{\partial}{\partial r}f\,\mathrm{d}r\mathrm{d}\mathfrak{q}_x\ (\text{Fubini's Thm. and integral by parts})\\
=\ &-\int_0^D \zeta \int_{Q_x}\frac{\partial}{\partial r}f\,\mathrm{d}\mathfrak{q}_x\mathrm{d}r \ (\text{Fubini's Thm.}).
\end{aligned}
\]

If in addition $\zeta\in C_c^2((0,D))$, then we obtain\[
\begin{aligned}
&-\int_0^D \zeta'\int_{\partial B_r(x)}\langle\nabla f,\nabla \mathsf{d}_x\rangle\,\mathrm{d}\mathscr{H}^{n-1}\,\mathrm{d}r=-\int_0^D \int_{\partial B_r(x)}\langle\nabla f,\nabla \zeta\circ\mathsf{d}_x\rangle\,\mathrm{d}\mathscr{H}^{n-1}\,\mathrm{d}r\\
&=-\int_X\langle\nabla f,\nabla \zeta\circ\mathsf{d}_x\rangle\,\mathrm{d}\mathscr{H}^{n} \ (\text{co-area formula})=\int_X  f\Delta (\zeta\circ\mathsf{d}_x)\,\mathrm{d}\mathscr{H}^{n}\\
&=\int_X  f\big{(}\zeta'\circ\mathsf{d}_x\Delta \mathsf{d}_x+\zeta''\circ\mathsf{d}_x\big{)}\,\mathrm{d}\mathscr{H}^{n}\  (\text{by \cite[Prop. 5.2.3]{G18a}})\\
&=\int_{Q_x}\int_0^D f\left(\zeta'\frac{\partial \log\sigma_{x}}{\partial r}+\xi''\right)\sigma_x\,\mathrm{d}r\mathrm{d}\mathfrak{q}_x\  (\text{by}\ (\ref{deltadistance}))\\
&=\int_{Q_x}\int_0^D f\frac{\partial (\zeta' \sigma_{x})}{\partial r}\,\mathrm{d}r\mathrm{d}\mathfrak{q}_x=-\int_{Q_x}\int_0^D \frac{\partial f}{\partial r}\zeta'\sigma_x\,\mathrm{d}r\mathrm{d}\mathfrak{q}_x\  (\text{integral by parts})\\
&=-\int_0^D\zeta'\omega\int_{Q_x} \frac{\partial f}{\partial r}\,\mathrm{d}r\mathrm{d}\mathfrak{q}_x\   (\text{Prop. \ref{prop4.23} and Fubini's Thm.}).
\end{aligned}
\]
Thus we obtain for $\mathscr{L}^1$-a.e. $r\in (0,D)$, 
\begin{align}\label{a4.62}
\omega(r)(A_x f)'(r)= \omega(r)\int_{Q_x}\frac{\partial f}{\partial r} (\cdot,r)\,\mathrm{d}\mathfrak{q}_x=\int_{\partial B_r(x)}\langle\nabla f,\nabla \mathsf{d}_x\rangle\,\mathrm{d}\mathscr{H}^{n-1}.
\end{align}}

{}{From Rmk. \ref{GaussGreen} we know
\begin{align}\label{a4.63}
\int_{\partial B_r(x)}\langle\nabla f,\nabla \mathsf{d}_x\rangle\,\mathrm{d}\mathscr{H}^{n-1}=\int_{B_r(x)}\Delta f\,\mathrm{d}\mathscr{H}^n,\ \mathscr{L}^1\text{-a.e.}\ \  r\in (0,D),
\end{align}
which together with co-area formula also yields for $\mathscr{L}^1$-a.e. $r\in (0,D)$,
\begin{align}\label{fe4.64}
\frac{\partial}{\partial r}\left(\int_{B_r(x)} \Delta f\mathop{\mathrm{d}\mathscr{H}^{n}}\right)=\int_{\partial B_r(x)} \Delta f\,\mathrm{d}\mathscr{H}^{n-1}.
\end{align}}

Therefore, given any $\varphi\in \mathrm{Test}F(X,\mathsf{d},\mathscr{H}^n)$, we compute\[
\begin{aligned}
&\int_X \Delta \varphi \, R_x A_x  f\mathop{\mathrm{d}\mathscr{H}^{n}}=\int_X \langle\nabla \varphi,\nabla \mathsf{d}_x\rangle (A_x  f)'(\mathsf{d}_x)\mathop{\mathrm{d}\mathscr{H}^{n}}\\
=&\int_0^D  (A_x  f)'(r)\int_{\partial B_r(x)}\langle\nabla \varphi,\nabla \mathsf{d}_x\rangle\,\mathrm{d}\mathscr{H}^{n-1}\mathrm{d}r\ (\text{co-area formula})\\
=&\int_0^D \omega(r)\left[\int_{Q_x}\frac{\partial}{\partial r} f(\cdot,r)\mathop{\mathrm{d}\mathfrak{q}_x} \int_{Q_x}\frac{\partial}{\partial r}\varphi(\cdot,r)\mathop{\mathrm{d}\mathfrak{q}_x}\right]\mathop{\mathrm{d}r}\ (\text{by}\  (\ref{a4.62}))\\
=&\int_0^D \left[\int_{\partial B_r(x)}\langle\nabla f,\nabla \mathsf{d}_x\rangle\mathop{\mathrm{d}\mathscr{H}^{n-1}} \frac{\partial}{\partial r}\left(\int_{Q_x}\varphi(\cdot,r)\mathop{\mathrm{d}\mathfrak{q}_x}\right)\right]\mathop{\mathrm{d}r}\ (\text{by}\  (\ref{a4.62}))\\
=&\int_0^D \left[\int_{B_r(x)} \Delta f\mathop{\mathrm{d}\mathscr{H}^{n}} \frac{\partial}{\partial r}\left(\int_{Q_x}\varphi(\cdot,r)\mathop{\mathrm{d}\mathfrak{q}_x}\right)\right]\mathop{\mathrm{d}r}\ (\text{by}\  (\ref{a4.63}))\\
=&-\int_0^D  \left[\int_{\partial B_r(x)} \Delta f\mathop{\mathrm{d}\mathscr{H}^{n-1}}\int_{Q_x}\varphi(\cdot,r)\mathop{\mathrm{d}\mathfrak{q}_x}\right]\mathop{\mathrm{d}r} \ \text{((\ref{fe4.64}) and integral by parts})\\
=&\int_X  \varphi\, R_x A_x  \Delta f\mathop{\mathrm{d}\mathscr{H}^{n}}\ (\text{Prop. \ref{prop4.23} and co-area formula}).
\end{aligned}
\] 
Since $\mathrm{Test}F(X,\mathsf{d},\mathscr{H}^n)$ is dense in $D(\Delta)$ with respect to the norm $\|\cdot\|_{H^{1,2}}+\|\Delta(\cdot)\|_{L^2}$, we deduce
\begin{align}\label{feahjioeah}
R_x A_x f\in D(\Delta) \ \ \text{and}\ \  \Delta R_x A_x f=R_x A_x \Delta f. 
\end{align}

Define $\tilde{\rho}:(x,y,t)\mapsto R_x A_x \rho(x,\cdot,t)(y)$ and $\tilde{\mathrm{h}}_t f:x\mapsto \int_X \tilde{\rho}(x,y,t)f(y)\mathop{\mathrm{d}\mathscr{H}^n(y)}$ for $f\in \mathrm{Lip}(X,\mathsf{d})$. Then (\ref{feahjioeah}) implies

\begin{equation}\label{4.34}
\frac{\partial\tilde{\rho}}{\partial t} (x,\cdot,t)=R_x A_x\left(\frac{\partial \rho}{\partial t}(x,\cdot,t)\right)=R_x A_x \Delta \rho(x,\cdot,t)=\Delta \tilde{\rho}(x,\cdot,t).
\end{equation}

To prove
\begin{equation}\label{4.32}
\left\| \tilde{\mathrm{h}}_t f-f\right\|_{L^2}\rightarrow 0\ \ \text{ as $t\rightarrow 0$},
\end{equation}
we first show
\begin{equation}\label{4.31}
 \left\|\int_X \varphi \, \tilde{\mathrm{h}}_t f \mathop{\mathrm{d}\mathscr{H}^n}-\int_X \varphi  f \mathop{\mathrm{d}\mathscr{H}^n}\right\|_{L^2}\rightarrow 0\ \ \text{as }\ t\rightarrow 0,\  \forall \varphi\in L^2.
\end{equation}

{}{Due to the stochastic completeness of heat kernel established in \cite{St94}, $\{\mathrm{h}_t\}_{t>0}$ acts on $L^\infty$ as a linear family of contraction, namely for any $\phi \in L^\infty$ it holds $\sup_{t>0}\|\mathrm{h}_t \phi\|_{L^\infty} \leq \|\phi\|_{L^\infty}$. }Thus for any $x\in X$, by Bakry-\'Emery estimate \cite[Thm. 6.1]{AGS14a}, \[
{\left\|\nabla \mathrm{h}_t \left(R_x A_x f\right)\right\|_{L^\infty}}^2\leqslant \exp(-2Kt)\left\| \mathrm{h}_t\left({|\nabla R_x A_x f|}^2\right)\right\|_{L^\infty}\leqslant \exp(-2Kt)\,\mathrm{Lip}\,f,
\] 
ensuring the uniform convergence $\mathrm{h}_t (R_x A_x f)\rightarrow R_x A_x f$ as $t\to 0$. In particular, we have 
\begin{align}\label{4.68}
(\mathrm{h}_t (R_x A_x f))(y)\rightarrow (R_x A_x f)(x)=\lim_{z\rightarrow x} (R_x A_x f)(z)=f(x),\ \text{as}\ t\to 0.
\end{align}

By applying Fubini's Thm. and the co-area formula, we obtain
\[
\begin{aligned}
&\int_X \varphi \, \tilde{\mathrm{h}}_t f \mathop{\mathrm{d}\mathscr{H}^n}=\int_X \varphi(x)\int_X  R_x A_x\rho(x,\cdot,t)(y)f(y) \mathop{\mathrm{d}\mathscr{H}^n(y)}\mathop{\mathrm{d}\mathscr{H}^n(x)}\\
=\ &\int_X \varphi(x)\int_0^D\int_{\partial B_r(x)}\left(\fint_{\partial B_r(x)} \rho(x,z,t)\mathop{\mathrm{d}\mathscr{H}^{n-1}(z)}\right)f(y) \mathop{\mathrm{d}\mathscr{H}^{n-1}(y)}\mathop{\mathrm{d}\mathscr{H}^n(x)}\\
=\ &\int_X  \varphi(x) \int_0^D\int_{\partial B_r(x)} \rho(x,z,t)(R_x A_x f)(z)\mathop{\mathrm{d}\mathscr{H}^{n-1}(z)}\mathop{\mathrm{d}\mathscr{H}^n(x)}\\
=\ &\int_X \varphi(x) (\mathrm{h}_t (R_x A_x f))(x)\mathop{\mathrm{d}\mathscr{H}^n(x)}.
\end{aligned}
\]
Then (\ref{4.31}) follows from these calculations, together with the observation \[
\sup_{t>0}\int_X (\mathrm{h}_t (R_x A_x f))^2(x)\mathop{\mathrm{d}\mathscr{H}^n(x)}\leqslant \mathscr{H}^n(X)(\sup_X f)^2<\infty,
\] and the application of (\ref{4.68}) and the dominated convergence theorem. As a result, (\ref{4.32}) is derived by combining (\ref{4.31}) with the following limit.
\[
\begin{aligned}
&\int_X (\tilde{\mathrm{h}}_t f)^2 \mathop{\mathrm{d}\mathscr{H}^n}=\int_X \tilde{\mathrm{h}}_t f(x) (\mathrm{h}_t (R_x A_x f))(x)\mathop{\mathrm{d}\mathscr{H}^n(x)}\\
=\ &\int_X  (\mathrm{h}_t (R_x A_x f))^2(x)\mathop{\mathrm{d}\mathscr{H}^n(x)}\rightarrow \int_X f^2\mathop{\mathrm{d}\mathscr{H}^n},\ \ \text{as}\ t\rightarrow 0.
\end{aligned}
\]

From (\ref{4.34}), (\ref{4.32}) and the uniqueness of the solution to the heat equation, we know for any $s,t>0$, since $\mathrm{h}_s f$ is a test function, it follows $\tilde{\mathrm{h}}_t\mathrm{h}_s f=\mathrm{h}_{t+s}f$. By letting $s\rightarrow 0$ and using the dominated convergence theorem again, we obtain $\tilde{\mathrm{h}}_t f=\mathrm{h}_{t}f$. Finally, since the set of test functions is dense in $H^{1,2}$ (and hence in $L^2$), we conclude that
\[
\rho(x,y,t)=\tilde{\rho}(x,y,t),\ \forall x,y\in X,\ \forall t>0.
\] 
This verifies the strong harmonicity of $(X,\mathsf{d},\mathscr{H}^n)$, which, by Cor. \ref{cor1.5}, implies the smoothness.

\end{proof}
\section{Appendix: eigenvalue estimates for symmetric matrices}
Let $A=(a_{ij})$ be an $n\times n$ real symmetric matrix and $\lambda_{\max}$, $\lambda_{\min}$ be its maximum and minimum eigenvalues respectively.

\begin{prop}\label{prop5.1}
	For every $t\in\mathbb{R}$, it holds that 
	\[
	\lambda_{\max}\leqslant t+\sum_{i,j}|a_{ij}-t\delta_{ij}|.
	\]
\end{prop}
\begin{proof}
	Define a new matrix $B=(b_{ij})$ by $b_{ij}=a_{ij}-t\delta_{ij}$. Then $B$ is also symmetric. Let $\mu_{\max}$ be the maximum eigenvalue of $B$. Then it is obvious that $\lambda_{\max}=\mu_{\max}+t$.
	
	Since the maximum eigenvalue satisfies $\mu_{\max}=\max_{\|x\|=1}xBx^T$, for any unit vector $x=(x_1,\ldots,x_n)$ we have
	\[
	|xBx^T|=\left|\sum_{i,j}b_{ij}x_ix_j\right|\leqslant\sum_{i,j}|b_{ij}||x_ix_j|\leqslant\sum_{i,j}|b_{ij}|\frac{{x_i}^2+{x_j}^2}{2}\leqslant\sum_{i,j}|b_{ij}|.
	\]
	
Thus we conclude by the following inequality.
	\[
	\lambda_{\max}-t=\mu_{\max}\leqslant|\mu_{\max}|\leqslant\sum_{i,j}|b_{ij}|=\sum_{i,j}|a_{ij}-t\delta_{ij}|.
	\]
\end{proof}
\begin{cor}\label{cor5.2}
	Assume $A$ is positive semi-definite. The following estimates for $\lambda_{\mathrm{max}}$ and $\lambda_{\mathrm{min}}$ holds.
\begin{align}\label{5.1}
\frac{\sum_i a_{ii}}{n}\leqslant\lambda_{\mathrm{max}}\leqslant\inf_{t\in\mathbb{R}}\left(t+\sum_{i,j}|a_{ij}-t|\right),
\end{align}
\begin{align}\label{5.2}
	\mathrm{det}(A)(\lambda_{\mathrm{max}})^{1-n}\leqslant \lambda_{\min}\leqslant \frac{1}{n}\sum_{i=1}^n {a_{ii}}.
\end{align}

\end{cor}
\begin{proof}
	Let $0<\lambda_1=\lambda_{\min}\leqslant \lambda_2\leqslant\cdots\leqslant\lambda_{\max}=\lambda_n$ be all the eigenvalues of $A$. The first inequality in (\ref{5.1}) follows from the fact that 
	\[
	\sum_{i=1}^n a_{ii}=\sum_{i=1}^n\lambda_i\leqslant n\lambda_{\mathrm{max}}.
	\]
	
	As for the first inequality in (\ref{5.2}), it suffices to observe that
	\[
	\mathrm{det}(A)=\prod_{i=1}^n \lambda_i\leqslant \lambda_{\mathrm{min}}(\lambda_{\mathrm{max}})^{n-1}.
	\]
\end{proof}
\bibliographystyle{alpha}
\bibliography{reff}

\bigskip
\end{document}